\newtheorem{prop}{Proposition}
\newtheorem{theorem}{Theorem}
\newtheorem{lemma}{Lemma}
\newtheorem{conjecture}{Conjecture}
\newtheorem{rem}{Remark}
\newtheorem{cor}{Corollary}
\newtheorem{defi}{Definition}
\begin{document}

	\AtEndDocument{%
		\par
		\medskip
		\begin{tabular}{@{}l@{}}%
			\textsc{Karlsruhe Institute of Technology, Karlsruhe, Germany.}\\
			\textit{E-mail address}: \texttt{airi.takeuchi@partner.kit.edu}
	\end{tabular}
\vspace{1cm}

\begin{tabular}{@{}l@{}}%
	\textsc{University of Augsburg, Augsburg, Germany.}\\
	\textit{E-mail address}: \texttt{lei.zhao@math.uni-augsburg.de}
\end{tabular}}

\title{Conformal Transformations and Integrable Mechanical Billiards}
\author{Airi Takeuchi, Lei Zhao}



\date{}
\maketitle

\begin{abstract}
{In this article we explain that several integrable mechanical billiards in the plane are connected via conformal transformations. We first remark that the free billiard in the plane are conformal equivalent to infinitely many billiard systems defined in central force problems on a particular fixed energy level. We then explain that the classical Hooke-Kepler correspondence can be carried over to a correspondence between integrable Hooke-Kepler billiards. As part of the conclusion we show that any focused conic section gives rise to integrable Kepler billiards, which brings generalizations to a previous work of Gallavotti-Jauslin \cite{Gallavotti-Jauslin}. {We discuss several generalizations of integrable Stark billiards. }We also show that any confocal conic sections give rise to integrable billiard systems of Euler's two-center problems.}
\end{abstract}

\tableofcontents

\begin{quote}
		\subsection{{General Setting of Mechanical Billiard Systems}}
        The dynamics of billiards in the plane in which a particle moves freely along straight lines in a "billiard table" and reflects elastically at a reflection wall is a widely-studied subject. 
        In this paper we study a type of variants of such systems, namely planar mechanical billiards, in which the particle is assumed to move under the additional influence of a conservative force field derived from a {potential}. 
	
	Our general setting is the following: We consider a mechanical system on two-dimensional Riemanian manifold $(M, g)$ with a {force function} $U: M \to \mathbb{R}$. {The potential is  $V=-U$.}
	The dynamics is given by the corresponding second-order Newton's equation
	$$\nabla_{\dot{q}}\dot{q}=  \nabla_{g} U(q), \, q \in M,$$
	in which $\nabla$ is the Levi-Civita connection of $g$.
	Moreover, we assume that the motion is elastically reflected against a $C^1$-smooth curve $\mathcal{B}\subset M$. This then defines a billiard system when we specify (when necessary) a component of $M \setminus \mathcal{B}$ as a billiard table where the motions of interest take place. We shall not need this specification for the purpose of this article. We thus define the corresponding mechanical billiard system as the quadruplet $(M,g,U,\mathcal{B})$.

	
	Note that as compared to the case of free billiards, it is not always necessary to assume that the billiard table is bounded in order for the billiard mapping to be well-defined, {for example when the force forces the trajectories to meet the reflection wall $\mathcal{B}$ again}. Moreover, in such cases we may as well remove part of $\mathcal{B}$ which may possibly lead to a still well-defined, albeit discontinuous billiard mapping.

	A first integral of the system $(M,g,U,\mathcal{B})$ is a first integral of $(M, g, U)$ which is invariant under the reflections at $\mathcal{B}$. The energy ${E = T {-} U}$ is always a first integral of the system. As we are in dimension 2, such a system is called integrable if there exists another first-integral of this system independent of $E$. 
	
	Due to the conservation of energy $E$, we can moreover restricted the mechanical billiard system to an energy hypersurface $\{E = e\}$. We denoted the corresponding billiard system by $(M,g, U, \mathcal{B}, e)$. Accordingly, this restricted system is called integrable if there exists an additional non-trivial first integral of the system defined on $\{E=e\}$. In this article, we primary use this definition of integrability since it is natural to fix its energy when we consider a billiard system. 
	
	The free motion case ($U = 0$) corresponds to the classical free billiards.  In this case, any of its positive energy hypersurfaces carry the same dynamics. In contrast to this, a general mechanical systems can have essentially different behaviors on different energy surfaces and analogously also the mechanical billiard systems. Therefore it is often necessary to specify the energy values $e$ or the subset of possible energy values $\mathcal{E}$ under consideration. We write $(M,g,U, \mathcal{B}, \mathcal{E})$ to emphasize also the region of energy under consideration. Such a system is integrable if the system is integrable for all $e \in \mathcal{E}$. On the other hand, a ``reflection wall'' $\mathcal{B}$ such that $(M,g,U, \mathcal{B}, \mathcal{E})$ is integrable, is called an integrable reflection wall for the mechanical system $(M,g,U, \mathcal{B}, \mathcal{E})$. Note that for the discussion of integrability, we do not require that the billiard mapping to be always well-defined. 

	Already in the free billiard case with no additional force, billiard systems may carry rich dynamics and offers class of examples illustrating many dynamical phenomena \cite{Tabachinkov}. The book \cite{Kozlov-Treshchev} also discusses several aspects of mechanical billiards. 
	
	\subsection{ {Known Examples of Integrable Mechanical Billiard Systems}}
	{For free motion in 2-dimensional plane $\mathbb{R}^2$, there are two types of integrable billiard systems. The simplest one is the one with a circular reflection wall. In this case, one can easily see that the angle of reflection is preserved, hence it is an additional first integral. The second example is provided with an elliptic reflection wall. The integrability of such a system has been shown by Birkhoff \cite{Birkhoff}. This integrability can be generalized in the case of free motions in 2-dimensional sphere $\mathbb{S}^{2}$ and the hyperbolic space $\mathbb{H}^2$, in which circular and elliptic reflection walls are also integrable \cite{Veselov-Alexander}\cite{Tabachnikov3}. Additionally, a conjecture attributed to Birkhoff and Poritsky states that any closed convex reflection wall of an integrable billiard system is either a circle or an ellipse \cite{Poritsky}. This conjecture has not been fully proven yet, but there are important progresses recently made \cite{Kaloshin}. Also, an algebraic version of the conjecture for billiards on the plane and constant curvature surfaces has recently been proved by Glutsyuk \cite{Glutsyuk_2017}\cite{Glutsyuk_2020}. 	}
	
	Many examples of integrable mechanical billiard systems with the presence of a non-constant {potential} function have been identified as well.  We start our list with a class of relatively easy examples: In a central force problem in $\mathbb{R}^{2}$, in which $V$ is a function of $|q|$ only, then circles with center at $O$ and lines passing through the center {$O$} are integrable reflection walls: In both cases, it is direct to check that the norm of the angular momentum is preserved under reflections at these reflection walls. The very same argument works also on the sphere $\mathbb{S}^{2}$, and on the hyperbolic plane $\mathbb{H}^{2}$.  
	
	 {A number of integrable mechanical billiards are defined for the Kepler problem and the Hooke problem, with respectively force functions of the forms $U = \frac{s}{r}$ and $U=f r^{2}$, where $r$ is the distance of the particle from a fixed center $O \in \mathbb{R}^{2}$ and the factors $f, s \in \mathbb{R}$ can take both signs, allowing both attractive and repulsive forces. }
	 
	  {In the Hooke problem}, it is direct to see that any line is integrable. Centered conic sections are also integrable, for which the case of an centered ellipse follows from the classical work of Jacobi on the integrability of a quadratic radial potential of the form $r^{2}$ restricted to a triaxis ellipsoid in $\mathbb{R}^{3}$, by letting one of the axis of the ellipsoid tends to zero \cite{Jacobi Vorlesung}{\cite{Fedorov}}. Additionally, the integrability of two centered confocal elliptic reflection walls is shown by Pustovoitov in \cite{Pustovoitov2019}. Later, by the same author, the integrability of reflection walls consist of centered confocal ellipses and centered confocal hyperbola is also established \cite{Pustovoitov2021}. {In addition, the centered elliptic reflection walls are integrable for certain potentials given by certain polynomials of even degrees in $\mathbb{R}^2$ \cite{Kozlov-Treshchev}\cite{Wojciechowski}.}
	
	The Kepler problem in $\mathbb{R}^2$ with a line not passing through the attractive center is contained in a class of mechanical billiard systems proposed by Boltzmann in \cite{Boltzmann}, who expected that such systems {to be} ergodic and in particular non-integrable. Based on a close examination of Boltzmann's argument and some numerical investigations, Gallavotti has conjectured that the contrary is actually true, namely this mechanical billiard system {should actually be integrable.} This has been confirmed by Gallavotti and Jauslin in \cite{Gallavotti-Jauslin}, with alternative proofs in \cite{Felder} and \cite{Zhao}. Moreover, such systems can be generalized to $\mathbb{S}^{2}$ and  $\mathbb{H}^{2}$ \cite{Zhao}.
	
	It has been also known that a parabolic reflection wall whose focus is at
	the origin is integrable for the {Stark} problem in which the {potential is a linear combination
	of a Kepler and a uniform gravitational potential $U = g y$ with constant $g \in \mathbb{R}$} \cite{Korsch-Lang}. This result has its significance in optics, and such a parabolic mirror has been constructed in experiments \cite{Feldt-Olafsen}.
	In Section \ref{sec: duality_Hook_Kepler}, we shall give a short alternative proof of the theorem of  \cite{Korsch-Lang} as well as bring certain extensions.

	{More recently, for the planar system with potential $U := \dfrac{k}{2}(x^2 + y^2) + \dfrac{\alpha^2}{2 x^2} + \dfrac{\beta^2}{2 y^2}$, Kobtsev showed that any centered ellipse with semi-axis $a,b$ forms an integrable reflection wall \cite{Kobtsev}.}
	
	{The integrable dynamics of some of these integrable mechanical billiards have been extensively investigated as well. For this we refer to \cite{Fomenko-Vedyushkina} and the references therein.}
	
	\subsection{{Purpose of this Article}}
These examples of integrable mechanical billiards have been found independently under different contexts. In this {article, our main goal is to illustrate how conformal transformations transform integrable mechanical billiard systems. }

{As application}, we shall start by showing that via conformal transformations one gets from integrable free billiards in the plane some classes of planar immersed curves which are integrable reflection walls for certain central force problem in the plane on its zero-energy level. The complexity of these curves makes us wonder whether this simple corollary admit different but as simple solutions, if we first fix the potential and ask to identify these integrable reflection walls.

We shall then apply the well-known complex square mapping,
{\cite{Maclaurin}\cite{Goursat}\cite{Levi-Civita}\cite{Levi-Civita_1904}} 
which induces a duality between the Hooke and the Kepler problems, to obtain a duality between integrable Hooke and Kepler billiards. {The same transformation also leads to many new classes of integrable mechanical billiards in systems similar to the Stark problem.} We shall also apply a closely-related conformal mapping due to Birkhoff to Euler's two-center problem and identify its integrable reflection walls.

{In this way many known examples of integrable mechanical billiards are related.} Besides, we have also identified some classes of integrable billiards {which we think are new}, namely 
\begin{itemize}
\item {conic sections focused at the center } for the Kepler billiards;
\item {well-oriented parabola focused at the center for Stark-type billiards};
\item confocal conic sections for Euler's two-center problem.
\item {Moreover, some of these integrable conic section reflection walls in the Kepler and in the two-center problem are allowed to be combined when they are confocal.}
\end{itemize}

	We organize this article as follows:
	
	{In} Section \ref{sec: conformal_trans}, {we} introduce conformal transformations between mechanical billiard systems. In particular, we explain that conformal transformations preserve integrability of mechanical billiards. {As a first application,} we show that with conformal transformations we get infinitely many families of planar mechanical billiards which are integrable at one particular energy level.
	
	In Section \ref{sec: duality_Hook_Kepler}, we explain the duality between the Hooke billiard and the Kepler billiard and establish our results concerning them.  
	
	{In Section \ref{sec: Stark}, we study the integrability of Stark-type mechanical billiards. In particular, we provide a short alternative proof to the theorem of Korsch-Lang \cite{Korsch-Lang}.}
	
	In Section \ref{sec: two_center_problem}, we apply {Birkhoff's conformal transformation} 
	to the classical Euler's two-center problem and establish our results concerning this system. 
	
	
	\section{{Conformal Transformations and Mechanical {Billiards}}}
	\label{sec: conformal_trans}
	\subsection{{Duality between Integrable Mechanical Billiards}}
	{
	We start our discussion by the following definition of integrable mechanical system.
	\begin{defi}
		Let $(M, g)$ be a 2-dimensional Riemannian manifold, $U$ a smooth function on $M$, $\mathcal{B} \subset M$ a $C^{1}$-curve, and $\mathcal{E} \subset \mathbb{R}$ such that $(M, g, U, \mathcal{B}, \mathcal{E})$ is a 2-dimensional mechanical billiard, meaning that $(M, g, U)$ is a natural mechanical system and the motions are assumed to carry energies from $\mathcal{E}$ and are reflected elastically at $\mathcal{B}$.  We call the system $(M, g, U, \mathcal{B}, \mathcal{E})$ \emph{integrable} when there exists an additional $C^{\infty}$ function 
		\[
		G: T^*M \to \mathbb{R} 
		\] 
		 independent of its energy $E$, which is preserved by the motions and by reflections at $\mathcal{B}$.
	\end{defi}	
	\begin{defi}
		Let $M $ and $M'$ be two smooth manifolds and $\phi:M \to M'$ be a {$k$-to-$1$ regular mapping}.  
		Then its cotangent lift $\Phi : T^*M \to T^*M'$ is defined as
		\[
		\Phi(x, \xi) = (x', \xi '), \quad x \in M,~\xi \in T^*_xM, ~x' \in M', ~\xi' \in T^*_{x'}M',
		\]
		with
		\[
		x' = \phi(x), \quad \xi' = (d\phi_x^*)^{-1} \xi,
		\]
		where $(d\phi_x^*)^{-1}$ is the inverse mapping of the isomorphism $d\phi_x^* : T^*_{\phi(x)}M' \to T^*_{x}M$ that is an adjoint of the derivative $d \phi_x : T_x M \to T_{\phi(x)}M$ at $x$. 
	\end{defi} 
	
	{Moreover, $\Phi$ preserves the canonical symplectic forms on the contangent bundles.}	
	More precisely we shall show that the cotangent lift $\Phi$ pulls the tautological one-form $\alpha$ on $T^*M'$ back to the tautological one-form $\alpha'$ on $T^*M$, i.e. $\Phi^* \alpha'= \alpha$. This means pointwise
	\[
	(d \Phi)^*_p (\alpha')_{p'} = (\alpha)_p,
	\]
	where $(d \Phi)^*_p$ is {the} adjoint of the derivative $d \Phi$ at $p$ and  $p'=\phi (p)$.
	Let $\pi: T^*M \to M$ and $\pi': T^*M' \to M'$ be {footprint} projections such that 
	\[
	\pi(x, \xi )= x, \quad \pi'(x', \xi')= x', \quad x \in M, \xi \in T^*M, x' \in M', \xi' \in T^*M'. 
	\]
	The tautological one-forms $\alpha,\alpha'$ are defined pointwise as
	\[
	(\alpha)_p = (d \pi)_p^* \xi, \quad (\alpha')_p = (d \pi')_{p'}^* \xi', 
	\]
	where $p = (x, \xi), p'= (x', \xi')$ and $ (d \pi)_p^*$, $(d \pi')_{p'}^*$ are {adjoints} of the derivatives of $\pi$ and $\pi'$ at $p$ and $p'$ respectively.
	We now have
	\begin{align*}
	(d \Phi)^*_p (\alpha')_{p'} &= (d \Phi)^*_p (d \pi')_{p'}^* \xi' = (d (\pi' \circ  \Phi))^*_p  \xi' = (d ({\phi} \circ \pi))^*_p  \xi'\\
	&= (d \pi)^*_p (d {\phi})^*_p \xi' = (d \pi)^*_p \xi= (\alpha)_p.
	\end{align*}
}
	
	Now we are ready to state our first theorem. 
	\begin{theorem}
		\label{thm: conformal_trans}
		Let $(M,g,U, \mathcal{B}, \mathcal{E})$ and $(M',g',U', \mathcal{B}', \mathcal{E}')$ be two 2-dimensional natural mechanical systems, where $\mathcal{E}$ and $\mathcal{E'}$ consist of regular values of the energies. { Let  $\phi:M \to M'$ be a conformal $k$-to-$1$ smooth regular mapping for some $k \in \mathbb{N}_{+}$} 
		and assume that $ \phi(\mathcal{B}) \subset \mathcal{B'} $. Suppose also that its cotangent lift $\Phi : T^*M \to T^*M'$ sends each energy  {hypersurface} with energy $ e \in \mathcal{E}$ to an energy {hypersurface} with energy in $ e' \in \mathcal{E}'$.
		
		{Under these assumptions, if $(M',g',U', \mathcal{B}', \mathcal{E}')$ is integrable, then $(M,g,U, \mathcal{B}, \mathcal{E})$ is also integrable. 
		Additionally, if $(M,g,U, \mathcal{B}, \mathcal{E})$ is integrable and $\psi (\mathcal{B}') \subset \mathcal{B}$ for a 
		{smooth} inverse branch $\psi: M' \to M $ of $\phi: M \to M'$,
		and  its cotangent lift $\Psi : T^*M' \to T^*M$ sends each energy {hypersurface} with energy $ e' \in \mathcal{E}'$ to an energy {hypersurface} with energy $e \in \mathcal{E}$,
		then $(M',g',U', \mathcal{B}', \mathcal{E}')$ is also integrable.}\
	\end{theorem}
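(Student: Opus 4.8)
The plan is to argue via the Maupertuis--Jacobi least action principle together with the identity $\Phi^*\alpha' = \alpha$ (hence $\Phi^*\omega' = \omega$) already established above. First I would make the energy-matching hypothesis concrete. Writing the conformality of $\phi$ as $\phi^* g' = \lambda^2 g$ for a positive smooth function $\lambda$ on $M$, and using that the cotangent lift scales dual norms by $\lambda^{-1}$ (so that $E' \circ \Phi(x,\xi) = \tfrac12\lambda(x)^{-2}\|\xi\|^2_{g^*} - U'(\phi(x))$), the condition that $\Phi$ maps $\{E=e\}$ into $\{E'=e'\}$ is equivalent to the pointwise identity $e + U = \lambda^2\,(e' + U'\circ\phi)$ on the Hill region $\{e+U>0\}$. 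Equivalently, $\phi$ is a local isometry from the Jacobi metric $g_e := (e+U)\,g$ to the Jacobi metric $g'_{e'} := (e'+U')\,g'$, i.e. $\phi^* g'_{e'} = g_e$.

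Next I would transport the billiard orbits. By the Maupertuis principle the unparametrized orbits of $(M,g,U)$ of energy $e$ are precisely the geodesics of $g_e$ inside the Hill region, and likewise for the primed system; since $\phi$ is a local isometry between the two Jacobi metrics it sends geodesics to geodesics, hence unparametrized energy-$e$ orbits to unparametrized energy-$e'$ orbits. (Alternatively one may bypass the Jacobi metric: $\Phi$ is a symplectomorphism mapping the regular level $\{E=e\}$ onto the regular level $\{E'=e'\}$, hence it carries the characteristic foliation of the first onto that of the second, and these foliations are exactly the unparametrized Hamiltonian orbits.) For the reflections: the elastic reflection at $\mathcal{B}$ is reflection of the velocity across the $g$-tangent line of $\mathcal{B}$ with unchanged speed, which because $g_e$ is pointwise conformal to $g$ is the same as the geodesic billiard reflection for $g_e$ at $\mathcal{B}$; a local isometry carrying $\mathcal{B}$ into $\mathcal{B}'$ then takes this to the geodesic billiard reflection for $g'_{e'}$ at $\mathcal{B}'$, i.e. to the elastic reflection of the primed system. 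Lifting to the cotangent bundles, $\Phi$ maps every orbit-with-reflections of $(M,g,U,\mathcal{B},e)$, viewed as a subset of $\{E=e\}$, onto an orbit-with-reflections of $(M',g',U',\mathcal{B}',e')$.

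With this in hand, given an integrating function $G': T^*M' \to \mathbb{R}$ for $(M',g',U', \mathcal{B}', \mathcal{E}')$, set $G := G'\circ\Phi : T^*M \to \mathbb{R}$. It is $C^\infty$ since $\phi$ is a smooth regular map, so $\Phi$ is smooth; it is constant along the flow of $(M,g,U)$ and invariant under reflections at $\mathcal{B}$, because $\Phi$ sends orbits-with-reflections to orbits-with-reflections along which $G'$ is constant; and it is independent of $E$ because $\Phi$ is a local diffeomorphism, so $G$ and $E'\circ\Phi$ are functionally independent wherever $G'$ and $E'$ are, while $E'\circ\Phi$ is constant on each regular level $\{E=e\}$ and hence equals $h(E)$ for a function $h$ that cannot be locally constant (its level sets, being $\Phi$-preimages of regular levels of $E'$, are hypersurfaces), so $dG\wedge dE\neq 0$ on a dense set. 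This proves the first assertion. For the converse, replace $\phi$ by the given smooth inverse branch $\psi: M' \to M$ of $\phi$, which is again a conformal regular map, in fact injective since $\phi\circ\psi = \mathrm{id}_{M'}$, with $\psi(\mathcal{B}')\subset\mathcal{B}$ and whose cotangent lift $\Psi$ matches the prescribed energy levels by hypothesis; the identical argument applied to $\psi$ shows that $G'' := G\circ\Psi$ integrates $(M',g',U', \mathcal{B}', \mathcal{E}')$.

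I expect the main obstacle to be the careful treatment of the reflections --- checking that the conformal map genuinely intertwines the elastic reflection laws of the two systems (via angle preservation and the conformal invariance of reflections across a line) and that this is compatible with the cotangent lift on the energy hypersurfaces --- together with the bookkeeping forced by $\phi$ being only $k$-to-$1$ rather than a global diffeomorphism, which is precisely why the converse direction requires the extra hypothesis of an inverse branch $\psi$ defined on all of $M'$.
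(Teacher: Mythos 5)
Your proposal is correct, and it reaches the conclusion by a genuinely different route from the paper. The paper stays entirely on the cotangent bundle: it observes that $X_{H}$ and $X_{\Phi^{*}H'}$ are both tangent to the common regular level $\{H=e\}=\{\Phi^{*}H'=e'\}$ and hence proportional there (this is exactly your parenthetical characteristic-foliation remark, which in fact supplies the justification the paper leaves implicit), then checks conservation of $G=\Phi^{*}G'$ by a Poisson-bracket computation, and handles reflections by noting that $d\phi$ preserves angles and relative speeds at $\mathcal{B}$. You instead unpack the energy-matching hypothesis into the pointwise identity $e+U=\lambda^{2}(e'+U'\circ\phi)$ and recast everything through the Maupertuis--Jacobi principle: $\phi$ becomes a local isometry of Jacobi metrics, orbits become geodesics, and the elastic reflection becomes the geodesic billiard reflection for the (conformally equivalent) Jacobi metric. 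What your route buys is conceptual transparency --- it makes explicit what the energy-matching assumption really says about $U$, $U'$ and the conformal factor, and it packages flow and reflection into a single geodesic-billiard statement; you also spell out the functional independence of $G$ from $E$, which the paper glosses over. What the paper's route buys is directness and the avoidance of the degeneracy of the Jacobi metric on the Hill boundary $\{e+U=0\}$, where the Maupertuis correspondence requires care (you do restrict to $\{e+U>0\}$, which suffices here since reflections and nontrivial dynamics take place in the interior of the Hill region). Both treatments of the converse via the inverse branch $\psi$ are identical.
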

	\begin{proof}
		We first suppose that $(M',g',U', \mathcal{B}', \mathcal{E}')$ is integrable.
		Since the {energies} from $\mathcal{E'}$ and $\mathcal{E}$ are mapped to each other, the vector fields $X_{H}$ and $X_{\Phi^* H'}$ leave the common energy hypersurface $$\{H = e\}=\{\Phi^* H'=e'\} \quad e \in \mathcal{E}, e' \in \mathcal{E'}$$
		invariant, on which both {vector fields} are non-vanishing by the assumption that $e$ is a regular value of $H$. 
		Thus, there exists a smooth function $\rho : T^* M \to \mathbb{R} \backslash \{ 0 \}$ such that $X_{H} = \rho X_{\Phi^* H'}$. This means $X_{H}$ and $X_{\Phi^* H'}$ {agree} up to time parametrization.
		
		From integrability of $(M',g',U', \mathcal{B}',e')$, there exists {a} first integral $G'$ that is independent of energy $H'$. {Thus}
		\begin{equation*}
		\mathcal{L}_{X_{H'}} G '|_{H' = e'} =  \{ H', G'  \} |_{H' = e'} = 0,
		\end{equation*}
		where $\mathcal{L}_{X_{H'}}$ is the Lie derivative along the vector field $X_{H'}$.
		By setting $G :=  \Phi^ * G'$, we obtain
		\begin{equation*}
		\mathcal{L}_{X_H} G |_{H = e} =  \rho	\mathcal{L}_{X_{\Phi^* H'}} G |_{H = e}= \rho\{\Phi^* H', G  \}|_{H = e} = 0.
		\end{equation*}
		So, $G$ is conserved along the flow {of $X_H$ on $\{H = e\}$.}
		
		Now, we check the conservation of $G$ before and after the reflection at $\mathcal{B}$. Take a point $b \in \mathcal{B}$, then $b' = \phi(b)$ lies in $\mathcal{B'}$ from the assumption $\phi(\mathcal{B}) \subset \mathcal{B'} $. Let $(v'_{-}, v'_{+})$ be a pair of incoming and outgoing vector at $b' \in \mathcal{B'}$ so that $v'_{-}$ and $v'_{+}$ have the same $g'$-metric and angles they made with the normal agree up to sign. There exists $(v_{-}, v_{+})$ such that $(d\phi_b( v_{-}), d\phi_b (v_{+})) = (v'_{-}, v'_{+})$. From the conformality of $\phi$, the vectors $v_{-}$ and $v_{+}$ have the same $g$-metric and the angle with the normal agree at $b \in \mathcal{B}$ up to sign. Therefore $(v_{-}, v_{+})$ are vectors before and after an elastic reflection at $b$. Since $G'$ is invariant under the reflection at $\mathcal{B'}$, $G:=  \Phi^ * G'$ is then invariant under the reflection at $\mathcal{B}$. 
		
		{We now suppose that $(M,g,U, \mathcal{B}, \mathcal{E})$ is integrable. Since $\phi: M \to M'$ is a regular $k$-to-$1$ covering map, there exists $k$ smooth regular inverse branches of $\phi$. 
		Let $\psi: M' \to M$ be such an inverse branch. The above argument now works the same for $\psi$ in place of $\phi$}.
	
\end{proof}
	
	\begin{rem} When the billiard mappings are well-defined, then the above theorem actually shows that they are (semi-)conjugate which implies their equivalence {(up to covering)} in the sense of dynamical systems.  
	\end{rem}
	
	\begin{rem}  The theorem can be directly generalized to certain multi-dimensional case as well. Nevertheless, in view of Liouville's theorem, conformal mappings on {a domain} of $\mathbb{R}^{d}, d \ge 3$ are rather limited. Thus we may in general expect more non-trivial applications in the two-dimensional case.
	\end{rem}
	
	Pulling back a mechanical billiard system by the cotangent lift of a conformal mapping without restricting its energy gives another mechanical billiard system without the necessity to change time, where the kinetic energy is transformed into a quadratic form of velocity depending on the base point in the configuration space. In our applications, we shall rather fix its energy and make proper time change in order to have an iso-energetic correspondence between mechanical billiards in the plane with standard kinetic energies. 
	
	{Before applying this theorem to concrete problems, we first state a lemma concerning the time-reparametrization of a Hamiltonian system on a fixed energy hypersurface.}
	\begin{lemma}
		\label{lem: time-reparametrization}
		{Let $H(p, q)$ be a Hamiltonian function defined on $T^{*} \mathbb{R}^{2}$ equipped with its canonical symplectic form. }Set $\hat{H}:=g(q) \cdot H$ where $g(q)>0$ is a  $C^{\infty}$-smooth function of $q$. Then the two systems defined by $H$ and $\hat{H}$ are equivalent up to a time-reparametrization given by {$d \hat{t}= d t/g(q)$} on {their} zero {energy-hypersurfaces.}
	\end{lemma}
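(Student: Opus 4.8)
The plan is to compare the Hamiltonian vector fields $X_H$ and $X_{\hat H}$ directly on the level set $\{H=0\}$ and read off the claimed time change. Since $\hat H = g\cdot H$ with $g=g(q)$ a function of the base point only, the Leibniz rule for Hamiltonian vector fields gives $X_{\hat H} = g\,X_H + H\,X_g$ on all of $T^*\mathbb{R}^2$ (equivalently $\{\hat H, F\} = g\{H,F\} + H\{g,F\}$ for every smooth $F$). The key observation is that on the common hypersurface $\{H=0\}$ — which coincides with $\{\hat H=0\}$ because $g>0$ — the correction term $H\,X_g$ vanishes identically, so that
\[
X_{\hat H}\big|_{\{H=0\}} \;=\; g\,X_H\big|_{\{H=0\}}.
\]
In particular both fields are tangent to $\{H=0\}$ there and are strictly positive scalar multiples of one another.

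Next I would upgrade this proportionality of vector fields to the asserted reparametrization. Let $\gamma(t)$ be an integral curve of $X_H$ contained in $\{H=0\}$, and introduce a new parameter $\hat t$ by $d\hat t = dt/g(q(t))$; since $g>0$ and is smooth, this defines a strictly monotone change of parameter with smooth inverse. Setting $\hat\gamma(\hat t):=\gamma(t(\hat t))$ and applying the chain rule,
\[
\frac{d\hat\gamma}{d\hat t} \;=\; \frac{d\gamma}{dt}\,\frac{dt}{d\hat t} \;=\; g(q)\,X_H(\gamma) \;=\; X_{\hat H}(\hat\gamma),
\]
where the last equality is the identity from the previous paragraph. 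Hence $\hat\gamma$ is an integral curve of $X_{\hat H}$ in $\{\hat H=0\}$; running the same computation in reverse shows conversely that every integral curve of $X_{\hat H}$ on its zero level arises in this way. This is exactly the equivalence of the two systems up to the time change $d\hat t = dt/g(q)$ on their zero energy hypersurfaces.

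There is no real obstacle here; the statement is essentially the algebraic identity $X_{gH}=gX_H+HX_g$ combined with the vanishing of $HX_g$ on $\{H=0\}$. The only points that deserve a line of care are: first, recording that $\{H=0\}=\{\hat H=0\}$ (so the statement is meaningful) and emphasising that it is precisely the term $H X_g$ — which in general does make the global flow of $\hat H$ genuinely different from a reparametrised flow of $H$ — that drops out on this particular level; and second, verifying that $d\hat t = dt/g(q)$ gives an honest orientation-preserving reparametrisation along each orbit, which is immediate from $g>0$ and smoothness. (If one additionally wants the correspondence to be non-degenerate, i.e.\ to carry non-constant orbits to non-constant orbits, one may assume $0$ is a regular value of $H$, but this is not needed for the statement as phrased.)
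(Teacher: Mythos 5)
Your proof is correct and takes essentially the same route as the paper: the paper likewise computes Hamilton's equations for $\hat H = gH$, observes that the extra term proportional to $H$ (your $H\,X_g$, their $g'(q)\cdot H$) vanishes on the zero energy level, and concludes that the vector fields agree up to the factor $g(q)$. Your explicit chain-rule verification of the reparametrization $d\hat t = dt/g(q)$ is a detail the paper leaves implicit, but the underlying argument is identical.
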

	\begin{proof}
		
		The statement immediately follows from the equation of motion:
		\begin{align*}
		&\dot{q}=\frac{\partial \hat{H}}{\partial p} = g(q) \cdot \frac{\partial H}{\partial p},\\
		&\dot{p}=\frac{\partial \hat{H}}{\partial p} = g'(q) \cdot  H +  g(q) \cdot \frac{\partial H}{\partial q}=g(q) \cdot \frac{\partial H}{\partial q},
		\end{align*}
	{when being restricted to their common zero energy-hypersurfaces.}
	\end{proof}
	
	{We now apply Theorem \ref{thm: conformal_trans} to some central force problems.}
	\begin{theorem}
		\label{thm: duality}
		{Let $f, s  {\in \mathbb{R}}$ be two real parameters.} For any $k \in \mathbb{N}, k \geq 2$ the cotangent lift of the conformal mapping {$$\mathbb{C} \setminus O \mapsto  \mathbb{C} \setminus O, z \mapsto q = z^k$$} gives the transformation between two hamiltonians 
		\begin{equation*}
		\frac{|w|^2}{2} + f |z|^{2k -2} + s
		\end{equation*}
		and
		\begin{equation*}
		\frac{|p|^2}{2} + \frac{s}{|q|^{2- 2/k}} + f
		\end{equation*}
		on {their} zero-energy surface, up to time parametrization. 
		
		In particular, it gives
		\begin{itemize}
			\item for {$(f >0, s<0)$, $(f<0, s>0)$, or $(f<0, s<0)$}, an iso-energetic transformation between two central force systems.  {In particular, when $k=2$ and $f >0, s<0$, between the Hooke system of isotropic harmonic oscillators and the Kepler system in the plane.}
			\item for $(f=0, s<0)$ or $(s=0, f<0)$, an iso-energetic transformation between the free motion in the plane with positive energy and some homogeneous central force systems at their energy zero.
			\item for $f = s = 0$, a trivial iso-energetic transformation between zero-energy free motions in the plane.
		\end{itemize}
	\end{theorem}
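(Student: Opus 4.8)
The plan is to compute the cotangent lift $\Phi$ of $\phi(z)=z^{k}$ explicitly, absorb the resulting conformal factor into a time change via Lemma~\ref{lem: time-reparametrization}, and then read off each listed case from the signs of $f,s$. Write $\mathcal{H}_{1}=\tfrac12|w|^{2}+f|z|^{2k-2}+s$ for the first displayed Hamiltonian (on the $z$-plane) and $\mathcal{H}_{2}=\tfrac12|p|^{2}+s\,|q|^{-(2-2/k)}+f$ for the second (on the $q$-plane). First I would note that $\phi:\mathbb{C}\setminus O\to\mathbb{C}\setminus O$, $z\mapsto z^{k}$, is a holomorphic $k$-to-$1$ covering which is a local biholomorphism at every point (its only ramification point $z=0$ having been removed), hence conformal, with the $k$ smooth $k$-th root maps as inverse branches, so that the framework of Theorem~\ref{thm: conformal_trans} applies; for the present purely computational statement, however, it suffices to combine the symplecticity of $\Phi$ with Lemma~\ref{lem: time-reparametrization}. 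Using the complex notation $T^{*}\mathbb{C}\cong\{(z,w)\}$ with tautological one-form $\operatorname{Re}(\bar w\,dz)$, the defining relation $\xi=d\phi^{*}_{x}\xi'$ of the cotangent lift together with $dq=kz^{k-1}\,dz$ yields $w=\overline{kz^{k-1}}\,p$, hence
\begin{equation*}
|w|^{2}=k^{2}|z|^{2k-2}|p|^{2},\qquad |q|=|z|^{k}\ \Longrightarrow\ |z|^{2k-2}=|q|^{\,2-2/k}.
\end{equation*}

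Next I would pull $\mathcal{H}_{2}$ back by $\Phi$: substituting the relations above gives $\Phi^{*}\mathcal{H}_{2}=\dfrac{|w|^{2}}{2k^{2}|z|^{2k-2}}+\dfrac{s}{|z|^{2k-2}}+f$. Multiplying by the conformal factor $g(z)=k^{2}|z|^{2k-2}$, which is $C^{\infty}$ and strictly positive on $\mathbb{C}\setminus O$, produces $g\cdot\Phi^{*}\mathcal{H}_{2}=\tfrac12|w|^{2}+k^{2}f|z|^{2k-2}+k^{2}s$. Since $\Phi$ is a symplectomorphism (established in the excerpt via $\Phi^{*}\alpha'=\alpha$), Lemma~\ref{lem: time-reparametrization} then shows that on their common zero-energy hypersurface $g\cdot\Phi^{*}\mathcal{H}_{2}$ and $\Phi^{*}\mathcal{H}_{2}$ — and hence, via $\Phi$, $\mathcal{H}_{2}$ itself — define the same oriented trajectories up to the time change $d\hat t=dt/g(z)$. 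The overall constant $k^{2}$ is harmless: absorbing it either into one further global time rescaling or into the free parameters $f,s$ turns $g\cdot\Phi^{*}\mathcal{H}_{2}$ into $\mathcal{H}_{1}$, and since the sign of each coefficient is unchanged this proves the claimed equivalence of $\mathcal{H}_{1}$ and $\mathcal{H}_{2}$ on their zero-energy surfaces.

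It then remains to identify the zero-energy surfaces and run the case analysis. The level $\{\mathcal{H}_{1}=0\}$ is exactly the energy-$(-s)$ hypersurface of the central force problem in the plane with potential $f|z|^{2k-2}$, and $\{\mathcal{H}_{2}=0\}$ is the energy-$(-f)$ hypersurface of the central force problem with potential $s|q|^{-(2-2/k)}$; for $k=2$ these are the isotropic harmonic oscillator and the Kepler potential, and in general the two exponents satisfy the classical relation $\bigl((2k-2)+2\bigr)\bigl(-(2-2/k)+2\bigr)=4$. Now: for $(f>0,s<0)$, $(f<0,s>0)$ and $(f<0,s<0)$ both zero-energy levels are nonempty and neither potential is constant, giving an iso-energetic transformation between two genuine central force systems — the classical Hooke–Kepler correspondence precisely when $k=2$, $f>0$, $s<0$ (bounded oscillator at positive energy $-s$ versus attractive Kepler at negative energy $-f$); for $f=0$, $s<0$ the $z$-side reduces to $\tfrac12|w|^{2}+s$, i.e.\ free motion at the fixed positive energy $-s$, while the $q$-side is a homogeneous central force system at energy $0$, and symmetrically for $s=0$, $f<0$; and for $f=s=0$ both sides are zero-energy free motion, i.e.\ the trivial transformation. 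One should also observe that the excluded pair $(f>0,s>0)$ is exactly the one for which both zero-energy levels are empty.

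The only real friction point I anticipate is the bookkeeping around the cotangent lift — pinning down $w=\overline{kz^{k-1}}\,p$ and the resulting factor $k^{2}$, and checking that the conformal factor $g$ stays strictly positive. The latter is what forces the entire construction onto $\mathbb{C}\setminus O$, consistently with the fact that the origin is simultaneously the ramification point of $z\mapsto z^{k}$ and the singular point of both potentials, where the correspondence genuinely degenerates and would require a Levi-Civita-type regularization not needed for the present statement.
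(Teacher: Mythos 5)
Your proof is correct and follows essentially the same route as the paper: compute the cotangent lift of $z\mapsto z^{k}$ explicitly, pull back the Kepler-side Hamiltonian, and absorb the conformal factor via the time-reparametrization lemma on the zero-energy level. The only cosmetic difference is that the paper removes the constant factor $k$ by renormalizing the lift to $p=w/\bar z^{\,k-1}$ before pulling back (calling this an inessential constant change of time), whereas you carry the genuinely symplectic lift through and absorb the resulting $k^{2}$ at the end --- both amount to the same constant mechanical rescaling --- and your explicit sign analysis of $(f,s)$ fills in details the paper leaves implicit.
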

	\begin{proof}
		
		{The cotangent lift for $\phi: z \mapsto z^k$ is given by 
		\[
		\Phi: (z,w) \mapsto \left(q=z^k, p=\frac{w}{k \overline{z}^{k-1}}\right).
		\]
		and is a symplectic map. This follows from our discussions above but it is also {direct} to have a verification with complex notations. Indeed, the canonical symplectic form $\omega_0 = \sum d q_i \wedge d p_i$ is given by $\omega_0= d \alpha_0$, where $\alpha_0= \sum p_i d q_i$ is the tautological one-form. When we identify $\mathbb{R}^2$ and $\mathbb{C}$ and describe $p = p_1 + i p_2$ and $q = q_1 + i q_2$, we can rewrite the tautological one-form into $\alpha_0=\operatorname{Re}(\bar{p} dq)$. By substituting $q = z^k$ and $p = \frac{\omega}{k \bar{z}^{k-1}}$, we obtain $\operatorname{Re}(\bar{p} dq) = \operatorname{Re}(\bar{\omega} dz)$, thus $\omega_0(p,q)= \omega_0 (z,w)$.
		}

		For normalization purpose, we would prefer the conformal symplectic transformation
		\[\Phi: (z,w) \mapsto \left(q=z^k, p=\frac{w}{ \overline{z}^{k-1}}\right).
		\]
		{which is equivalent to making an additional inessential constant change of time}
		{which then} pulls the system
		
		\[\frac{|p|^2}{2} + \frac{s}{|q|^{2 -2/k}} + f=0
		\]
		back to
		\[
		\frac{|w|^2}{2 |z|^{2k-2}} + \frac{s}{|z|^{2k -2}} + f  = 0.
		\]
		On this energy level {we may now apply Lemma \ref{lem: time-reparametrization} and} multiply the Hamiltonian by the factor $ |z|^{2k-2}$ which just reparametrizes the flow on this energy hypersurface.
		With this we get
		\[
		\frac{|w|^2}{2} + f |z|^{2k -2} + s = 0.
		\]
		which is the system with Hamiltonian $\frac{|w|^2}{2} + f |z|^{2k -2} + s$ on its zero-energy level.
	\end{proof}
     {We remark that this has been used by McGehee for regularization purpose \cite{McGehee}}.
     
     \subsection{{Mechanical Billiards from Free Billiards}}
	We now draw our first consequences in the case $f = 0$. In this case one of the two systems is the system of free motions in the plane. It is classically known that a free billiard with a conic section as a reflection wall is integrable {\cite{Birkhoff}\cite{Tabachinkov}\cite{Kozlov-Treshchev}}. Namely, it allows elliptic, hyperbolic, parabolic, and line boundaries as integrable reflection walls. {We shall deduce this from our discussions on integrable Hooke/Kepler billiards in Section \ref{sec: duality_Hook_Kepler}, and include a direct proof for this fact in an Appendix \ref{sec: Joachimsthal}. }
	
	A conic section in the plane is described with six parameters as
	\begin{equation}
	\label{eq:conicsec}
	A z_1^2 + B z_1z_2 + C z_2^ 2 + D z_1 + E z_2  + F = 0,
	\end{equation}
	where all coefficients are real numbers and $A,B$, and $C$ are not all zero.
	{Since multiplication by a common factor to all the coefficients does not change the curve that it describes, only five out of the six parameters are free.}
	{In addition, when we identify conic sections which differ from each other just by scalings and rotations, then only three of the parameters are free.} 
 
	From this fact and Theorem \ref{thm: conformal_trans}, {we directly get the following proposition as an easy corollary.}
	
	\begin{prop}
		For any $k \in \mathbb{N}, k \geq 2$, the system $(\mathbb{C}, g_{flat}, 1 / |q|^{2-2/k},0)$  on zero-energy surface admit 5-parameter family of smooth integrable reflection walls without ruling out the scalings and the rotations, and 3-parameter family of smooth integrable reflection walls while ruling out the scalings and the rotations.
	\end{prop}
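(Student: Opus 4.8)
The plan is to deduce the proposition by combining Theorem \ref{thm: conformal_trans}, the $f=0$ case of Theorem \ref{thm: duality}, and the classical integrability of the free planar billiard with a conic reflection wall (proved directly in Appendix \ref{sec: Joachimsthal}). Setting $s=-1$ in Theorem \ref{thm: duality}, for each $k\ge 2$ the cotangent lift of the conformal $k$-to-$1$ map $\phi:z\mapsto q=z^{k}$ on $\mathbb{C}\setminus O$ relates, after the time-reparametrization of Lemma \ref{lem: time-reparametrization}, the free motion in the plane at a positive energy to the central force system with force function $U=1/|q|^{2-2/k}$ at energy zero (for $k=2$, the Kepler system at energy zero). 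Both energy values are regular values of the respective Hamiltonians, so Theorem \ref{thm: conformal_trans} transports integrability between the associated billiard systems; the task is then reduced to producing enough integrable reflection walls for the free billiard and pushing them forward.

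First I would fix a conic section $\mathcal{B}$, written as in \eqref{eq:conicsec}, with $O\notin\mathcal{B}$. Then $\phi$ restricts to a local diffeomorphism near $\mathcal{B}$, so the image $\mathcal{B}^{*}:=\phi(\mathcal{B})$ is a smooth immersed curve in $\mathbb{C}\setminus O$, and by the classical fact the free billiard with reflection wall $\mathcal{B}$ is integrable on every positive energy level, with an additional first integral of Joachimsthal type. Applying the second half of Theorem \ref{thm: conformal_trans}, with an inverse branch $\psi$ of $\phi$ chosen so that $\psi(\mathcal{B}^{*})\subset\mathcal{B}$, then shows that the billiard $(\mathbb{C}\setminus O,\,g_{flat},\,1/|q|^{2-2/k},\,\mathcal{B}^{*},\,0)$ is integrable, the new first integral being the pullback under the cotangent lift $\Psi$ of the Joachimsthal-type integral; the elastic reflection law is matched at $\mathcal{B}^{*}$ precisely because $\phi$ is conformal, exactly as in the proof of Theorem \ref{thm: conformal_trans}. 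Finally, since \eqref{eq:conicsec} carries six coefficients of which five are free projectively and three remain after quotienting by rotations and scalings — and a rotation or scaling of $\mathcal{B}$ corresponds under $\phi$ to a rotation or scaling of $\mathcal{B}^{*}$ that preserves the energy-zero central-force dynamics — the family $\{\mathcal{B}^{*}\}$ has dimension $5$ in general and dimension $3$ modulo rotations and scalings.

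The step that genuinely needs care, and the only obstacle, is that $\phi$ is $k$-to-$1$: the inverse branch $\psi$ required above is a branch of the $k$-th root and is not single-valued on all of $\mathbb{C}\setminus O$, and indeed for $\mathcal{B}$ surrounding $O$ the full preimage $\phi^{-1}(\mathcal{B}^{*})$ is a union of $k$ rotated conics, which is not an integrable free billiard wall. The clean way around this is to require $\mathcal{B}$ to lie in an open angular sector $S$ of opening $2\pi/k$, a fundamental domain for the deck group $z\mapsto e^{2\pi i/k}z$; then $\phi|_{\mathcal{B}}$ is injective, a genuine inverse branch $\psi$ with $\psi\circ\phi=\operatorname{id}$ on a neighbourhood of $\mathcal{B}$ exists, $\psi(\mathcal{B}^{*})=\mathcal{B}$, and the argument of Theorem \ref{thm: conformal_trans} applies verbatim, the first integral being defined on the zero-energy hypersurface lying over the slit plane $\phi(S)$. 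This sector condition, like $O\notin\mathcal{B}$, is open and hence does not lower the dimension of the family, so the counts $5$ and $3$ are unaffected; as in Theorem \ref{thm: conformal_trans} one reads "integrable" here as the existence of this additional first integral on the relevant zero-energy hypersurface.
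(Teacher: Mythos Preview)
Your argument is correct and follows the same route the paper indicates: the paper simply declares the proposition a direct corollary of Theorem~\ref{thm: conformal_trans}, the $f=0$ case of Theorem~\ref{thm: duality}, and the classical integrability of free billiards with conic reflection walls, without spelling out further details. You go beyond this by explicitly confronting the inverse-branch issue required for the second half of Theorem~\ref{thm: conformal_trans} and resolving it via the open sector restriction; the paper glosses over this subtlety (and in its subsequent figures even depicts images of conics not confined to any sector, with self-intersections), so your treatment is in fact the more careful one while still yielding the same parameter counts.
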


	{We may as well consider the case $s = 0$ which also gives rise to free motion.  With the same argument we get the following proposition:}
	\begin{prop}
		{For any $k \in \mathbb{N}, k \geq 2$, the system $(\mathbb{C}, g_{flat},   |z|^{2k -2},0)$ on the zero-energy surface admit 5-parameter family of smooth integrable reflection walls without ruling out the scalings and the rotations, and 3-parameter family of smooth integrable reflection walls {while} ruling out the scalings and the rotations.}
	\end{prop}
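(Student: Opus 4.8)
The plan is to obtain the statement as an immediate corollary of Theorem~\ref{thm: duality} and Theorem~\ref{thm: conformal_trans}, arguing as for the previous proposition but with the free motion now appearing as the target of the square-type map rather than its source. First I would specialize Theorem~\ref{thm: duality} to the parameters $f=-1$, $s=0$: for each $k\ge 2$ the cotangent lift of the holomorphic --- hence conformal --- $k$-to-$1$ covering $\phi\colon\mathbb{C}\setminus O\to\mathbb{C}\setminus O$, $z\mapsto q=z^{k}$, together with the time-reparametrization of Lemma~\ref{lem: time-reparametrization}, furnishes an iso-energetic correspondence between the central force system with force function $U=|z|^{2k-2}$ on its energy level $0$ and the free motion in the $q$-plane on its energy level $-f=1>0$. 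To invoke Theorem~\ref{thm: conformal_trans} I would then check that the relevant energies are regular values on $\mathbb{C}\setminus O$: the free energy $|p|^{2}/2$ has its only critical level at $0\ne 1$, and the energy $|w|^{2}/2-|z|^{2k-2}$ has as its unique critical point $z=w=0$ (this uses $k\ge 2$, so $2k-2>0$), which lies outside $\mathbb{C}\setminus O$. Since the transformation is defined only on the punctured plane, the conclusion will be about the system on $\mathbb{C}\setminus O$.

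Next, I would feed in the classical fact --- recalled above and proved directly in Appendix~\ref{sec: Joachimsthal} --- that the free billiard in the plane bounded by an arbitrary conic section \eqref{eq:conicsec} is integrable on every positive energy level, in particular on the level $1$. Given such a conic $\mathcal{B}'\subset\mathbb{C}$, put $\mathcal{B}:=\phi^{-1}\bigl(\mathcal{B}'\cap(\mathbb{C}\setminus O)\bigr)\subset\mathbb{C}\setminus O$. As $\phi$ is a local diffeomorphism on $\mathbb{C}\setminus O$, the set $\mathcal{B}$ is again a smooth embedded curve, and $\phi(\mathcal{B})\subset\mathcal{B}'$; the first half of Theorem~\ref{thm: conformal_trans}, applied with $\mathcal{E}=\{0\}$ and $\mathcal{E}'=\{1\}$, then gives that $(\mathbb{C}\setminus O,\,g_{flat},\,|z|^{2k-2},\,\mathcal{B},\,\{0\})$ is integrable. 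Since no billiard map needs to be globally defined here, $\mathcal{B}$ need not be closed or bounded even when $\mathcal{B}'$ is closed --- for instance $\mathcal{B}'$ may be a line, a hyperbola, or an ellipse through $O$.

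It remains to count parameters. Equation \eqref{eq:conicsec} has six real coefficients of which only the projective class matters, so conic sections form a $5$-parameter family; since $\phi$ is surjective, distinct conics yield distinct reflection walls (the preimage recovers $\mathcal{B}'\setminus O$, hence $\mathcal{B}'$), so one genuinely obtains a $5$-parameter family of smooth integrable reflection walls. A scaling $z\mapsto\lambda z$ or rotation $z\mapsto e^{i\theta}z$ of the $z$-plane corresponds under $\phi$ to the scaling $q\mapsto\lambda^{k}q$, respectively rotation $q\mapsto e^{ik\theta}q$, of the $q$-plane, and conversely every planar scaling and rotation of the $q$-plane arises this way since $t\mapsto t^{k}$ is onto $\mathbb{C}\setminus O$; hence ruling out scalings and rotations amounts to quotienting the $5$-parameter family of conics by the induced action of the $2$-dimensional group of planar scalings and rotations, whose generic orbits are $2$-dimensional, which leaves the asserted $3$-parameter family. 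This is consistent with the remark preceding the proposition that only three of the six coefficients in \eqref{eq:conicsec} are free once scalings and rotations are divided out.

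As the argument is purely a corollary of the two theorems, I do not anticipate a real obstacle; the only points requiring a little care are the bookkeeping at the puncture $O$ --- ensuring $0$ remains a regular value of the energy after $O$ is deleted, which is automatic for $k\ge 2$ --- and the observation that pulling a reflection wall back by the covering $z\mapsto z^{k}$ preserves smoothness but may destroy closedness or boundedness, a feature which is irrelevant to the integrability assertion.
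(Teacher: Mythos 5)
Your proposal is correct and follows essentially the same route as the paper, which states this proposition without a separate proof as a direct corollary of Theorem~\ref{thm: duality} (with $s=0$, $f<0$) and Theorem~\ref{thm: conformal_trans} applied to the integrable conic-section walls of the free billiard, with the same $5$- and $3$-parameter count of conics. Your additional care about the puncture at $O$ and the regularity of the energy levels only makes explicit what the paper leaves implicit.
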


        	{We illustrate these propositions in the case $k = 2$.  The} complex square mapping $z \mapsto z^2 = q$ gives its lift 
	\begin{align*}
	&q_1 = z_1 ^2 - z_2^2 \\
	&q_2 = 2 z_1 z_2\\
	&p_1 = \frac{z_1w_1 {-} z_2 w_2}{z_1^ 2 + z_2^ 2} \\
	&p_2 = \frac{z_1w_2 {+} z_2 w_1}{z_1^ 2 + z_2^ 2}.
	\end{align*}
	after changing time-parametrization.

	Figure \ref{fig:transfomed_conicsection_k2} shows the reflection walls that are transformed from the  ellipses/hyperbolae
	\[
	\frac{(z_1 -  c_1)^2}{a^2} \pm  \frac{(z_2 -  c_2)^2}{b^2}=1
	\]
	by the mapping above $z \mapsto z^2$, in the case of $f=0$.
	\begin{figure}[p]
		\centering
		\begin{tabular}{cc}
			\begin{minipage}[t]{0.5\hsize}
				\includegraphics[keepaspectratio, height=4cm]{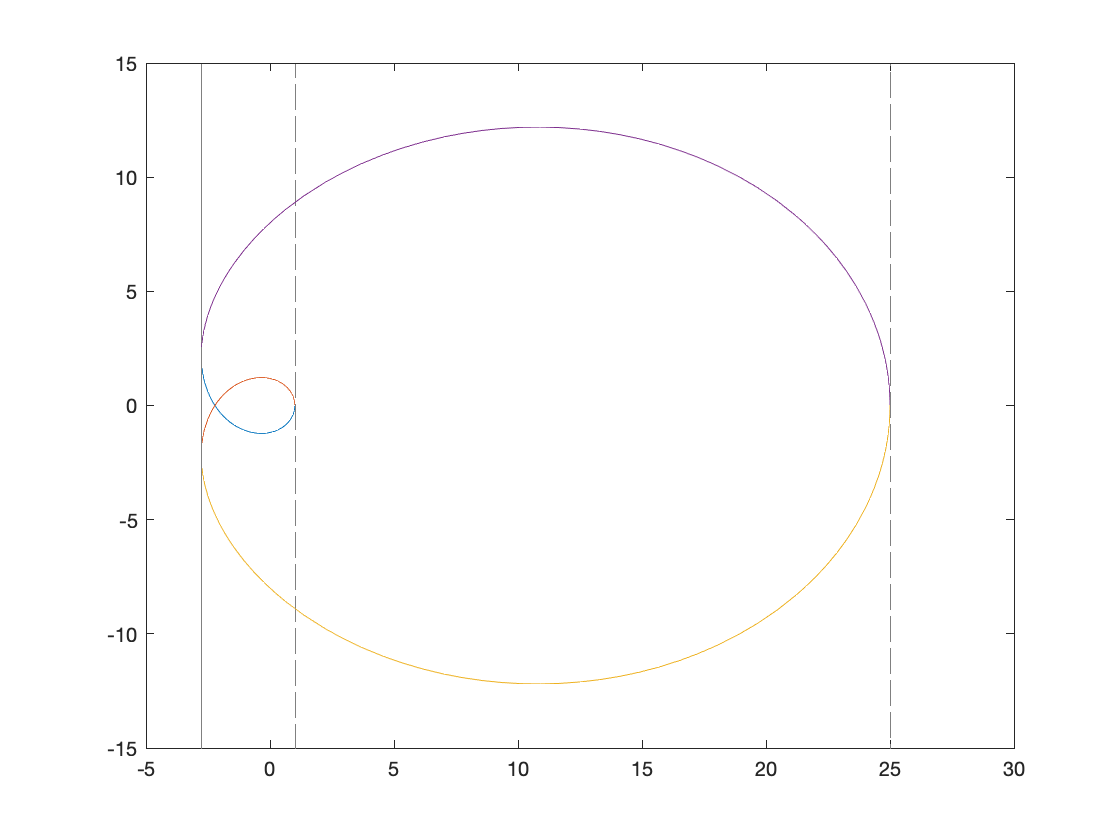}\\
				\vspace{-0mm}
				\centering
				\text{a. ellipse, $a = 3, b= 2, c_1 = 2, c_2 = 0$}
			\end{minipage}
			\begin{minipage}[t]{0.5\hsize}
				\includegraphics[keepaspectratio, height=4cm]{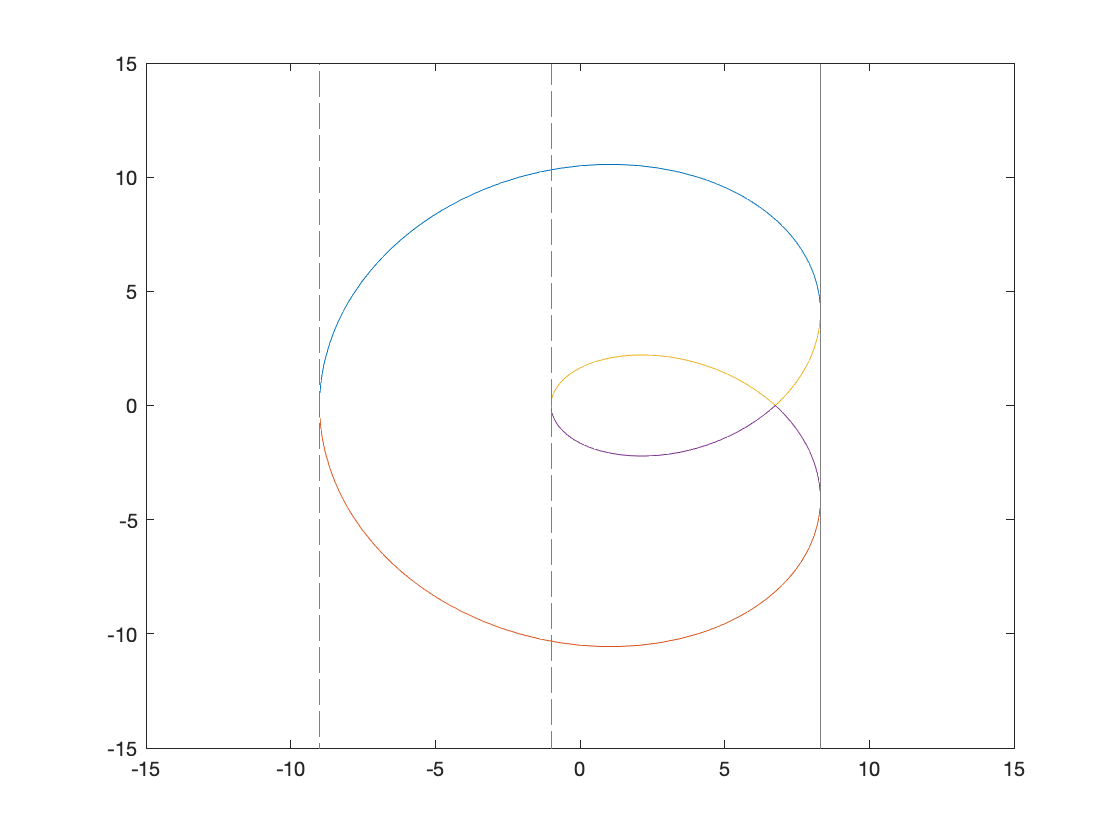}\\
				\hspace{3.5mm}
				\centering
				\text{b. ellipse, $a = 3, b= 2, c_1 = 0, c_2 = 1$}
			\end{minipage}
		\end{tabular}
		\begin{tabular}{cc}
			\begin{minipage}[t]{0.50\hsize}
				\centering
				\includegraphics[keepaspectratio, height=4cm]{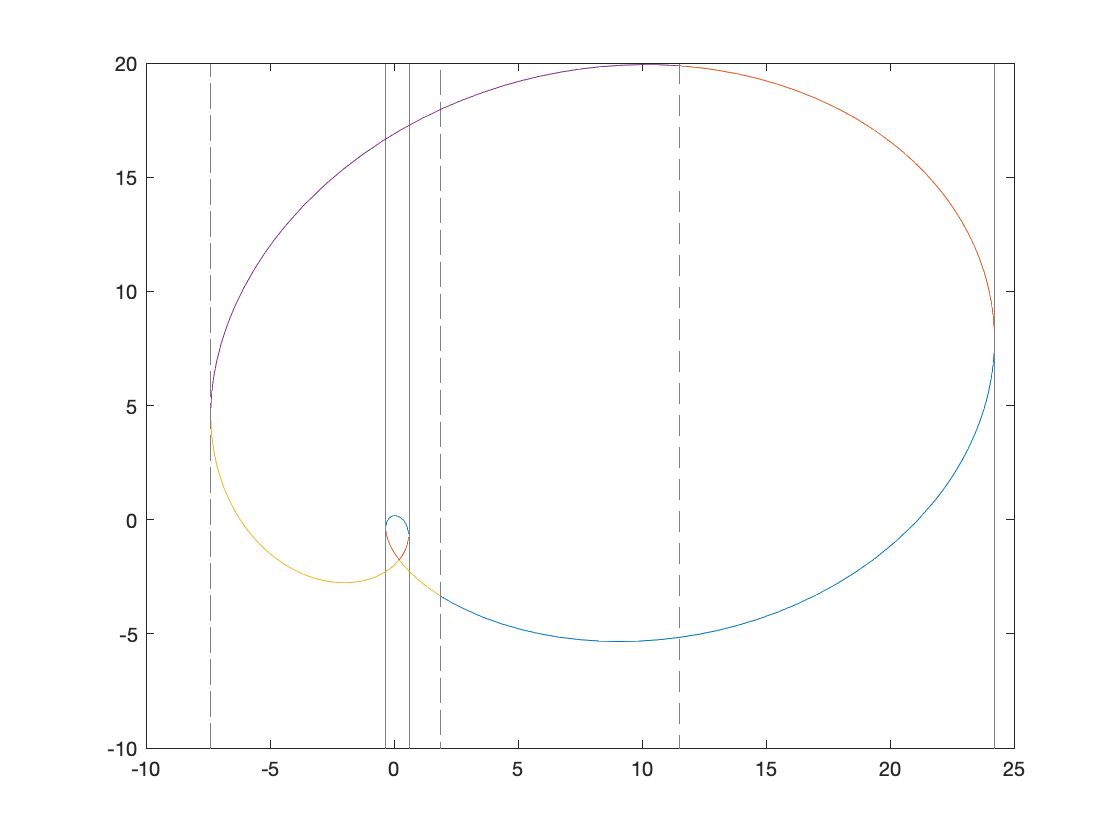}\\
				\centering
				\text{c. ellipse, $a = 3, b= 2, c_1 = 2, c_2 = 1$}\\
				\includegraphics[keepaspectratio, height=4cm]{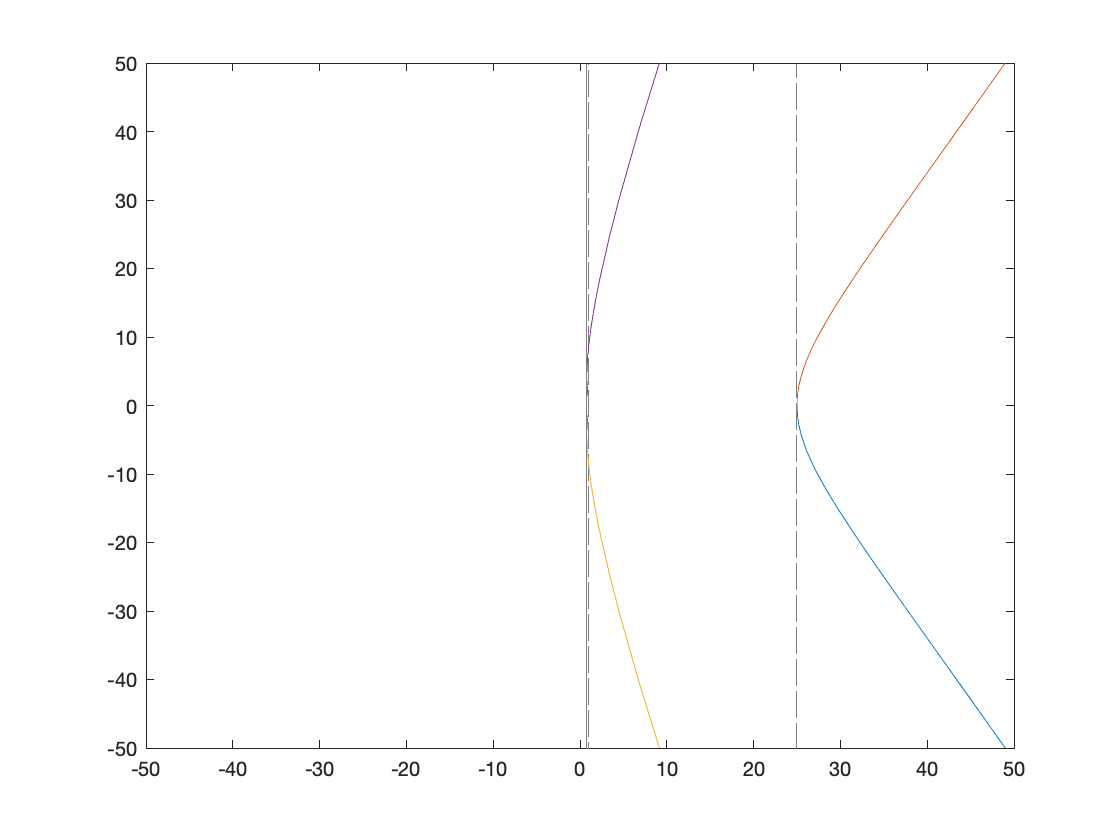}\\
				\centering
				\text{e. hyperbola, $a = 3, b= 2, c_1 = 2, c_2 = 0$}
				\includegraphics[keepaspectratio, height=4cm]{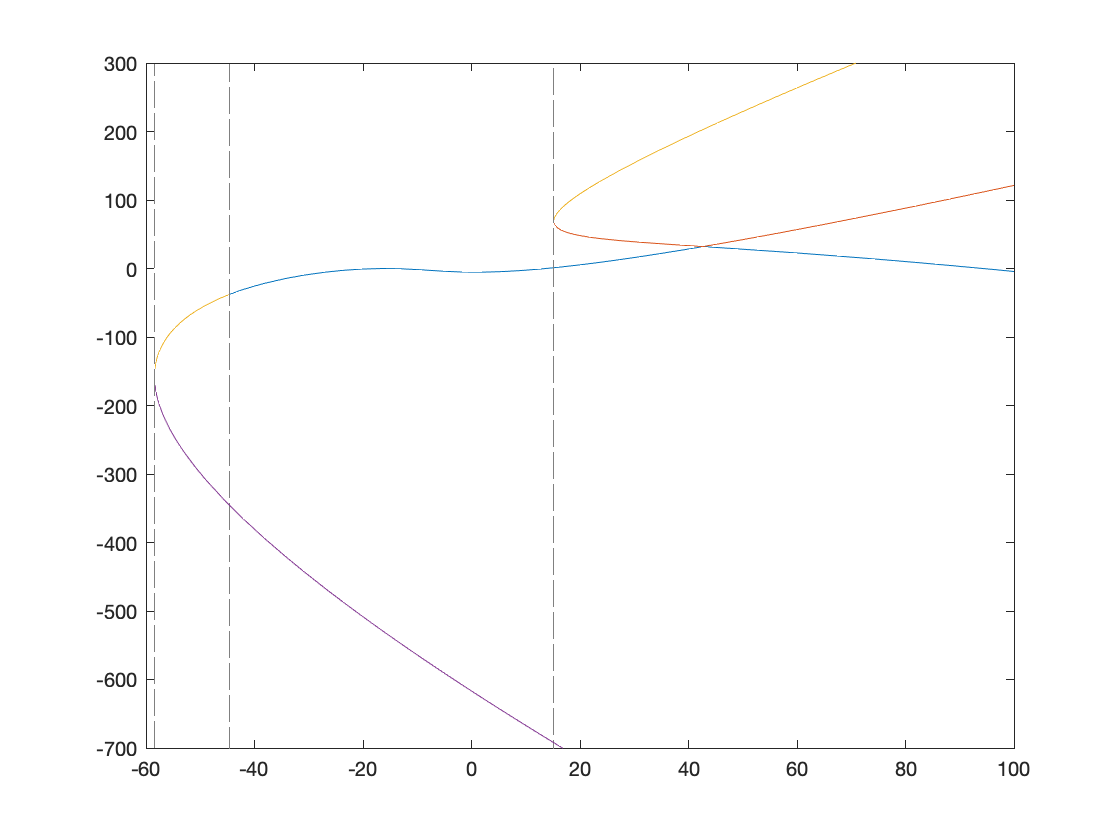}\\
				\centering
				\text{g. hyperbola, $a = 3, b= 2, c_1 = 3, c_2 = 4$}
			\end{minipage} &
			\begin{minipage}[t]{0.50\hsize}
				\centering
				\includegraphics[keepaspectratio, height=4cm]{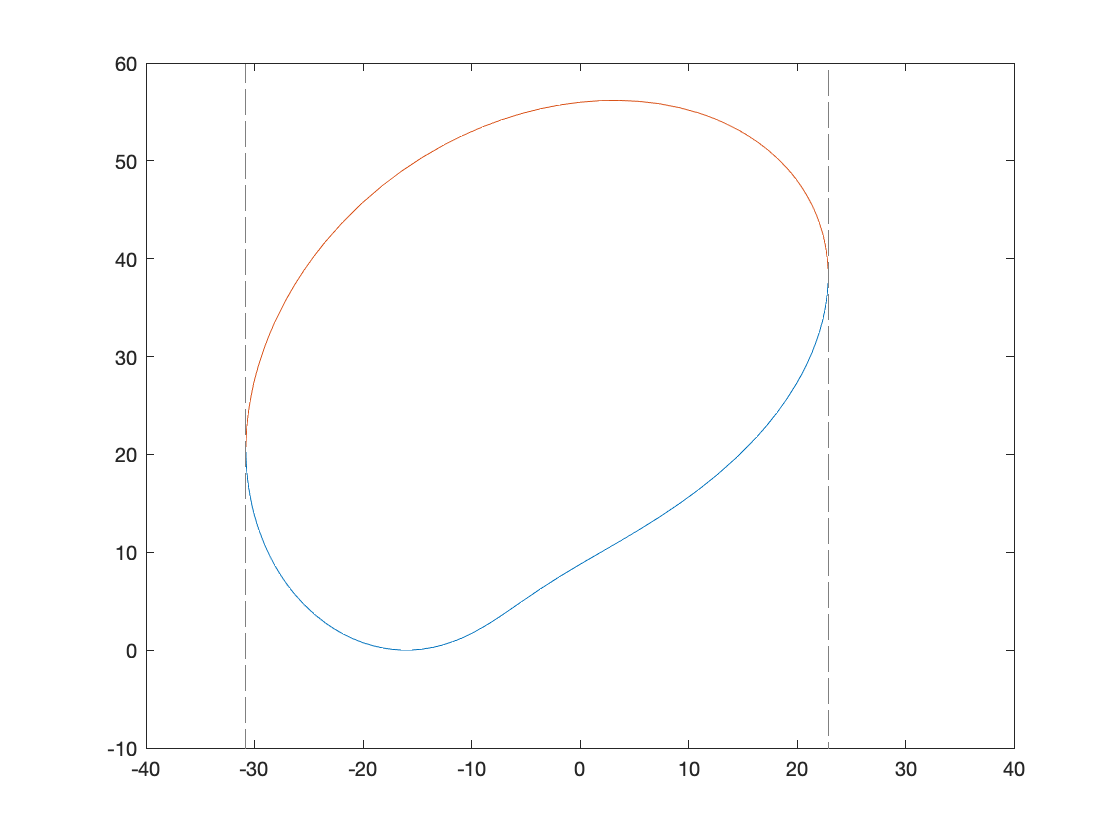}\\
				\centering
				\text{d. ellipse, $a = 3, b= 2, c_1 = 3, c_2 = 4$}\\
				\includegraphics[keepaspectratio, height=4cm]{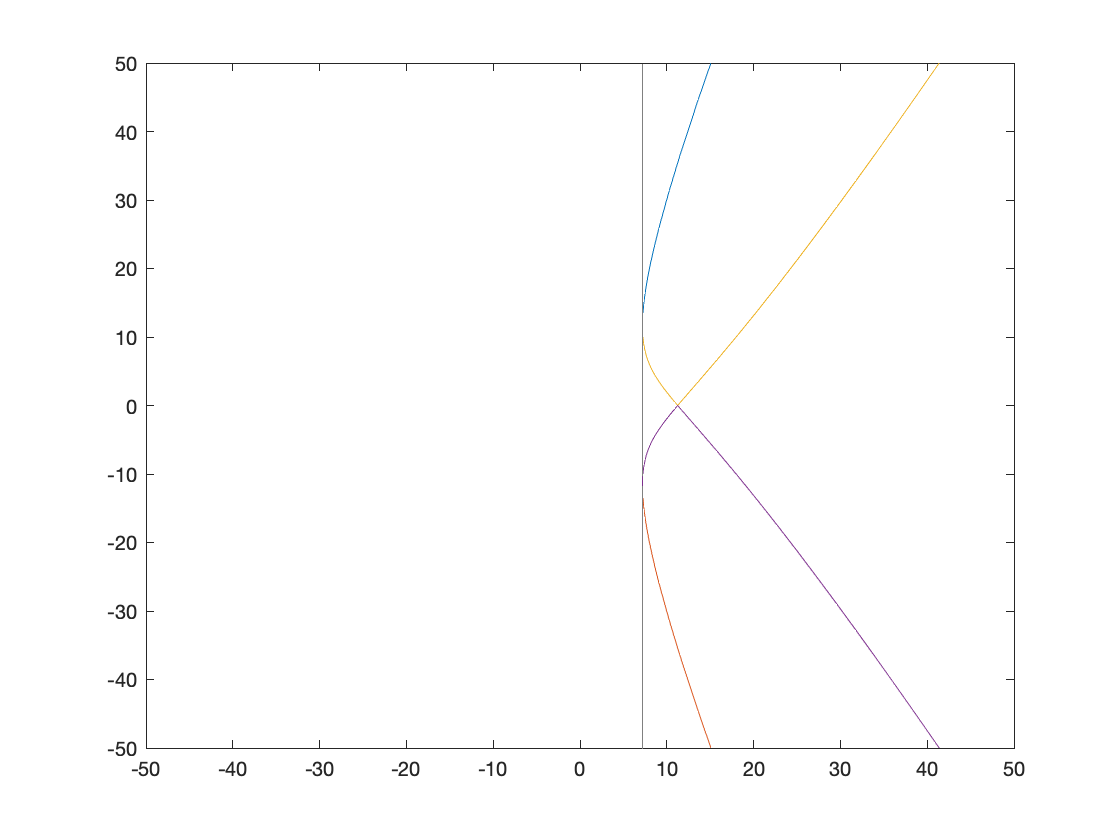}\\
				\centering
				\text{f. ellipse, $a = 3, b= 2, c_1 = 0, c_2 = 1$}
			\end{minipage}
		\end{tabular}
		\vspace{5mm}
		\centering
		\caption{transformed ellipses/hyperbolae by $z \mapsto z^2$}
			\label{fig:transfomed_conicsection_k2}
	\end{figure}
	{Notice that for a non-centrally symmetric curve, its centrally symmetric reflection is another branch of the pre-image of its image under the complex square mapping. Since centrally symmetric points are mapped to the same point under the complex square mapping which is locally a diffeomorphism, the image may thus have self-intersection points. For a non-centered ellipse, its image contains self-intersection points when the center of the ellipse is not too far away from the origin. The situation is exactly the contrary for a non-centered hyperbola: its image contains self-intersections points when its center is sufficiently far from the origin. } 
	
	{We shall provide a direct verification of the integrability of transformed curves when $s=0$ in Appendix \ref{sec: transformed_Jaochimsthal}.}
		
	{The law of reflections should also corresponds to each other through the conformal complex square mapping. This gives the following law of reflection at the target space of the mapping:  A particle is supposed to be reflected against the curve in the target space when the corresponding motions in the source space does so. Otherwise the particle just crosses the curve. Figure \ref{fig: law_reflection} illustrates this rule, in which the left picture shows what happens the source space, and the right picture in the target space. The dashed curve in the left picture is the centrally symmetric image of the reflection wall. A pair of centrally symmetric points lying in the reflection wall colored in green in the source space are mapped to one point which is an self-intersection point in the target space. A reflection point colored in red in the source space is mapped again to a reflection point in the target space, and vice versa. The blue points indicate where the particle just crosses without reflections in the source and the target spaces.}
	
	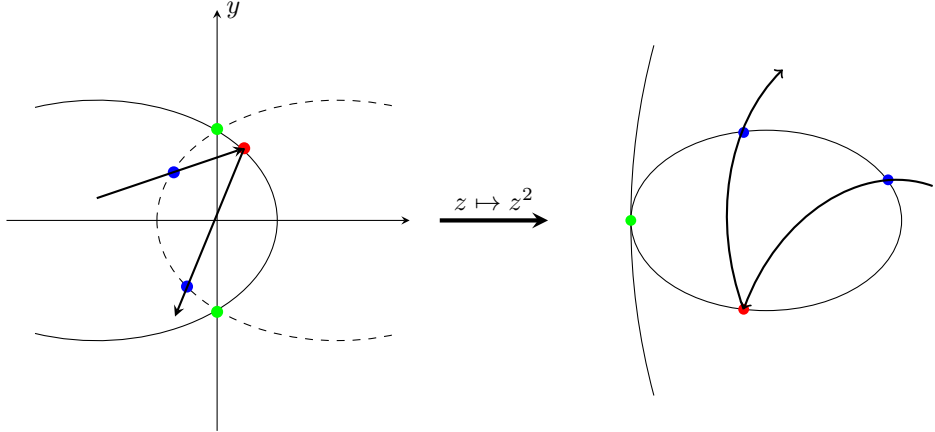
\begin{figure}
		\begin{tikzpicture}
		\begin{scope}[scale= 0.8]
		\draw[->,>=stealth] (-3.5,0)--(3.2,0); 
		\draw[->,>=stealth] (0,-3.5)--(0,3.5)node[right]{$y$}; 
		\draw[dashed](-1,0)arc(-180:-70:3 and 2);
		\draw[dashed](-1,0)arc(180:70:3 and 2);
		\draw[rotate=180](-1,0)arc(-180:-70:3 and 2);
		\draw[rotate=180](-1,0)arc(180:70:3 and 2);
		\coordinate (o) at (0, 0);
		\coordinate (A) at (0, 1.52);
		\fill[green] (A) circle (0.1);
		\coordinate (B) at (0, -1.52);
		\fill[green] (B) circle (0.1);
		\coordinate (R) at (0.45, 1.2);
		\fill[red] (R) circle (0.1);
		\coordinate (T1) at (-0.72, 0.8);
		\fill[blue] (T1) circle (0.1);
		\coordinate (T2) at (-0.5, -1.1);
		\fill[blue] (T2) circle (0.10);
		\draw[->,>=stealth,thick] (-2.0, 0.37)--(R);
		\draw[->,>=stealth,thick] (R)--(-0.7, -1.6);
		\draw[->,>=stealth, ultra thick](3.7,0) to node [above]{$z \mapsto z^2$} (5.5,0);
		\end{scope}
		\begin{scope}[xshift = 5.5cm, scale=0.6]
		\draw(0,0)arc(-180:180:3 and 2);
		\draw(0,0)arc(180:195:15);
		\draw(0,0)arc(-180:-195:15);
		\coordinate (O) at (0,0);
		\fill[green] (O) circle (0.12);
		\coordinate (R) at (2.5,-1.97);
		\fill[red] (R) circle (0.12);
		\coordinate (T1) at (5.7,0.9);
		\fill[blue] (T1) circle (0.12);
		\coordinate (T2) at (2.5,1.95);
		\fill[blue] (T2) circle (0.12);
		\draw[<-,rotate=-20, thick](R)arc(180:90:3 and 4);
		\draw[->,rotate=20,thick](R)arc(180:110:4 and 5);
		\end{scope}
		\end{tikzpicture}
		\caption{{the laws of reflections before and after the transformation induced by the complex square mapping}}
		\label{fig: law_reflection}
	\end{figure}

	{As in the case of the complex square mapping $z \mapsto z^2$, the two inverse branches are given respectively as
	\[
	z_1 =  \frac{q_2}{\sqrt{-2q_1 + 2\sqrt{q_1^2 + q_2^2}}}, ~ z_2 =  \frac{\sqrt{-2q_1 + 2\sqrt{q_1^2 + q_2^2}}}{2}
	\]
	and
	\[
	z_1 =  -\frac{q_2}{\sqrt{-2q_1 + 2\sqrt{q_1^2 + q_2^2}}}, ~ z_2 =  -\frac{\sqrt{-2q_1 + 2\sqrt{q_1^2 + q_2^2}}}{2}.
	\]}
	After eliminating {the} square roots in the equations, the quadratic curves given by the equation (\ref{eq:conicsec}) are mapped into some fourth-order equations of $q_1$ and $q_2$, so the transformed curve is more complicated and can be hard to identify via a more direct method. 
	
	\begin{figure}
		\centering
		\begin{tabular}{cc}
			\begin{minipage}[t]{0.5\hsize}
				\includegraphics[keepaspectratio, height=4cm]{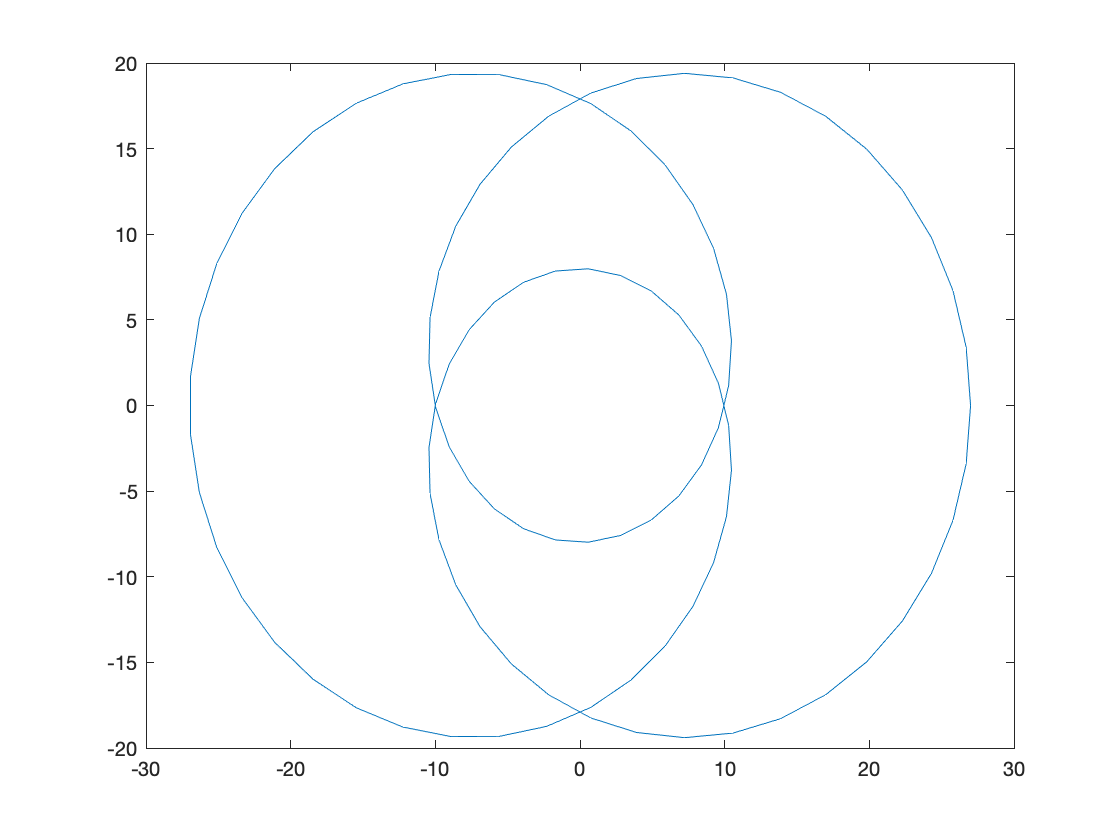}\\
				\vspace{-0mm}
				\centering
				\text{a. ellipse, $a = 3, b= 2, c_1 = 0, c_2 = 0$}
			\end{minipage}
			\begin{minipage}[t]{0.5\hsize}
				\includegraphics[keepaspectratio, height=4cm]{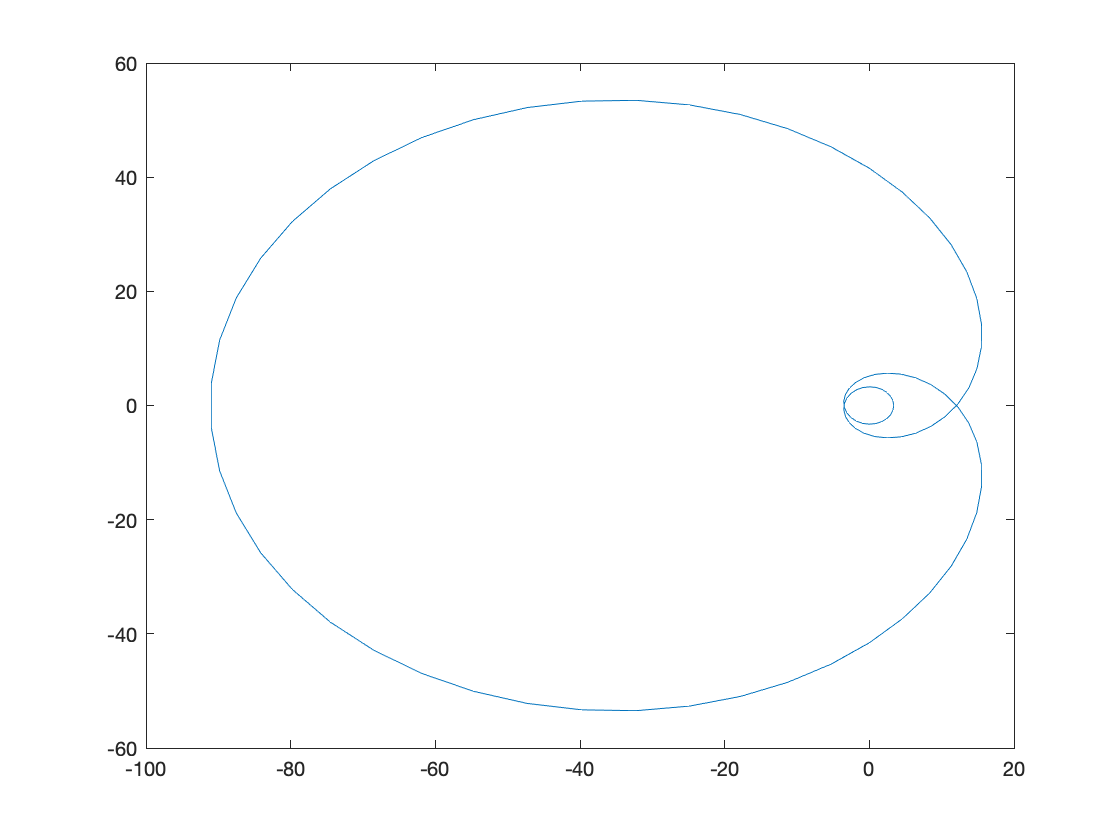}\\
				\vspace{-3.5mm}
				\centering
				\text{b. ellipse, $a = 3, b= 2, c_1 = 1.5, c_2 = 0$}
			\end{minipage}
		\end{tabular}
		\begin{tabular}{cc}
			\begin{minipage}[t]{0.50\hsize}
				\centering
				\includegraphics[keepaspectratio, height=4cm]{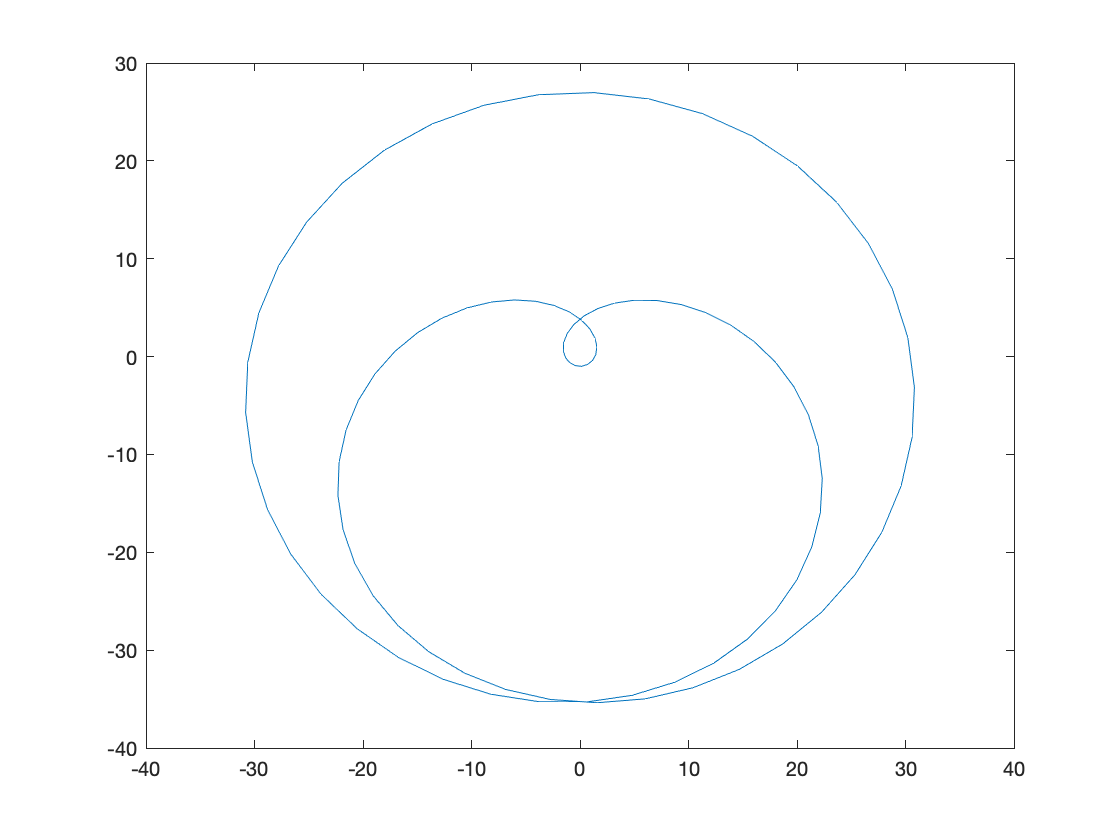}\\
				\centering
				\text{c. ellipse, $a = 3, b= 2, c_1 = 0, c_2 = 1$}\\
				\includegraphics[keepaspectratio, height=4cm]{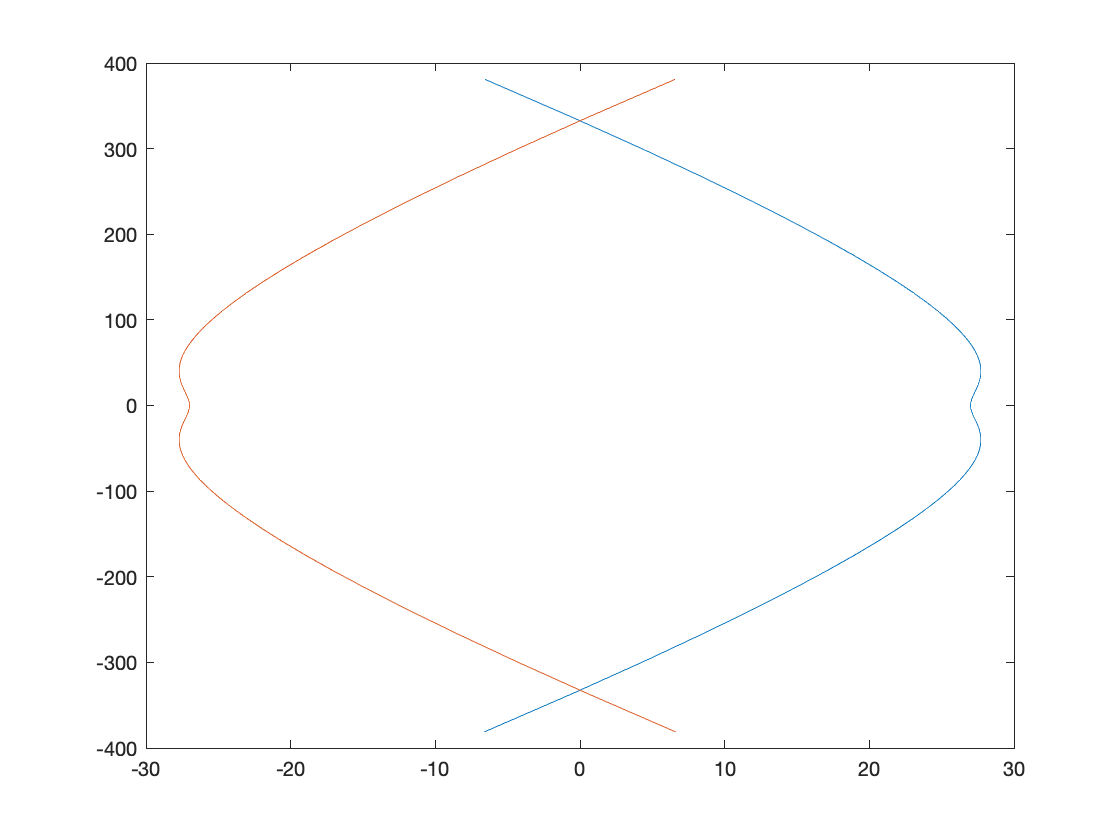}\\
				\centering
				\text{e. hyperbola, $a = 3, b= 2, c_1 = 0, c_2 = 0$}
				\includegraphics[keepaspectratio, height=4cm]{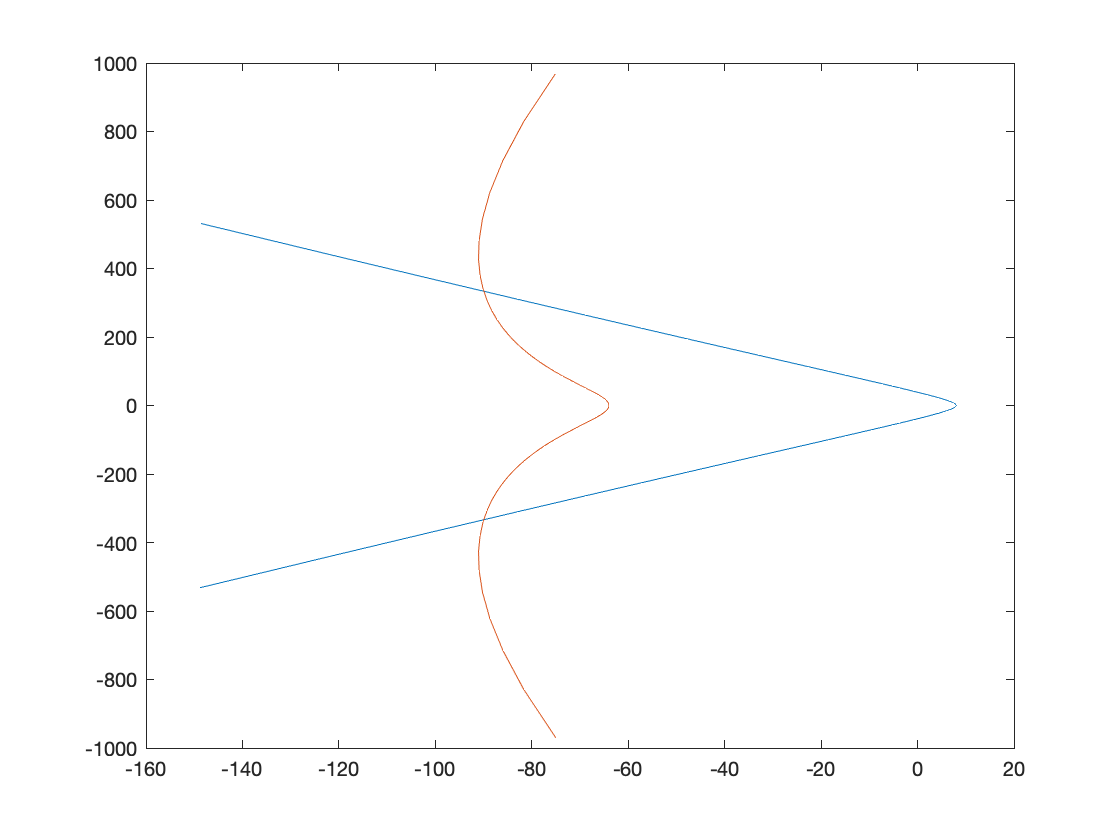}\\
				\centering
				\text{g. hyperbola, $a = 3, b= 2, c_1 = 1, c_2 = 0$}
			\end{minipage} &
			\begin{minipage}[t]{0.50\hsize}
				\centering
				\includegraphics[keepaspectratio, height=4cm]{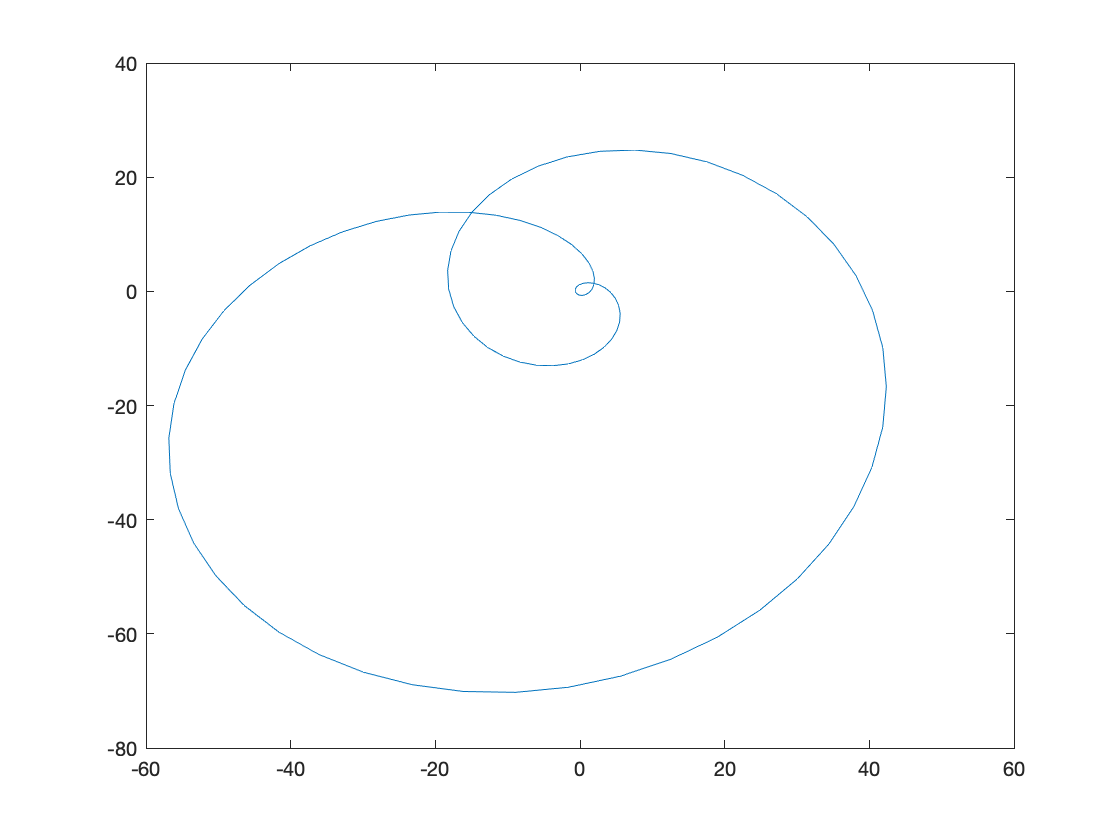}\\
				\centering
				\text{d. ellipse, $a = 3, b= 2, c_1 = 1, c_2 = 1$}\\
				\includegraphics[keepaspectratio, height=4cm]{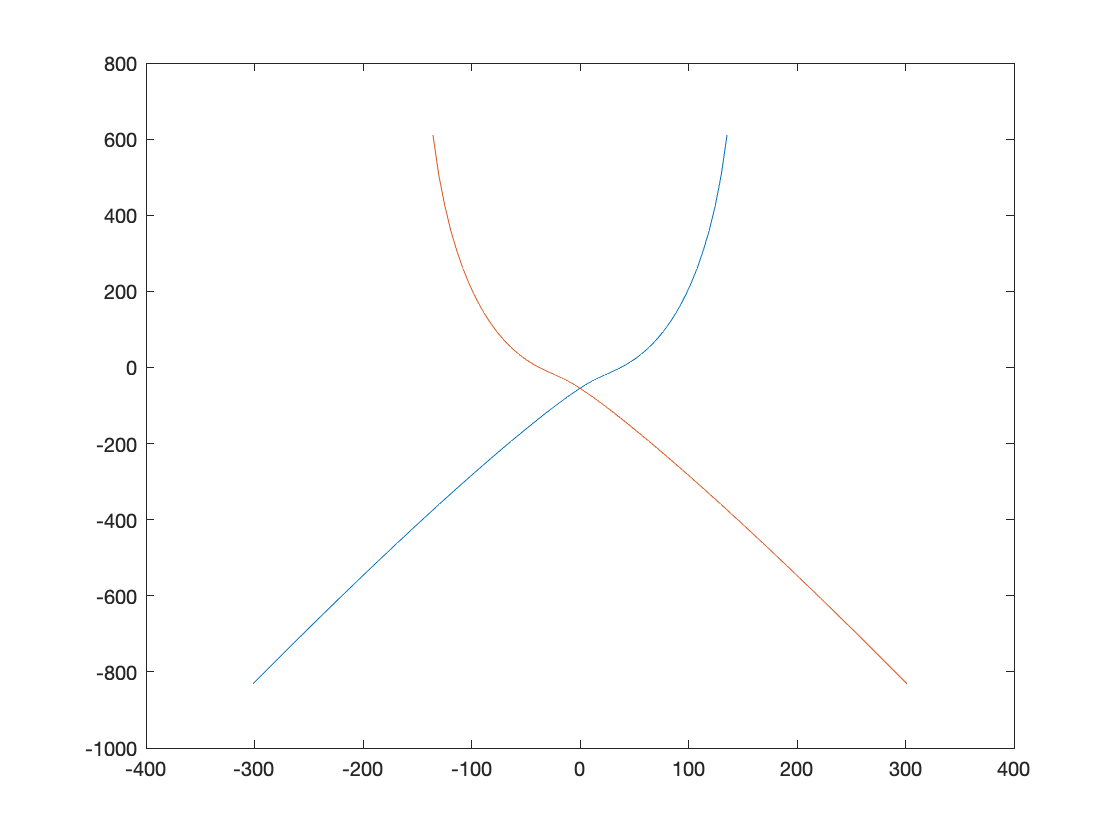}\\
				\centering
				\text{f. ellipse, $a = 3, b= 2, c_1 = 0, c_2 = 1$}
				\includegraphics[keepaspectratio, height=4cm]{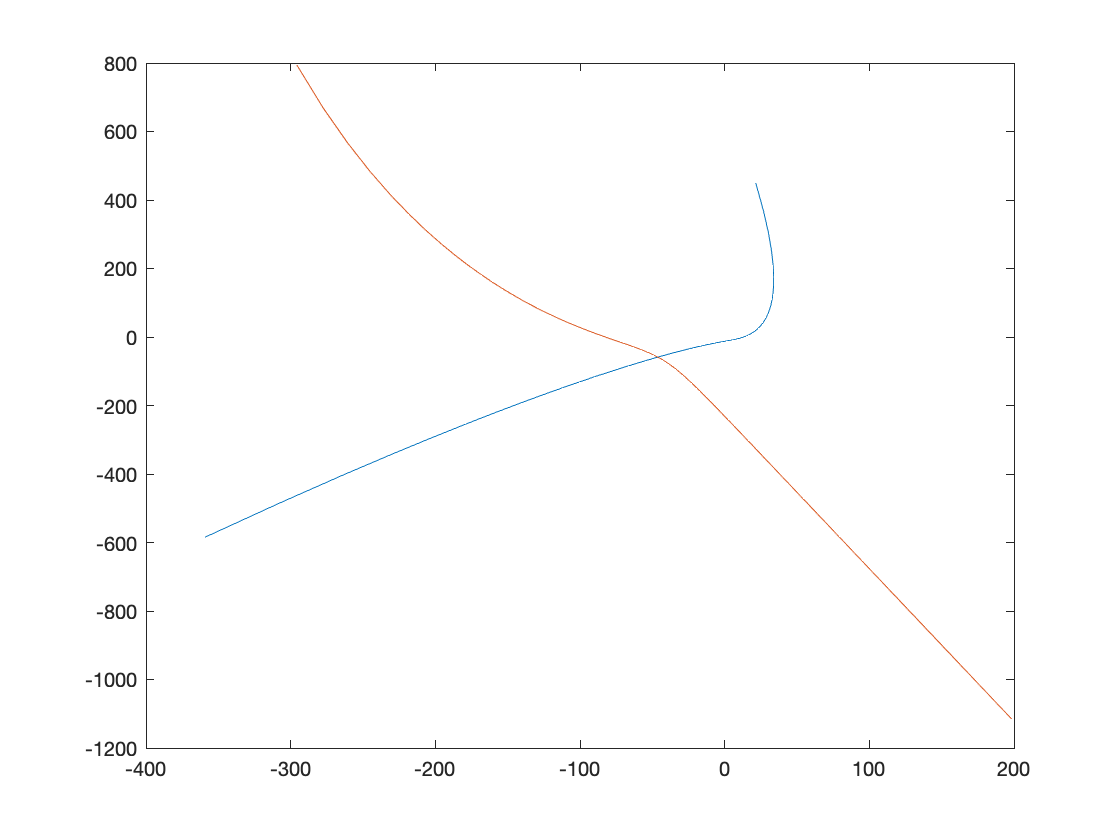}\\
				\centering
				\text{h. hyperbola, $a = 3, b= 2, c_1 = 1, c_2 = 1$}
			\end{minipage}
		\end{tabular}
		\vspace{5mm}
		\centering
		\caption{transformed ellipses/hyperbolae by $z \mapsto z^3$}
		\label{fig:transfomed_conicsection_k3}
	\end{figure}
	
	Next we consider the case $k=3$.
	Figure \ref{fig:transfomed_conicsection_k3} shows the image of ellipses/hyperbolae
	\[
	\frac{(z_1 -  c_1)^2}{a^2} \pm  \frac{(z_2 -  c_2)^2}{b^2}=1
	\]
	by the conformal mapping $z \mapsto  z^3=q$. One can see that the transformed curves from centered ellipses give four self-intersection points. As the center of an ellipse moves away from the origin, the number of self-intersection points is reduced to two{, and further to zero when the center is sufficiently far from the origin.} For hyperbolae, there always exist at least one self-intersection in the transformed curves. 
	
	
	{In the case of $fs \neq 0$, much less integrable reflection walls are known. In the next section we shall analyze the situation in the Hooke and Kepler problems.}
	
	\section{{Hooke, Kepler Billiards and their Dualities}}
	\label{sec: duality_Hook_Kepler}
	
	{Recall that by Hooke problem we refer to the mechanical system defined in the plane with force function $ f r^{2}$, in which $r$ is the distance of the particle to the center and $f\neq0$ is a real parameter. The force field is attractive/repulsive when $f$ is negative/positive. We accept both cases. Similarly by Kepler problem we refer to the mechanical system define in the plane with force function $s r^{-1}$ in which $s \neq 0$ is a real parameter. Again we accept both signs of $s$.}
	
	\subsection{{Integrable Hooke Billiards}}
	{Billiard systems defined with the Hooke problem is relatively well-studied and the following integrable reflection walls are known: Centered ellipses \cite{Jacobi Vorlesung}\cite{Fedorov}, lines, and combinations of confocal centered conic sections \cite{Pustovoitov2019}\cite{Pustovoitov2021}. Note that we call a conic section centered, when its center is at the origin. } 
	In the following theorem we provide a direct verification for the integrability of centerred conic section reflection walls in Hooke billiards.
	\begin{theorem}
		\label{thm: centered_conic}
		The attractive/repulsive Hooke billiard {$$(\mathbb{C}, g_{flat}, {-f |z|^2}, \mathcal{B})$$}
		with $\mathcal{B}$ being a centered conic section reflection wall is integrable.
	\end{theorem}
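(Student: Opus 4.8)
The plan is to exhibit an explicit polynomial first integral $G$ of the Hooke problem that is in addition invariant under the reflections at $\mathcal{B}$; since $G$ will be a genuine first integral on all of $T^*\mathbb{C}$, integrability follows at all energies. As the Hooke problem is invariant under rotations about the center $O$, and a rotation carries one centered conic section to another, I would first rotate $\mathcal{B}$ into the normal form $\mathcal{B}=\{\,C_1q_1^2+C_2q_2^2=1\,\}$ with $C_1,C_2\in\mathbb{R}$. The generic case has $C_1,C_2$ nonzero and distinct — a centered ellipse or hyperbola that is not a circle — and I treat the circle ($C_1=C_2$) and the line through $O$ as easy special cases at the end. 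Write $M=\mathrm{diag}(C_1,C_2)$. The Hooke Hamiltonian is $H=\tfrac12|p|^2+f|q|^2$, with $\dot q=p$, $\dot p=-2fq$; besides $H$ its flow conserves the angular momentum $L=q_1p_2-q_2p_1$ and the entries of the Fradkin tensor $F_{ij}=p_ip_j+2fq_iq_j$, both of which are one-line computations.

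My candidate is
\[
G\;:=\;C_1F_{11}+C_2F_{22}-C_1C_2\,L^2 .
\]
Being a polynomial in $F_{11},F_{22},L$, it is automatically conserved along the Hooke flow, and it is functionally independent of $H$, since $G-2C_2H=(C_1-C_2)F_{11}-C_1C_2L^2$ is non-constant on energy levels whenever $C_1\neq C_2$.

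The heart of the proof is the invariance of $G$ under an elastic reflection at a point $q\in\mathcal{B}$. Such a reflection fixes $q$ and sends the incoming momentum $p^-$ to $p^+=p^--2\dfrac{\langle p^-,Mq\rangle}{\langle Mq,Mq\rangle}Mq$, because the Euclidean normal to $\mathcal{B}$ at $q$ is parallel to $Mq$. The key observation is that on $\mathcal{B}$ one has $\langle Mq,q\rangle=C_1q_1^2+C_2q_2^2=1$, so the potential part of $C_1F_{11}+C_2F_{22}$ is the constant $2f$ there, and hence $G|_{\mathcal{B}}$ differs by the additive constant $2f$ from $C_1p_1^2+C_2p_2^2-C_1C_2L^2$ — which is (a nonzero multiple of) the Joachimsthal integral of the \emph{free} billiard in the conic $\mathcal{B}$, cf. Appendix~\ref{sec: Joachimsthal}. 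Since an elastic reflection is precisely the Euclidean reflection across the tangent line of $\mathcal{B}$ at $q$, i.e. the free reflection law, it preserves that free integral, and therefore preserves $G$ at points of $\mathcal{B}$; note that the sign of $f$ is immaterial here, as only the $f$-free part $C_1p_1^2+C_2p_2^2-C_1C_2L^2$ has to be verified. (Alternatively one checks this directly: substituting $p^+$ into $G$ and simplifying, the difference $G(q,p^+)-G(q,p^-)$ factors through $\langle p^-,Mq\rangle$ and the remaining terms cancel by virtue of the identity $C_1q_1^2+C_2q_2^2\equiv1$ on $\mathcal{B}$.)

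Combining the two parts, $G$ is a nontrivial first integral of $(\mathbb{C},g_{flat},-f|z|^2,\mathcal{B})$, which is the claim. For a centered circle ($C_1=C_2$) the reflection preserves $L$ itself, and for a line through $O$ it reverses the sign of $L$; in both cases $L^2$ is a first integral of the central force flow that is independent of $H$, so these cases are immediate. I expect the only delicate point to be the reflection computation of the previous paragraph — in particular getting the sign of the $L^2$-term right so that the error terms cancel — together with the invocation of the free-billiard integral, whose own verification I would relegate to Appendix~\ref{sec: Joachimsthal}; the reduction to normal form, the conservation along the flow, and the independence from $H$ are all routine.
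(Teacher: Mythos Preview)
Your proof is correct and arrives at the same first integral as the paper (up to adding a multiple of $H$ and rescaling), but the verification of reflection invariance is organized differently. The paper takes an undetermined linear combination $k_1(2fz_1^2+w_1^2)+k_2(2fz_2^2+w_2^2)+L^2$, computes $\tilde G(z,w)-\tilde G(z,w')$ by brute force, and solves for $k_1-k_2$; you instead fix the coefficients from the outset as $C_1,C_2,-C_1C_2$ and observe that on $\mathcal B$ the potential part collapses to the constant $2f$, so that $G|_{\mathcal B}-2f=C_1p_1^2+C_2p_2^2-C_1C_2L^2$, which you identify with the \emph{square} of the Joachimsthal quantity $J=\langle p,Mq\rangle$ (not $J$ itself --- your phrase ``a nonzero multiple of'' is slightly off, but the conclusion stands since $J\mapsto -J$ under a single elastic reflection, hence $J^2$ is preserved). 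Your route is more conceptual and immediately explains \emph{why} those particular coefficients work, and it makes the link to the free billiard transparent from the start; the paper only recovers this link a posteriori in Section~\ref{sec: duality_Hook_Kepler}.3. The paper's direct computation, on the other hand, is self-contained and does not need to invoke Appendix~\ref{sec: Joachimsthal}. One small caveat: Appendix~\ref{sec: Joachimsthal} proves conservation of $J$ between \emph{consecutive} reflection points of the free billiard, whereas you need invariance of $J^2$ at a \emph{single} reflection; this is the trivial observation $J\mapsto -J$ just mentioned, so your parenthetical direct check is really the operative step here.
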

	\begin{proof}
		By {using the} rotational symmetry, we can write a centered conic ellipses as 
		\[
		F:=\frac{z_1^2}{a^2} + \frac{z_2^2}{b^2} -1=0.
		\]
		{Without loss of generality we can assume that $a\geq b$}.
		Let {$w:=(w_1,w_2)$} and {$w'=(w_1',w_2')$} denote {the linear} momenta, respectively, before and after the reflection against this centered elliptic reflection wall. 
		Set 
	         $F_1 := \partial F / \partial z_1$ and $F_2 := \partial F / \partial z_2$.
			
			{The normal vector at a point on $F=0$ is given by } 			\[
			n:=( F_1, F_2 ).
						\]
			{The normal component of $w$ is thus}
						\[
			{w_n:= \frac{w \cdot n}{|n|^2} n= \left( \frac{(w_1 F_1 + w_2 F_2 )F_1}{F_1^2 + F_2^2},  \frac{(w_1 F_1 + w_2 F_2 )F_2}{F_1^2 + F_2^2}   \right)}
			\]
			From the law of elastic reflection, the momenta after the reflection {$w'$} is 
			\[
			w' = w - 2w_n,
			\]
			{that is}
			\begin{align*}
			w_1' &= w_1 - \frac{2(w_1 F_1 + w_2F_2)F_1}{F_1^2 + F_2^2}\\
			&  =\frac{a^4 w_1 z_2^2 -2 a^2 b^2 w_2 z_1 z_2 - b^4 w_1 z_1^2}{a^4 z_2^2 + b^4 z_1^2},
			\end{align*}
			\begin{align*}
			w_2' &=w_2 - \frac{2(w_1 F_1 + w_2F_2)F_2}{F_1^2 + F_2^2}\\
			&=\frac{-a^4 w_2 z_2^2 -2 a^2 b^2 w_1 z_1 z_2 + b^4 w_2 z_1^2}{a^4 z_2^2 + b^4 z_1^2}.
			\end{align*}
		
		{We now search for possible first integrals of this Hooke billiard system.}
		{The Hamiltonian of the system is $|w|^2/2 + {f |z|^2}$, which admits three independent first integrals from its separability and rotational symmetry:}
		\[
		{2 f z_1^2 + w_1^2, \quad 2 f z_2^2 + w_2^2, \quad z_1 w_2 - z_2 w_1.}
		\]
		{So any combinations of (functions of) these first integrals are again an first integral.} We set
		\[
		{{\tilde{G}}(z_1,z_2,w_1,w_2) := k_1(2 f z_1^2 + w_1^2) + k_2 (2 f z_2^2 + w_2^2) + (z_1 w_2 - z_2 w_1)^2}
		\]
		{in which we square the angular momentum in order to have a quadratic function on $(w_{1}, w_{2})$ and $k_{1}, k_{2} \in \mathbb{R}$ are coefficients to be determined. }
		After the reflection at the reflection wall, $(z_1,z_2,w_1,w_2) $ is mapped to $(z_1,z_2, w'_1,w'_2)$.  {The} difference between the values {of ${\tilde{G}}$} before and after the reflection becomes 
		\begin{align*}
		&{\tilde{G}}(z_1,z_2,w_1,w_2) - {\tilde{G}}(z_1,z_2, w'_1,w'_2) = \\
		&\frac{4 z_1 z_2 (a^4 w_1 w_2 z_2^2 + a^2 b^2 w_1^2 z_1 z_2 - a^2 b^2 w_2^2 z_1 z_2 -b^4 w_1 w_2 z_1^2)}{a^4 z_2^2 + b^4 z_1^2 } \times\\
		&\frac{a^4 z_2^2 + a^2 b^2 z_1^2 - a^2 b^2 z_2^2 -b^4 z_1^2 - a^2 b^2 k_1 + a^2 b^2 k_2}{a^4 z_2^2 + b^4 z_1^2 }.
		\end{align*}
		Set 
		\[
		Z(z_1,z_2) := a^4 z_2^2 + a^2 b^2 z_1^2 - a^2 b^2 z_2^2 -b^4 z_1^2 - a^2 b^2 k_1 + a^2 b^2 k_2.
		\]
		{This is a factor in the numerator which does not depend on the momenta, and therefore its nullity implies preservation of $G$ under reflections.}
		
		{We deduce from $F=0$ that }
		\[
		Z = a^2 b^2 (a^2 - b^2 - k_1  +k_2).
		\] 
		Thus, $Z$ equals to $0$ if and only if 
		\begin{equation}
		\label{eq: condition1_k1k2}
		 k_1 -k_2=a^2- b^2 =
		 	a^{2} e^{2} \quad 
		\end{equation}
		in which $e$ is the eccentricity.
		 {After normalizing the coefficients, we thus get the following expression of the additional first integral as}
	         \begin{equation}
	         \label{eq: firstint_G}
	         {{G(z_1,z_2,w_1,w_2):=\dfrac{a^{2} e^{2}}{1+a^{2} e^{2}} (2f z_1^2 + w_1^2)+\dfrac{1}{1+a^{2} e^{2}}(z_1 w_2 - z_2 w_1)^2.}}
	         \end{equation}
		
		{The case of a centered hyperbola given by 
			\[
			\frac{z_1^2}{a^2} - \frac{z_2^2}{b^2} -1=0
			\]
		can be treated similarly and we get the condition on $k_1$ and $k_2$ as
		\begin{equation}
		\label{eq: condition2_k1k2}
		k_1-k_2 = a^2 + b^2= a^2 e^2.
		\end{equation}
		Thus, the first integral has 
		the same formula (\ref{eq: firstint_G}). 
	}
		
		 \end{proof}
	 
	 {From the conditions (\ref{eq: condition1_k1k2}) and (\ref{eq: condition2_k1k2}) on the coefficients $k_1$ and $k_2$, we can immediately deduce the integrability of reflection walls consist of confocal ellipses and hyperbolae.} 
	 \begin{cor}
	 	\label{cor: confocal_Hooke}
	{The attractive/repulsive Hooke billiard {$$(\mathbb{C}, g_{flat}, {-f |z|^2}, \mathcal{B})$$}
	with $\mathcal{B}$ being any combination of confocal {centered} ellipses and hyperbolae is integrable.} 
	 \end{cor}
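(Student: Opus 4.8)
The plan is to observe that Theorem \ref{thm: centered_conic} produces, for each individual conic, a first integral whose expression depends only on the one geometric datum shared by confocal conics: the distance from the center to the foci. Recall that for a centered ellipse $z_1^2/a^2 + z_2^2/b^2 = 1$ with $a \ge b$ the foci lie at $(\pm c, 0)$ with $c^2 = a^2 - b^2 = a^2 e^2$, while for a centered hyperbola $z_1^2/a^2 - z_2^2/b^2 = 1$ they lie at $(\pm c, 0)$ with $c^2 = a^2 + b^2 = a^2 e^2$. Hence both coefficient conditions \eqref{eq: condition1_k1k2} and \eqref{eq: condition2_k1k2} read $k_1 - k_2 = c^2$, with $c$ the common semi-focal distance, so after substituting $a^2 e^2 = c^2$ the normalized first integral \eqref{eq: firstint_G} becomes
\[
G_c(z_1, z_2, w_1, w_2) = \frac{c^2}{1+c^2}\,(2f z_1^2 + w_1^2) + \frac{1}{1+c^2}\,(z_1 w_2 - z_2 w_1)^2,
\]
the \emph{same} function for every member of the confocal family with foci $(\pm c, 0)$, whether that member is an ellipse or a hyperbola.

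From here the argument is short. First I would note that $G_c$ is a first integral of the Hooke flow itself, being a linear combination of $2f z_1^2 + w_1^2$ and $(z_1 w_2 - z_2 w_1)^2$, both of which are first integrals of the Hamiltonian $|w|^2/2 + f|z|^2$ by separability and rotational symmetry, exactly as used in the proof of Theorem \ref{thm: centered_conic}; thus $G_c$ is conserved along every arc of a billiard trajectory between consecutive reflections. Second, the computation in that proof shows that the jump of $G_c$ across a reflection at a conic of the confocal family vanishes precisely because the constraint $k_1 - k_2 = c^2$ is met — and it is met simultaneously by \emph{all} confocal members. Consequently, if $\mathcal{B}$ is any finite union of confocal centered ellipses and hyperbolae with common foci $(\pm c, 0)$, the single function $G_c$ is invariant under reflection at each component of $\mathcal{B}$ and conserved by the flow, hence is a first integral of $(\mathbb{C}, g_{flat}, -f|z|^2, \mathcal{B})$. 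It remains to record, as in Theorem \ref{thm: centered_conic}, that $G_c$ is functionally independent of the energy $E = |w|^2/2 + f|z|^2$, since $2f z_1^2 + w_1^2$ is not expressible through $E$ and the angular momentum alone, so $dG_c$ and $dE$ are generically independent on $T^*(\mathbb{C} \setminus O)$. Assembling these points gives the integrability claimed in the corollary.

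There is no genuine obstacle here beyond the bookkeeping remark that makes the corollary work: the defining property of confocality is exactly what forces the coefficient constraint in Theorem \ref{thm: centered_conic} to be uniform across the whole family, so one and the same $G_c$ serves all the reflection walls at once. If one also wishes to admit degenerate members of the confocal pencil (the two coordinate axes), one checks either directly or by continuity of the condition $k_1 - k_2 = c^2$ that $G_c$ is still preserved across reflections at them, though this is not needed for the stated result. All computational content is already contained in the proof of Theorem \ref{thm: centered_conic}.
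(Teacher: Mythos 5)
Your proposal is correct and follows essentially the same route as the paper: both reduce the two coefficient conditions $k_1-k_2=a^2-b^2$ (ellipse) and $k_1-k_2=a^2+b^2$ (hyperbola) to the single condition $k_1-k_2=c^2$ with $c$ the common semi-focal distance, yielding one first integral $G_c$ valid for every member of the confocal family. The extra remarks on conservation along the flow and independence from the energy are already implicit in Theorem \ref{thm: centered_conic} and do not change the argument.
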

 \begin{proof}
 	{Consider confocal centered ellipses in the form of
 	\[
 	\frac{z_1^2}{a^2} + \frac{z_2^2}{a^2 - c^2}=1, \quad a>c,
 	\]
 	and confocal centered hyperbolae in the form of 
 	\[
 	\frac{z_1^2}{b^2} - \frac{z_2^2}{ c^2- b^2}=1, \quad 0<b<c.
 	\]
 	For both cases, we obtain the same condition on the coefficients $k_1$ and $k_2$ such as
 	\[
 	k_1 - k_2 = c^2,
 	\]
 	thus there exists a common first integral given by 
 	\[
 	{G(z_1,z_2, w_1,w_2):= \dfrac{c^2}{1+c^2} (2f z_1^2 + w_1^2)+\dfrac{1}{1+c^{2}}(z_1 w_2 - z_2 w_1)^2.}
 	\]}
\end{proof}
	
		 {We note that by a direct limiting procedure we also get the integrability of the Hooke billiard with a line or any combination of parallel/perpendicular lines as an integrable boundary. }
		 \begin{cor}
		 	\label{cor: lines_Hooke}
		 	{The attractive/repulsive Hooke billiard {$$(\mathbb{C}, g_{flat}, {-f |z|^2}, \mathcal{B})$$}
		 	with $\mathcal{B}$ being {any combination of parallel/perpendicular lines is integrable. }} 
		 \end{cor}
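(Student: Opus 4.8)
The plan is to deduce Corollary~\ref{cor: lines_Hooke} from Theorem~\ref{thm: centered_conic} by degenerating a centered conic into a pair of parallel lines, as hinted in the remark just above the statement. First, using the rotational symmetry of the Hooke force function $-f|z|^{2}$, I would rotate coordinates so that every line occurring in $\mathcal B$ is either vertical ($z_{1}=\text{const}$) or horizontal ($z_{2}=\text{const}$); this reduces the problem to axis-aligned families.

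Next I would treat a single pair of vertical lines $z_{1}=\pm a$. The idea is to realize them as the limit, uniformly on compact subsets of $\mathbb C$, of the centered hyperbolae $z_{1}^{2}/a^{2}-z_{2}^{2}/b^{2}=1$ as $b\to\infty$; along the way one checks that the outward normal at a hyperbola point with bounded $z_{2}$ tends to the horizontal direction, so that the elastic reflection law at the hyperbola converges to the one at the two vertical lines. For this hyperbola the eccentricity satisfies $a^{2}e^{2}=a^{2}+b^{2}\to\infty$, so in the first integral (\ref{eq: firstint_G}) the weights obey $\tfrac{a^{2}e^{2}}{1+a^{2}e^{2}}\to 1$ and $\tfrac{1}{1+a^{2}e^{2}}\to 0$; hence $G$ degenerates to
\[
G_{\mathrm v}:=2f z_{1}^{2}+w_{1}^{2}.
\]
It is then immediate to verify directly that $G_{\mathrm v}$ is conserved along the Hooke flow (the system separates in $z_{1}$ and $z_{2}$) and is left unchanged by the reflection at any vertical line, which fixes the position, hence $z_{1}$, and merely sends $w_{1}\mapsto-w_{1}$. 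A single line through the center is the special case $a=0$, for which $G_{\mathrm v}=w_{1}^{2}$.

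For an arbitrary finite combination of vertical and horizontal lines I would then observe that this same $G_{\mathrm v}$ serves as a common first integral: reflection at a vertical line preserves it as just noted, while reflection at a horizontal line changes only $w_{2}$ and therefore leaves $z_{1}$ and $w_{1}$, hence $G_{\mathrm v}$, untouched; and $G_{\mathrm v}$ is plainly functionally independent of the energy $H=\tfrac12|w|^{2}+f|z|^{2}$. (Equivalently one may use $2f z_{2}^{2}+w_{2}^{2}$.) This gives integrability in all the asserted cases.

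The argument is short and involves no genuinely hard step; the only point demanding care is the limiting procedure itself, namely checking that as $b\to\infty$ not just the hyperbola but also its normal/reflection law and the renormalized first integral all converge, so that the conserved quantity really survives in the limit. Once this is granted, the combination of walls is handled immediately by the separated structure of the Hooke problem, and in fact one can bypass the limit entirely and simply verify the invariance of $G_{\mathrm v}$ by hand.
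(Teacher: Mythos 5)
Your proposal is correct and follows essentially the same route as the paper: the paper's proof is precisely to let $e\to\infty$ in the first integral (\ref{eq: firstint_G}), obtaining $w_1^2+2fz_1^2$, and to observe its invariance under reflections at lines parallel to the $z_1$- or $z_2$-axis. Your added care about the convergence of the reflection law, and your observation that one can bypass the limit and verify invariance of $2fz_1^2+w_1^2$ directly via separability, only make explicit what the paper leaves implicit.
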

	\begin{proof}
		{When we take a limit $e \to \infty$ in the first integral (\ref{eq: firstint_G}), we obtain the form:		
		\[
		{w_1^2 + 2f z_1^2}
		\]
		which is invariant under {reflections} against a line which is parallel to $z_1-$ or $z_2-$axis, {as well as for any {combinations} of lines which are parallel to the $z_1-$ or $z_2-$axis. }
		}
	\end{proof}
	
	\begin{rem}
		The Hooke billiard also allows other integrable reflection walls. As a more or less trivial example, any combination of lines passing through the center are integrable with the first integral $(z_1 w_2 - z_2 w_1)^2$.
	\end{rem}

	\subsection{{Integrable Kepler Billiards}}
	{{As opposed to the study of Hooke billiards,} the study of Kepler billiard, on the other hand, seems to be rather recent. In \cite{Boltzmann}, L. Boltzmann considered the billiard system of a central force problem in the plane, {which includes the Kepler problem, }with a line as wall of reflection. He asserted that {any billiard system} thus obtained is ergodic.  Recently Gallavotti-Jauslin \cite{Gallavotti-Jauslin} disproved this assertion in the case that the central force problem is the Kepler problem, {by actually showing the integrability of the corresponding billiard system.} This integrability is revisited together with an in-detailed analysis on its integrable dynamics in Felder \cite{Felder}. An alternative proof of this integrability based on projective invariance of the Kepler problem is provided in \cite{Zhao}}. 
	
	{We now make a revisit of the first integral of Gallavotti-Jauslin $G$ {using} the complex square mapping, thus provide yet another alternative proof for the integrability of Gallavotti-Jauslin \cite{Gallavotti-Jauslin} {as well as} some extensions.}
	
	\begin{lemma}
		\label{lem: Gallavotti-Jauslin}
		{The additional first integral
		\[
		{G(z_1,z_2,w_1,w_2):=\dfrac{a^{2} e^{2}}{1+a^{2} e^{2}} (2f z_1^2 + w_1^2)+\dfrac{1}{1+a^{2} e^{2}}(z_1 w_2 - z_2 w_1)^2}
		\]
		 given in Theorem \ref{thm: centered_conic} {of the Hooke billiard $(\mathbb{C}, g_{flat},  {-f |z|^2}, \mathcal{B})$}
		  is transformed, {after multiplying by $(1+a^{2} e^{2})$,} under the complex square mapping $\mathbb{C} \setminus O \to \mathbb{C} \setminus O,  z \mapsto z^2$} into {Gallavotti-Jauslin's first integral}
		 \begin{align*}
		 A(p_1,p_2,q_1,q_2) := {(p_1 q_2 - p_2 q_1)^2 }
		 - 2 \tilde{a} \left((-p_1 q_2 + p_2 q_1)p_2 - \frac{{s} q_1}{\sqrt{q_1^2 + q_2^2}}\right)
		 \end{align*}
		{in which {$\tilde{a}=a^{2} e^{2}/2$}, {on the $-f$-energy hypersurface} of the Kepler problem
		$
		\left(\mathbb{C} \backslash O, g_{flat}, \frac{s}{ {|q|}} \right) 
		$.
		}
	\end{lemma}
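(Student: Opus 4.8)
The plan is to carry out the substitution dictated by the cotangent lift of $\phi: z \mapsto q = z^{2}$ directly into the normalized Hooke first integral $(1+a^{2}e^{2})\,G = a^{2}e^{2}\,(2fz_{1}^{2}+w_{1}^{2}) + (z_{1}w_{2}-z_{2}w_{1})^{2}$ and show the result equals $A(p,q)$ on the energy level $\{|w|^{2}/2 + f|z|^{2} = -s\}$, which by Theorem~\ref{thm: duality} with $k=2$ (and after the constant rescaling of the momentum used there) corresponds to the Kepler energy level $\{|p|^{2}/2 + s/|q| = -f\}$, i.e.\ the $-f$-energy hypersurface. First I would record the lift in complex form, $q = z^{2}$, $p = w/\overline{z}$, and its real-coordinate version (the explicit formulas $q_{1}=z_{1}^{2}-z_{2}^{2}$, $q_{2}=2z_{1}z_{2}$, $p_{1}=(z_{1}w_{1}-z_{2}w_{2})/(z_{1}^{2}+z_{2}^{2})$, $p_{2}=(z_{1}w_{2}+z_{2}w_{1})/(z_{1}^{2}+z_{2}^{2})$ already displayed in the excerpt).

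The key observation that makes this tractable is that angular momentum transforms simply under the lift: a short computation gives $p_{1}q_{2}-p_{2}q_{1} = -2(z_{1}w_{2}-z_{2}w_{1})\cdot\tfrac{?}{?}$ — more precisely one checks $q_{1}p_{2}-q_{2}p_{1}$ is, up to the factor $2$ coming from $q=z^{2}$, the Hooke angular momentum, so that the $(z_{1}w_{2}-z_{2}w_{1})^{2}$ term maps to a multiple of $(p_{1}q_{2}-p_{2}q_{1})^{2}$, accounting for the leading term of $A$. Then I would expand the $a^{2}e^{2}(2fz_{1}^{2}+w_{1}^{2})$ term: writing $2fz_{1}^{2}+w_{1}^{2}$ in terms of $q,p$, the piece $w_{1}^{2}$ produces, after using $z_{1}^{2}+z_{2}^{2}=|z|^{2}=|q|^{1/2}$ and $z_{1}^{2}-z_{2}^{2}=q_{1}$, a combination that on the energy surface rearranges into the ``$(-p_{1}q_{2}+p_{2}q_{1})p_{2}$'' term of $A$ together with a $|p|^{2}$-type contribution; the piece $2fz_{1}^{2}$ combines with that $|p|^{2}$ contribution via the energy relation $|p|^{2}/2 = -f - s/|q|$, converting the residual $f$-dependence into the Kepler potential term $s q_{1}/\sqrt{q_{1}^{2}+q_{2}^{2}}$. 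Matching constants fixes $\tilde a = a^{2}e^{2}/2$, as claimed.

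Concretely the steps are: (i) state the lift and the two inverse branches; (ii) verify the angular-momentum identity relating $z_{1}w_{2}-z_{2}w_{1}$ to $q_{1}p_{2}-q_{2}p_{1}$; (iii) express $w_{1}^{2}$ and $z_{1}^{2}$ through $q,p$ using $z_{1} = \sqrt{(|q|+q_{1})/2}$, $z_{2}=q_{2}/(2z_{1})$, $|z|^{2}=\sqrt{|q|^{2}}=|q|$, being careful that the sign ambiguity of the branch cancels because every monomial in $G$ that survives is even in $(z_{1},z_{2})$ jointly (it is manifestly invariant under $z\mapsto -z$), so the computation is branch-independent; (iv) substitute into $(1+a^{2}e^{2})G$, collect terms, and invoke the energy constraint $|w|^{2}/2 + f|z|^{2} + s = 0$ (equivalently, after the time change, $|p|^{2}/2 + s/|q| + f = 0$) to eliminate the term that is not manifestly a function on $T^{*}(\mathbb{C}\setminus O)$; (v) read off that the result is exactly $A(p_{1},p_{2},q_{1},q_{2})$ with $\tilde a = a^{2}e^{2}/2$. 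That $A$ is then automatically a first integral of the Kepler billiard on that energy level, and invariant under reflection at $\phi(\mathcal B)$, follows from Theorem~\ref{thm: conformal_trans}; but since the lemma only asserts the algebraic identity, only steps (i)--(v) are needed here.

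The main obstacle I anticipate is purely bookkeeping: the substitution $w_{1}^{2}\mapsto(\cdots)$ introduces denominators $z_{1}^{2}+z_{2}^{2}$ and square roots $\sqrt{(|q|+q_{1})/2}$, and it is easy to lose track of which terms are genuine functions of $(q,p)$ and which only become so after imposing the energy relation. The clean way to manage this is to never clear the square root until the very end: keep $|q|=\sqrt{q_{1}^{2}+q_{2}^{2}}$ as a single symbol, note $q_{1}=z_{1}^{2}-z_{2}^{2}$ and $q_{1}+|q| = 2z_{1}^{2}$ exactly, so that $w_{1}^{2} = p_{1}^{2}(z_{1}^{2}+z_{2}^{2})^{2}/z_{1}^{2}\cdot(\text{correction})$ — better, invert directly: from $p = w/\overline z$ one has $w = p\,\overline z$, hence $w_{1} = p_{1}z_{1} + p_{2}z_{2}$, $w_{2} = -p_{1}z_{2} + p_{2}z_{1}$ (reading off real/imaginary parts of $p\overline z$), which makes $w_{1}^{2}$ and the angular momentum polynomial in $z_{1},z_{2},p_{1},p_{2}$ with no denominators at all; then only $z_{1}^{2},z_{2}^{2},z_{1}z_{2}$ appear, and these are $(|q|+q_{1})/2$, $(|q|-q_{1})/2$, $q_{2}/2$ respectively. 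With that reformulation every step becomes a finite polynomial identity plus one use of the energy relation, and the claimed formula for $A$ with $\tilde a = a^{2}e^{2}/2$ drops out.
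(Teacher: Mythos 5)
Your proposal follows essentially the same route as the paper's proof: the normalized lift $q=z^{2}$, $p=w/\overline{z}$ (so $w_{1}=p_{1}z_{1}+p_{2}z_{2}$ and $z_{1}w_{2}-z_{2}w_{1}=q_{1}p_{2}-q_{2}p_{1}$ \emph{exactly}, with no factor of $2$ --- the hesitation in your step (ii) is resolved by the normalization already built into Theorem \ref{thm: duality}), then the substitutions $z_{1}^{2}=(|q|+q_{1})/2$, $z_{2}^{2}=(|q|-q_{1})/2$, $z_{1}z_{2}=q_{2}/2$, and a single use of the energy relation to convert the residual $f$-dependence into the Kepler potential term, which is precisely the paper's computation (your branch-independence remark is a correct extra observation). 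The one point to pin down before the algebra is the sign convention for $s$: the paper's proof uses $|p|^{2}/2 - s/\sqrt{q_{1}^{2}+q_{2}^{2}} + f = 0$ (consistent with the force function $s/|q|$ in the lemma statement), whereas your relation $|p|^{2}/2 + s/|q| = -f$, taken verbatim from the statement of Theorem \ref{thm: duality}, would flip the sign of the $s q_{1}/\sqrt{q_{1}^{2}+q_{2}^{2}}$ term in $A$ (and note that in either convention the transformed integral agrees with $A$ only up to an inessential additive constant $2\tilde{a}s$).
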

   \begin{proof} 
	{We observe that $(z_1 w_2 - z_2 w_1)^2$ is mapped into the squared angular momentum $(q_1 p_2 - q_2 p_1)^2$. Indeed, we see that
	\begin{align*}
	z_1 w_2 - z_2 w_1 
	&= \frac{(z_1^2 - z_2^2)(z_1 w_2 + z_2 w_1)- 2z_1z_2(z_1w_1 - z_2w_2)}{z_1^2 + z_2^2}\\
	&= q_1 p_2 - q_2 p_1.
	\end{align*}
	}
	
	 {We now consider the other term {$2f z_1^2 +  w_1^2$. }
	 With the relations 
	 $$q_1 = z_1^2 - z_2^2, \quad q_2 = 2z_1 z_2,$$
	  we get
	 \begin{align*}
	 z_1^2 
	 &= \frac{z_1^2 -z_2^2 + \sqrt{(z_1^2 - z_2^2)^2 + 4z_1^2 z_2^2}}{2}\\
	 &= \frac{q_1 + \sqrt{q_1^2 + q_2^2}}{2}, 
	 \end{align*}
	 and 
	 \begin{align*}
	 z_2^2 &=z_1^2 - q_1 \\
	 &= \frac{-q_1 + \sqrt{q_1^2 + q_2^2}}{2}.
	 \end{align*}
	 From these and $w_1= z_1 p_1 + z_2 p_2$, we get
	 \begin{align*}
	 w_1^2 &= (z_1 p_1 + z_2 p_2)^2 \\
	 &= z_1^2 p_1^2 + 2 z_1z_2 p_1p_2 + z_2^2 p_2^2 \\
	 &=  \frac{1}{2} \left( p_1^2 q_1 - p_2^2 q_1 + 2 p_1 p_2 q_2 {+}  p_1^2 \sqrt{q_1^2 + q_2^2}  {+} p_2^2 \sqrt{q_1^2 + q_2^2}\right).
	 \end{align*}
	 From these we see that {$2f z_1^2 +  w_1^2$} is mapped into 
	\[
	{f\left(q_1 {+} \sqrt{q_1^2 + q_2^2}\right) + \frac{1}{2} \left( p_1^2 q_1 - p_2^2 q_1 + 2 p_1 p_2 q_2 {+}  p_1^2 \sqrt{q_1^2 + q_2^2}  {+} p_2^2 \sqrt{q_1^2 + q_2^2}\right).}
	\]
	{After fixing the energy of the Hooke problem to $s$ and transforming the resulting system via the complex square mapping we get the energy constraint 
	$$ (p_1^2 + p_2^2)/2 - s/(\sqrt{q_1^2 + q_2^2}) + f = 0.$$
	From which we deduce that 
	\begin{align*}
	&{f}\left(q_1 {+} \sqrt{q_1^2 + q_2^2}\right) + \frac{1}{2} \left( p_1^2 q_1 - p_2^2 q_1 + 2 p_1 p_2 q_2 {+}  p_1^2 \sqrt{q_1^2 + q_2^2}  {+} p_2^2 \sqrt{q_1^2 + q_2^2}\right)\\
	=&-\left((-p_1 q_2 + p_2 q_1)p_2 -s \cdot \frac{q_1}{\sqrt{q_1^2 + q_2^2}} - s\right).
	\end{align*}}
	Therefore, the additional first integral
	\[
	G(z_1,z_2,w_1,w_2) =  a^2 e^2({2f} z_1^2 + w_1^2) +(z_1 w_2 - z_2 w_1)^2
	\]
	is transformed into the form 
	\begin{align*}
	A(p_1,p_2,q_1,q_2) := {(p_1 q_2 - p_2 q_1)^2 }
	- 2 \tilde{a} \left((-p_1 q_2 + p_2 q_1)p_2 - \frac{{s} q_1}{\sqrt{q_1^2 + q_2^2}}\right),
	\end{align*}
	where {$\tilde{a}=a^{2} e^{2}/2$} ({which is the distance from the center to one of the foci in case when the transformed curve is a focused ellipse or hyperbola}),{ on the $-f$-energy hypersurface of the Kepler problem.}
	}
	\end{proof}
	
	{We say a conic section is \emph{focused}, when the origin is a focus of it. Using the duality between the Kepler billiard system and the Hooke billiard system given in the {Theorem \ref{thm: duality}}, we deduce various integrable Kepler billiards {from Theorem \ref{thm: centered_conic}}, that we summarize in the following theorem:} 
	
	\begin{theorem}
		\label{thm: focused_conic}
		The Kepler system $(\mathbb{C}, g_{flat}, \frac{{s}}{|q|}, \mathbb{R})$
		admits {any focused conic sections, {degenerate cases allowed, }as {integrable reflection walls}}. {These include}
		\begin{enumerate}
			\item any focused parabola
			\item any focused ellipse
			\item {any focused hyperbola}
			\item any line.
		\end{enumerate} 
		{The additional first integral is given in Lemma \ref{lem: Gallavotti-Jauslin}.}
	\end{theorem}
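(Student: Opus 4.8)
The plan is to derive Theorem~\ref{thm: focused_conic} as a corollary of Theorem~\ref{thm: centered_conic} via the duality established in Theorem~\ref{thm: duality} and the transformation of first integrals computed in Lemma~\ref{lem: Gallavotti-Jauslin}. The key observation is that the complex square mapping $z \mapsto q = z^{2}$ sends a centered conic section in the $z$-plane to a focused conic section in the $q$-plane. Concretely, I would first verify this correspondence: if $\mathcal{B}$ is given by $z_{1}^{2}/a^{2} \pm z_{2}^{2}/b^{2} = 1$, then substituting $q_{1} = z_{1}^{2} - z_{2}^{2}$, $q_{2} = 2 z_{1} z_{2}$ and eliminating $z$ yields a curve in the $q$-plane which, after simplification, is a conic with one focus at the origin $O$. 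A clean way to see this is to use polar-type coordinates: a centered ellipse/hyperbola in the $z$-plane has the property that $|z|^{2}$ is an affine function of $z_{1}^{2}$ along the curve (from the defining equation), and $|z|^{2} \mapsto |q|$, $z_{1}^{2} \mapsto (q_{1} + |q|)/2$ under the mapping (as computed in Lemma~\ref{lem: Gallavotti-Jauslin}), so the image satisfies a relation of the form $|q| = \alpha q_{1} + \beta$, which is exactly the focus-directrix equation of a conic with focus at $O$. Conversely every focused conic arises this way for a suitable choice of $a, b$ (and the sign, i.e.\ ellipse vs.\ hyperbola), and focused parabolas and lines appear as the limiting/degenerate cases.

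Once this geometric dictionary is in place, the proof proceeds as follows. Fix a focused conic section $\mathcal{B}' \subset \mathbb{C} \setminus O$ in the Kepler plane and any value $s$ of the Kepler force parameter; we want integrability on every energy level $e$ of $(\mathbb{C}, g_{flat}, s/|q|, \mathcal{B}')$. By Theorem~\ref{thm: duality} with $k = 2$, the Kepler system on its energy level $\{(|p|^{2}/2 + s/|q|) = -f\}$ is, after the conformal symplectic map and the time reparametrization of Lemma~\ref{lem: time-reparametrization}, iso-energetically equivalent to the Hooke system $(\mathbb{C}, g_{flat}, -f|z|^{2})$ on its zero-energy... more precisely on its $s$-energy level. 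Choosing $f := -e$ (so the Kepler energy level $e$ corresponds to the Hooke parameter) and pulling $\mathcal{B}'$ back under $z \mapsto z^{2}$, we obtain a centered conic section $\mathcal{B}$ (with $a, b$ determined by the geometric dictionary above), and the hypotheses of Theorem~\ref{thm: conformal_trans} are satisfied: $\phi$ is conformal and $2$-to-$1$ on $\mathbb{C}\setminus O$, $\phi(\mathcal{B}) \subset \mathcal{B}'$, and the cotangent lift matches the prescribed energy levels. Theorem~\ref{thm: centered_conic} gives integrability of the Hooke billiard $(\mathbb{C}, g_{flat}, -f|z|^{2}, \mathcal{B})$, hence by Theorem~\ref{thm: conformal_trans} the Kepler billiard $(\mathbb{C}, g_{flat}, s/|q|, \mathcal{B}')$ is integrable, and Lemma~\ref{lem: Gallavotti-Jauslin} identifies the additional first integral as $A(p_{1},p_{2},q_{1},q_{2})$ with $\tilde{a} = a^{2}e^{2}/2$ the center-to-focus distance. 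I would then handle the remaining cases of the enumeration: a focused parabola is the limit of focused ellipses (or hyperbolae) with one focus fixed at $O$ and the other going to infinity, corresponding on the Hooke side to the line case of Corollary~\ref{cor: lines_Hooke}; and a line in the Kepler plane is the image of a line through the origin or is handled directly, with the first integral obtained by the corresponding limit of $A$.

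The main obstacle I anticipate is the bookkeeping in the geometric dictionary — in particular, getting the correspondence between $(a,b)$, the sign (ellipse/hyperbola), and the resulting focused conic exactly right, including which branch(es) of the two-valued inverse $z \mapsto z^{2}$ one must take and whether the full focused conic or only part of it is covered (self-intersections of the image, as illustrated in Figure~\ref{fig:transfomed_conicsection_k2}, and the issue that a centered conic is centrally symmetric while a generic focused conic is not — so a centered ellipse should map onto a focused conic traversed ``twice'', which is consistent with $\phi$ being $2$-to-$1$). Care is also needed with the degenerate and limiting cases (parabola, line, and the sign of $s$ and of the Kepler energy, i.e.\ attractive vs.\ repulsive and the bounded vs.\ unbounded regime), since Theorem~\ref{thm: duality} only provides an iso-energetic transformation in certain sign combinations of $(f,s)$; the remaining sign combinations and the parabola may need a separate limiting argument rather than a direct appeal to the duality. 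Modulo these geometric and case-analysis points, the integrability itself is an immediate consequence of the already-established machinery, and the explicit first integral is read off from Lemma~\ref{lem: Gallavotti-Jauslin}.
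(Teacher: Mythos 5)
Your proposal is correct and follows essentially the same route as the paper: pull each focused conic back under $z\mapsto z^2$ to a centered conic (or line) in the Hooke plane, invoke Theorem~\ref{thm: centered_conic} (resp.\ Corollary~\ref{cor: lines_Hooke}), and transfer integrability and the first integral via Theorem~\ref{thm: conformal_trans} and Lemma~\ref{lem: Gallavotti-Jauslin}; the paper even treats the focused parabola directly as the image of a Hooke line $z_1=c$ rather than by a limiting argument. The one bookkeeping point you flag but leave open is resolved in the paper as follows: the preimage of a full focused hyperbola is a pair of \emph{confocal} centered hyperbolae, so that case needs Corollary~\ref{cor: confocal_Hooke} rather than Theorem~\ref{thm: centered_conic} alone, while a line in the Kepler plane arises as the image of a rectangular centered hyperbola $z_1^2-z_2^2=a^2$.
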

	\begin{proof}
		{We discuss case by case.} 
		\begin{enumerate}
		\item  {Any lines are integrable reflection wall for the Hooke potential. By rotation-invariance it is enough to consider the case of {a line given by the expression $z_1 = c, c \in \mathbb{R} \backslash \{0\}$}, which is transformed by $z \mapsto z^2 = q$ into the parabola
		\[
		q_1 = -\frac{q_2^2}{4c^2} + c^2,
		\]
		focused at the origin. }
		
		\item  {Consider a centered ellipse} given by 
		\begin{equation}
		\label{eq: centered_ellipse}
		\frac{z_1^2}{a^2} + \frac{{z_2^{2}}}{b^2} = 1
		\end{equation}
		We parametrize this elliptic curve as 
		\[
		z_1 = a \cos u, \quad z_2 = b \sin u
		\]
		with a parameter $u \in [0, 2 \pi)$. Then the image of this curve by the conformal mapping $z \mapsto z^2= q$ is given by
		\[
		{q_{1}} = a^2 \cos^2 u - b^2 \sin^2 u, \quad {q_{2}} = 2ab \sin u \cos u.
		\] 
		{which describes the focused ellipse}
		\begin{equation}
		\label{eq: trans_focused_ellipse}
		\frac{(q_1 - (a^2 - b^2)/2)^2}{(a^2 + b^2)^2/4} + \frac{q_2^2}{a^2 b^2} =1.
		\end{equation}
		
		\item  
		 Consider a centered hyperbola given by 
		\begin{equation}
		\label{eq: centered_hyperbola1}
		\frac{z_1^2}{a^2} - \frac{z_2^2}{b^2} = 1, \quad  {a \neq b},
		\end{equation}
		{{parametrized as 
		\[
		z_1 =a \cosh u, \quad z_2 = b \sinh u
		\]}
		{with parameter $u \in (-\pi, \pi)$ for one branch and  $u \in (-\pi, \pi)$ for the other branch.
		Then the image of this curve by the conformal mapping $z \mapsto z^2= q$ is given by
		$$q_{1} = a^2 \cosh^2 u - b^2 \sinh^2 u, \quad q_{2} = 2 a b \sinh u \cosh u.$$}}
		
	{We thus get that the transformed curve satisfies
		\begin{equation}
		\label{eq: trans_focused_hyp}
		\frac{(q_1 - (a^2 + b^2)/2)^2}{(a^2 - b^2)^2/4} - \frac{q_2^2}{a^2 b^2} =1.
		\end{equation}
    		which describes a focused hyperbola. Indeed this image is seem to be a branch of this hyperbola. The pre-image of the other branch of this hyperbola is the confocal centered hyperbola given by }
		\begin{equation}
		\label{eq: centered_hyperbola2}
		\frac{z_1^2}{b^2} - \frac{z_2^2}{a^2} = 1, 
    		\end{equation}
		{To see this, it is enough to exchange the roles of $a$ and $b$ in the above reasoning. }
		
		{Since the pre-image of the focused hyperbola consists of two confocal hyperbolae, we may thus conclude with Corollary \ref{cor: confocal_Hooke}. }

		\item  {Finally, a hyperbola given by
		\[
		\frac{z_1^2}{a^2} - \frac{z_2^2}{a^2} = 1
		\]
		is transformed by the conformal mapping $z \mapsto z^2=q$ into the line 
		\[
		{q_1} = {a^{2}}.
		\]
	}
	\end{enumerate}
	\end{proof}

	{In appendix \ref{sec: G-J}, for the purpose of comparison, we directly verify the invariance of Gallavotti-Jauslin's first integral in the case that the reflection walls are focused ellipses or focused hyperbola.}
	
	
	\begin{cor}
		\label{cor: confocal_Kepler}
		{The Kepler system $(\mathbb{C}, g_{flat}, \frac{{s}}{|q|}, \mathbb{R})$
		admits any combination of confocal focused ellipses and hyperbolae as an integral reflection wall.	}
	\end{cor}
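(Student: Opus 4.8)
\emph{Proof strategy.} The plan is to obtain this corollary from Corollary~\ref{cor: confocal_Hooke} by transporting it through the complex square mapping $\phi: \mathbb{C}\setminus O\to\mathbb{C}\setminus O$, $z\mapsto q=z^{2}$, exactly in the way Theorem~\ref{thm: focused_conic} is obtained from Theorem~\ref{thm: centered_conic}. Using the rotational invariance of the Kepler problem I would first normalize: assume that the two common foci of the given confocal family of focused conics are the origin $O$ and the point $(c^{2},0)$ with $c>0$, so that for every member the two foci lie at distance $c^{2}$ from each other; equivalently, the parameter $\tilde a=c^{2}/2$ appearing in the Gallavotti--Jauslin integral of Lemma~\ref{lem: Gallavotti-Jauslin} is the same for all members.

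The steps would then be the following. First, I would record that, by the computations already made in the proof of Theorem~\ref{thm: focused_conic}, the $\phi$-preimage of a focused ellipse with foci $O,(c^{2},0)$ is the single centered ellipse $z_{1}^{2}/a^{2}+z_{2}^{2}/b^{2}=1$ with $a^{2}-b^{2}=c^{2}$, while the $\phi$-preimage of a focused hyperbola with the same foci is the union of the two centered hyperbolae $z_{1}^{2}/a^{2}-z_{2}^{2}/b^{2}=1$ and $z_{1}^{2}/b^{2}-z_{2}^{2}/a^{2}=1$ with $a^{2}+b^{2}=c^{2}$; in every case the preimage is a union of centered conics all of whose foci are $(\pm c,0)$. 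Hence the $\phi$-preimage of the whole combination of confocal focused conics is a combination of \emph{confocal} centered conics in the $z$-plane. Second, by Corollary~\ref{cor: confocal_Hooke} this preimage is an integrable reflection wall for the Hooke billiard $(\mathbb{C},g_{flat},-f|z|^{2},\mathcal{B})$, with the common first integral (\ref{eq: firstint_G}) in which $a^{2}e^{2}=c^{2}$. Third, I would apply Theorem~\ref{thm: conformal_trans} to $\phi$, which is a conformal $2$-to-$1$ regular map with $\phi(\mathcal{B})=\mathcal{B}'$ and whose cotangent lift matches the relevant energy levels by Theorem~\ref{thm: duality}; together with Lemma~\ref{lem: Gallavotti-Jauslin}, which identifies the transported first integral as the Gallavotti--Jauslin integral $A$ with $\tilde a=c^{2}/2$, this yields that the combination of confocal focused conics is an integrable reflection wall for the Kepler billiard on each relevant energy level, and since the formula for $A$ does not involve the Kepler energy, the system is integrable on all of $\mathbb{R}$. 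Alternatively, one can bypass the Hooke side entirely: Theorem~\ref{thm: focused_conic} already shows that each individual member is an integrable reflection wall with first integral $A$, and since $\tilde a$ (half the distance between the two foci) is constant along a confocal family, it is \emph{literally the same function} $A$ for every member; hence $A$ is invariant under reflection off each of them, and therefore off their union, which gives integrability at once.

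I do not expect any analytic obstacle. The only real content is the geometric bookkeeping in the first step, namely that ``confocal'' for the focused conics in the $q$-plane corresponds to ``confocal'' for their preimages in the $z$-plane, governed by the single parameter $c^{2}=2\tilde a$, so that one and the same first integral serves every member of the combination simultaneously. The mildly delicate point is the hyperbola case: a focused hyperbola has two branches whose $\phi$-preimages are two \emph{distinct} (but confocal) centered hyperbolae, so that even for a single focused hyperbola one must invoke Corollary~\ref{cor: confocal_Hooke} rather than merely Theorem~\ref{thm: centered_conic}. This is precisely the situation already handled in the treatment of the focused hyperbola in the proof of Theorem~\ref{thm: focused_conic}, so nothing new beyond it is needed.
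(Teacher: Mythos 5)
Your proposal is correct and follows essentially the same route as the paper: the key observation in both is that confocal centered conics in the $z$-plane (integrable for the Hooke billiard by Corollary~\ref{cor: confocal_Hooke} with a single common first integral) are carried by $z\mapsto z^{2}$ precisely onto confocal focused conics in the $q$-plane, as read off from \eqref{eq: trans_focused_ellipse} and \eqref{eq: trans_focused_hyp} with $b^{2}=a^{2}-c^{2}$ resp.\ $b^{2}=c^{2}-a^{2}$. Your added remarks --- that a focused hyperbola's preimage is a pair of confocal centered hyperbolae (so Corollary~\ref{cor: confocal_Hooke} rather than Theorem~\ref{thm: centered_conic} is needed), and that one may equivalently note that the Gallavotti--Jauslin integral $A$ depends only on $\tilde a=c^{2}/2$, hence is literally the same function for every member of the confocal family --- are accurate refinements of the same argument.
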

	\begin{proof}
		{It suffices to see that confocal centered ellipses/hyperbolae are transformed into confocal focused ellipses/hyperbolae by the complex square mapping $z \mapsto z^2$. This can be easily checked from the forms of a transformed focused ellipse (\ref{eq: trans_focused_ellipse}) and a transformed focused hyperbola (\ref{eq: trans_focused_hyp}) by setting $b^2 = a^2 -c^2$ for ellipses and $b^2 = c^2 -a^2$ for hyperbolae. }
	\end{proof}

	{Similarly, from Corollary \ref{cor: lines_Hooke} we obtain integrable Kepler billiards with any combination of  focused parabolae with collinear major axis.}
	
	\begin{cor}
	\label{cor: confocal_parabola_halfline_Kepler}
	{The Kepler system $(\mathbb{C}, g_{flat}, \frac{{s}}{|q|}, \mathbb{R})$
	admits any combination of focused parabolae with collinear major axises as an integrable reflection wall.}
	\end{cor}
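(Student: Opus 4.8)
The plan is to deduce this statement from Corollary \ref{cor: lines_Hooke} by means of the complex square mapping, in exactly the way Theorem \ref{thm: focused_conic} and its corollaries were deduced from the corresponding Hooke billiards, and to read off the first integral from the computation already carried out in the proof of Lemma \ref{lem: Gallavotti-Jauslin}. First I would recall that by Corollary \ref{cor: lines_Hooke} the Hooke billiard $(\mathbb{C}, g_{flat}, -f|z|^2, \mathcal{B})$ is integrable whenever $\mathcal{B}$ is any combination of lines parallel or perpendicular to a fixed coordinate axis, say the $z_1$-axis, and that in this situation \emph{all} such lines share the common additional first integral $G_\infty := w_1^2 + 2f z_1^2$, since $G_\infty$ is visibly invariant under $w_1 \mapsto -w_1$ (reflection at a line $z_1 = \mathrm{const}$) and under $w_2 \mapsto -w_2$ (reflection at a line $z_2 = \mathrm{const}$).

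Next I would compute the images of these lines under $\phi: z \mapsto z^2 = q$. As in the proof of Theorem \ref{thm: focused_conic}(1), the line $z_1 = c$ with $c \in \mathbb{R}\setminus\{0\}$ is sent to the parabola $q_1 = -q_2^2/(4c^2) + c^2$, focused at the origin and opening toward $-q_1$, while the perpendicular line $z_2 = c$ is sent to the parabola $q_1 = q_2^2/(4c^2) - c^2$, again focused at the origin but opening toward $+q_1$. As $c$ ranges over $\mathbb{R}\setminus\{0\}$ these two families together exhaust every parabola whose focus is the origin and whose axis is the $q_1$-axis; combining this with the rotational symmetry of the Kepler problem, every focused parabola with major axis in a prescribed direction arises this way, and the degenerate members (rays issuing from the origin along the common axis, which are the images of the coordinate axes) are included. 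Consequently a combination $\mathcal{B}'$ of focused parabolae with collinear major axes is, after rotating those axes onto the $q_1$-axis, precisely the image under $\phi$ of a combination $\mathcal{B}$ of lines parallel or perpendicular to the $z_1$-axis, each parabola lifting to a centrally symmetric pair of parallel lines contained in $\mathcal{B}$.

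Then I would invoke Theorem \ref{thm: conformal_trans} together with Theorem \ref{thm: duality}: on the $s$-energy hypersurface of the Hooke problem the complex square mapping is an iso-energetic conformal transformation onto the $(-f)$-energy hypersurface of the Kepler problem $(\mathbb{C}\setminus O, g_{flat}, s/|q|)$; none of the lines or parabolae involved passes through the branch point $O$; and each of the two inverse branches $\psi$ of $\phi$ maps $\mathcal{B}'$ into one of the parallel lines of each relevant pair, hence $\psi(\mathcal{B}')\subset\mathcal{B}$. Since $(\mathbb{C}, g_{flat}, -f|z|^2, \mathcal{B})$ is integrable by Corollary \ref{cor: lines_Hooke}, Theorem \ref{thm: conformal_trans} yields integrability of the Kepler billiard with reflection wall $\mathcal{B}'$ on its $(-f)$-energy level, and as $f$ ranges over $\mathbb{R}$ this covers every energy, i.e. $\mathcal{E} = \mathbb{R}$. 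For the first integral I would transform $G_\infty = w_1^2 + 2f z_1^2$ using the substitutions of Lemma \ref{lem: Gallavotti-Jauslin} and the Kepler energy relation $(p_1^2+p_2^2)/2 - s/\sqrt{q_1^2+q_2^2} + f = 0$; the same manipulation performed there (now keeping only the term linear in $\tilde a$, i.e. the $e \to \infty$ limit of $G$) shows that $G_\infty$ transforms, up to a constant factor, into $\big((-p_1 q_2 + p_2 q_1)p_2 - s q_1/\sqrt{q_1^2+q_2^2}\big)$, a component of the Laplace--Runge--Lenz vector, which is a first integral of the Kepler flow at every energy, and whose invariance under reflections at the collinear focused parabolae is exactly what the conformal correspondence provides.

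I expect the only delicate point to be the branched-covering bookkeeping: checking that a given combination of focused parabolae genuinely pulls back to a combination of \emph{parallel and perpendicular} lines (with the centrally symmetric pairing), and that the hypotheses of Theorem \ref{thm: conformal_trans} -- regularity away from $O$ and the inverse-branch containment $\psi(\mathcal{B}')\subset\mathcal{B}$ -- hold verbatim. It is worth emphasizing that the hypothesis ``collinear major axes'' is sharp here: parabolae with non-parallel axes would pull back to lines that are neither parallel nor perpendicular, and no single $G_\infty$ is simultaneously invariant under reflections at all of them, so the mechanism genuinely requires collinearity.
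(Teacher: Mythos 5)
Your proposal is correct and follows exactly the route the paper takes: its proof of this corollary is the one-line reduction to Theorem \ref{thm: focused_conic}, Case 1 (lines $z_1=c$, $z_2=c$ map under $z\mapsto z^2$ to focused parabolae with the $q_1$-axis as major axis) combined with Corollary \ref{cor: lines_Hooke} (the common first integral $w_1^2+2fz_1^2$ for any combination of parallel/perpendicular lines), transported by Theorems \ref{thm: conformal_trans} and \ref{thm: duality}. You have simply spelled out the covering bookkeeping and the transformed first integral that the paper leaves implicit; the details check out.
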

	\begin{proof}
		{The argument follows directly from Theorem \ref{thm: focused_conic}, Case 1 and Corollary \ref{cor: lines_Hooke}.  }
	\end{proof}
	
	\subsection{{From Hooke/Kepler Billiards to Free Billiards}}

	{We now discuss the classical case of free billiards based on our discussions on integrable Hooke/Kepler billiards, by setting $f=0$ in the Hooke billiards, or $s=0$ in the Kepler billiards. The following proposition now becomes a direct corollary.}

	\begin{cor}
		\label{prop:free_conic}
		{Free billiards admit conic section reflection walls as integrable reflection wall. } 
	\end{cor}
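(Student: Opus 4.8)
The plan is to obtain this corollary as the degenerate limit of the integrable Hooke billiards established in Theorem \ref{thm: centered_conic} together with the duality of Theorem \ref{thm: duality}. Recall that Theorem \ref{thm: duality} with $f = 0$ (and $k = 2$) gives an iso-energetic transformation, induced by the cotangent lift of the complex square mapping $z \mapsto z^{2} = q$, between the free motion in the plane at a fixed positive energy and the central-force system $(\mathbb{C}\setminus O, g_{flat}, s/|q|, 0)$ at its zero-energy level, after the time reparametrization of Lemma \ref{lem: time-reparametrization}. Alternatively, one may simply set $s = 0$ in the Kepler billiard picture: then the Kepler system $(\mathbb{C}, g_{flat}, s/|q|, \cdot)$ degenerates to the free billiard in the plane, and Theorem \ref{thm: focused_conic} together with its corollaries asserts integrability for any focused conic section reflection wall.

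The key steps are then the following. First I would invoke Theorem \ref{thm: conformal_trans}: since the complex square mapping is a conformal $2$-to-$1$ regular covering of $\mathbb{C}\setminus O$, and (by Theorem \ref{thm: duality} with $f=0$) its cotangent lift intertwines the positive-energy free-motion hypersurface with the zero-energy hypersurface of the homogeneous central-force system, integrability is transported in both directions provided the reflection wall in one system is carried into a reflection wall in the other. Second, I would recall from the proof of Theorem \ref{thm: focused_conic} the explicit dictionary between centered conic sections in the $z$-plane and focused conic sections in the $q$-plane under $z \mapsto z^{2}$: centered ellipses map to focused ellipses (equation (\ref{eq: trans_focused_ellipse})), centered hyperbolae to focused hyperbolae (equation (\ref{eq: trans_focused_hyp})), the line $z_{1} = c$ to a focused parabola, and the degenerate hyperbola $z_{1}^{2}/a^{2} - z_{2}^{2}/a^{2} = 1$ to a line. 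Running the inverse branches of $z \mapsto z^{2}$ shows conversely that every conic section in the $q$-plane is the image of a conic section (or a confocal pair of conic sections, as in Corollary \ref{cor: confocal_Kepler}) in the $z$-plane. Third, since Theorem \ref{thm: centered_conic} and Corollaries \ref{cor: confocal_Hooke} and \ref{cor: lines_Hooke} establish integrability of the Hooke billiard for all centered conic sections and their confocal combinations, and integrability at $f = 0$ is just the $f \to 0$ specialization of the first integral (\ref{eq: firstint_G}) (which is perfectly well-defined at $f = 0$, reducing to a combination of $w_{1}^{2}$ and $(z_{1}w_{2} - z_{2}w_{1})^{2}$), the transported first integral yields integrability of the free billiard with the corresponding conic section wall. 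Finally, I would note that every conic section in the plane is, up to translation and rotation, one of the normal forms treated above — ellipse, hyperbola, parabola, line, or a degenerate case — so all conic section reflection walls are covered.

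The main obstacle is bookkeeping at the degenerate locus rather than any genuine difficulty: the complex square mapping and its cotangent lift are singular at the origin $O$, so one must check that the reflection wall and the relevant trajectory arcs avoid $O$, or handle the puncture by a limiting argument, and one must confirm that the first integral (\ref{eq: firstint_G}) and its transform in Lemma \ref{lem: Gallavotti-Jauslin} remain nontrivial and independent of the energy after setting $f = 0$ (resp. $s = 0$). Once these points are dispatched — essentially by continuity of the first integrals in the parameters and the observation that a conic section, being a one-dimensional curve, meets the single point $O$ in a measure-zero set that can be excised without affecting integrability in the sense of the paper — the corollary follows immediately. For completeness one could also point to the self-contained direct verifications in Appendix \ref{sec: Joachimsthal}, which re-derive the Joachimsthal-type first integral for free conic billiards without reference to the Hooke/Kepler duality.
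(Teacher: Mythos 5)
Your proposal is correct and follows essentially the same route as the paper, which obtains the corollary by setting $f=0$ in the Hooke billiards of Theorem \ref{thm: centered_conic} (centered ellipses and hyperbolae) and $s=0$ in the Kepler billiards of Theorem \ref{thm: focused_conic} (focused parabolae), with the first integral (\ref{eq: firstint_G}) specializing to the Joachimsthal integral. Your additional remarks on translation invariance of free motion and on the behaviour at the puncture $O$ are sensible but only make explicit what the paper leaves implicit.
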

	
	{We now link the additional first integral given by (\ref{eq: firstint_G}) to the well-known Joachimsthal first integral, as follows:}
	From Theorem \ref{thm: centered_conic}, in the case of $f =0$, we have the additional first integral 
	\[
	(a^2 -  b^2 )w_1^2 + (z_1 w_2 - z_2 w_1)^2
	\]
	for the free billiard with a centered elliptic integrable reflection wall given by 
	\begin{equation}
	\label{eq: centered_ellipse2}
	\frac{z_1^2}{a^2} + \frac{z_2^2}{b^2} =1.
	\end{equation}

		By dividing this by $a^2 b^2$,we get
		\begin{align*}
		& \phantom{=}\left( \frac{1}{b^2} - \frac{1}{a^2} \right)w_1^2 + \frac{(z_1 w_2 - z_2 w_1)^2}{a^2 b^2} \\
		& =
		\frac{w_1^2}{b^2} - \frac{w_1^2}{a^2} +\frac{z_1^2 w_2^2 - 2 z_1 z_2 w_1w_2 + z_2^2 w_1^2}{a^2 b^2}\\
		&= \frac{w_1^2}{b^2} - \frac{w_1^2}{a^2} + \frac{z_2^2 }{b^2 }\cdot \frac{w_1^2}{a^2} + \frac{z_1^2 }{a^2 }\cdot \frac{w_2^2}{b^2} - \frac{2 z_1 z_2 w_1w_2 }{a^2 b^2}\\
		&= \frac{w_1^2 + w_2^2}{b^2} - \left( 1 - \frac{z_2^2}{b^2} \right) \frac{w_1^2}{a^2} - \left( 1 - \frac{z_1^2}{a^2} \right) \frac{w_2^2}{b^2}  - \frac{2 z_1 z_2 w_1w_2 }{a^2 b^2}\\
		&= \frac{1}{b^2} - \left( \frac{z_1^2 w_1^2}{a^4}  +  \frac{z_2 w_2}{b^4} +  \frac{2 z_1 z_2 w_1w_2 }{a^2 b^2} \right)\\
		&= \frac{1}{b^2 } - \left( \frac{z_1 w_1}{a^2} + \frac{z_2 w_2}{b^2} \right)^2.
		\end{align*}
		In the fourth equation, we used the equation of centered ellipse (\ref{eq: centered_ellipse2}).
		{In which we recognize} the classical {Joachimsthal} first integral 
		$$\dfrac{z_1 w_1}{a^2} + \dfrac{z_2 w_2}{b^2} $$
		 of the {free} billiard with an elliptic boundary. 
	
	\subsection{{Conjectures related to the Birkhoff Conjecture}}
	{From Theorem \ref{thm: centered_conic} and in view of {the} Birkhoff-Poritsky's conjecture, we make the following conjectures for Hooke and Kepler billiards. 
	\begin{conjecture} 
		{The only Hooke billiards with smooth connected reflection walls {which are integrable on all regular energy hypersurfaces} are those with a branch of a centered conic section or a line.} 	\end{conjecture}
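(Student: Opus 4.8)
The plan is to adapt the strategy that settled the algebraic form of the Birkhoff--Poritsky conjecture, while exploiting two features special to this setting: the superintegrability of the Hooke problem and the Hooke--Kepler duality of Theorem~\ref{thm: duality}. As in the free case one cannot realistically rule out every conceivable first integral in one stroke; the first goal should be the \emph{algebraic} version, in which the additional first integral $G$ is assumed to restrict, on each regular energy hypersurface $\{H=E\}$, to a function polynomial in the momenta of degree bounded uniformly in $E$. The reduction to this case would then be treated as a separate, harder problem.

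Under the polynomiality hypothesis, the first step is to pin down the possible $G$. The Hooke flow with Hamiltonian $|w|^2/2+f|z|^2$ is superintegrable: its ring of polynomial first integrals is generated by the components of the symmetric tensor $A_{ij}=w_iw_j+2fz_iz_j$ together with the angular momentum $L=z_1w_2-z_2w_1$, subject to $A_{11}A_{22}-A_{12}^2=2fL^2$ and $A_{11}+A_{22}=2H$. Hence $G|_{\{H=E\}}$ is, modulo the energy, a polynomial in $A_{11},A_{12},A_{22},L$, and the requirement that $G$ be invariant under the reflection at $\mathcal B$ becomes a finite system of polynomial identities that must hold along $\mathcal B$. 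The computation in the proof of Theorem~\ref{thm: centered_conic} is exactly the degree-two instance of this system, in which the obstruction is the momentum-independent factor $Z(z_1,z_2)$ and its vanishing along a conic forces the conic to be centered. The first technical task is to carry this out in arbitrary degree and establish the same rigidity: the only conics and lines along which the obstruction can vanish are the centered conics, the lines parallel to an axis, and the lines through $O$, all of which are indeed integrable by Theorem~\ref{thm: centered_conic} and Corollary~\ref{cor: lines_Hooke}.

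To force $\mathcal B$ itself to be a branch of a conic I would use a high-energy scaling limit. Restricting to a bounded arc of $\mathcal B$ and rescaling time by $\tau=\sqrt{2E}\,t$, on $\{H=E\}$ one has $|dz/d\tau|\to1$ and $dw/d\tau=-2fz/\sqrt{2E}\to0$ uniformly on compact sets as $E\to+\infty$, so the Hooke billiard map near $\mathcal B$ converges in the $C^\infty$ topology to the free billiard map, and a suitably renormalized family of polynomial first integrals $G_E$ converges to a nonconstant polynomial-in-momenta first integral of the free billiard with reflection wall $\mathcal B$. The algebraic Birkhoff conjecture of Glutsyuk \cite{Glutsyuk_2017}\cite{Glutsyuk_2020} then yields that $\mathcal B$ is a branch of a conic section or a line, and the previous step selects a centered conic or a line of the listed types. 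An equivalent reformulation is obtained by pushing the problem through the complex square mapping of Theorem~\ref{thm: duality}: integrability of a Hooke billiard on all its energy levels is dual to integrability of the associated Kepler billiards, and centered conics correspond to focused conics, so the conjecture mirrors the analogous statement for Kepler billiards in the spirit of Gallavotti--Jauslin \cite{Gallavotti-Jauslin}.

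I expect the main obstacle to be, exactly as for the original Birkhoff conjecture, the removal of the polynomiality hypothesis on $G$: excluding an exotic integrable Hooke billiard whose additional first integral is not fiberwise polynomial appears to demand genuinely new input, perhaps a perturbative rigidity argument at high energy in the spirit of \cite{Kaloshin}, exploiting that the Hooke billiard degenerates to the free billiard in that regime. A secondary difficulty, already within the algebraic version, is to make the high-energy limit rigorous --- one must control the growth of $\deg_w G_E$ and normalize so that the limit does not collapse to a constant --- and to handle the case $f<0$, where the orbits are unbounded hyperbolae and the wall need not confine the motion, by a parallel but separate argument.
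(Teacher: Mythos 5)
The statement you are addressing is stated in the paper as a \emph{conjecture}: the authors offer no proof, and they explicitly frame it as an analogue of the Birkhoff--Poritsky conjecture, which itself remains open. Your text is accordingly not a proof but a research program, and you say as much yourself. The honest verdict is that there is a genuine gap --- in fact several --- and the statement remains unproven by your argument. First, the conjecture quantifies over \emph{all} additional first integrals, with no algebraicity assumption; your reduction to integrals that are fiberwise polynomial in the momenta of uniformly bounded degree is exactly the step you defer, and it is the step that has resisted all attempts in the free case. Second, even within the algebraic version, the ``rigidity'' claim --- that for an integral of arbitrary momentum degree the momentum-independent obstruction can vanish along a smooth connected curve only if that curve is a branch of a centered conic or an admissible line --- is asserted, not carried out. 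The degree-two computation in Theorem~\ref{thm: centered_conic} shows that centered conics \emph{are} integrable; it does not show that nothing else is, and the passage from ``the obstruction vanishes on $\mathcal B$'' to ``$\mathcal B$ is algebraic of degree two'' is precisely where the content of the conjecture lives. Third, the high-energy limit is delicate in the way you anticipate but do not resolve: one must rule out that the renormalized integrals $G_E$ degenerate to a function of the energy alone, control the momentum degree uniformly in $E$, and justify that a limit of billiard first integrals is a first integral of the limiting free billiard (reflection is a boundary phenomenon and does not automatically commute with $C^\infty$ limits of the flow). Finally, Glutsyuk's theorem, as you would invoke it, applies to polynomially integrable billiards under specific hypotheses on the wall and the integral, so its applicability to the limiting object would itself need verification.

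That said, the proposal is a reasonable and well-informed sketch of how one might attack the conjecture, and two of its ingredients are genuinely aligned with the paper's viewpoint: the observation that the degree-two case of the obstruction analysis is exactly the computation of Theorem~\ref{thm: centered_conic}, and the remark that, via the complex square mapping of Theorem~\ref{thm: duality}, the Hooke conjecture is equivalent to the companion conjecture for Kepler billiards with focused conics. If you wish to salvage something provable from this program, the realistic target is a theorem of the form ``the only Hooke billiards admitting an additional first integral \emph{quadratic} in the momenta on all regular energy levels are those with a branch of a centered conic or a line,'' which would only require completing your rigidity step in degree two; that would be a meaningful partial result, but it is not the statement of the conjecture.
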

	}

	\begin{conjecture} 
		{The only Kepler billiards with smooth connected reflection walls {which are integrable on all regular energy hypersurfaces} are those with a focused conic section or a line.}
	\end{conjecture}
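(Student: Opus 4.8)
The plan is to reduce the Kepler statement to the preceding Hooke conjecture via the complex square mapping, and then to indicate what an unconditional attack would require. Suppose $\mathcal{B}$ is a smooth connected reflection wall for the Kepler system $(\mathbb{C}, g_{flat}, s/|q|, \mathbb{R})$ which is integrable on all regular energy hypersurfaces. For each regular Kepler energy, that is each value $-f \in \mathbb{R}\setminus\{0\}$, Theorem~\ref{thm: duality} (with $k=2$) together with Theorem~\ref{thm: conformal_trans} produces an integrable Hooke billiard $(\mathbb{C}, g_{flat}, -f|z|^2, \tilde{\mathcal{B}})$ at the fixed energy $-s$, where $\tilde{\mathcal{B}}$ is the full preimage of $\mathcal{B}$ under $z\mapsto z^2$ --- a connected curve, invariant under $z\mapsto -z$, whose self-tangencies disappear once one passes to the two-sheeted picture. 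The first step is therefore to show that this family, as $f$ ranges over $\mathbb{R}\setminus\{0\}$, is enough to trigger the Hooke conjecture.

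Here one must bridge the mismatch between ``all regular energies'' on the Kepler side and ``all force constants at one energy'' on the Hooke side. I would use the scaling symmetry of the harmonic oscillator: for fixed $f$, the substitution $z\mapsto c z$, $w\mapsto c w$ conjugates the Hooke flow on $\{\,|w|^2/2+f|z|^2=-s\,\}$ with the Hooke flow on $\{\,|w|^2/2+f|z|^2=-c^2 s\,\}$ and carries $\tilde{\mathcal{B}}$ to $c\tilde{\mathcal{B}}$. Combining this with the previous paragraph, integrability of the Kepler billiard on all regular energy levels yields integrability, on all regular energy levels, of the Hooke billiard whose wall is $\tilde{\mathcal{B}}$, up to an overall scaling that does not affect its type. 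Granting the Hooke conjecture, $\tilde{\mathcal{B}}$ is then a branch of a centered conic or a line; pushing forward by $z\mapsto z^2$ and reading off the computations in the proof of Theorem~\ref{thm: focused_conic} (centered ellipse $\mapsto$ focused ellipse, centered hyperbola $\mapsto$ a branch of a focused hyperbola, line missing the center $\mapsto$ focused parabola, line through the center $\mapsto$ a ray of a line through the center) identifies $\mathcal{B}$ as a focused conic or a line, degenerate cases included. This settles the conjecture modulo the Hooke conjecture; in fact the two conjectures are equivalent under this dictionary.

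For an unconditional proof one must face a Birkhoff--Poritsky-type rigidity statement, now in the presence of a central potential, and this is where the real difficulty lies. I would proceed in two stages, paralleling the free case. Stage one: show that integrability on all regular energy hypersurfaces forces an additional integral which is polynomial of degree at most two in the momenta. For the free billiard this is the substance behind Glutsyuk's algebraic theorem \cite{Glutsyuk_2017}; with the Kepler term present one may hope to recover it by rescaling the energy to infinity, in which limit the Kepler flow near a fixed smooth wall converges, after a time change, to the free flow, so that the high-energy asymptotics of any putative integral are governed by the free problem --- alternatively one may exploit the projective invariance of the Kepler problem used in \cite{Zhao} as a substitute for the affine structure of geodesic flow. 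Stage two: with the integral known to be quadratic, write it in closed form, impose its invariance under the reflection at $\mathcal{B}$ (as in the proof of Theorem~\ref{thm: centered_conic}, transported to the Kepler side through Lemma~\ref{lem: Gallavotti-Jauslin}) together with the Kepler equations of motion, and derive an ordinary differential equation for $\mathcal{B}$; the remaining task is to show that its only solutions are focused conics and lines.

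The main obstacle is unmistakably Stage one, combined with the fact that the target is a rigidity statement of Birkhoff--Poritsky strength: even for $s=0$ the classical conjecture is open, and the Kepler potential destroys the projective/affine structure on which the sharpest available results rest. A realistic intermediate goal, again via the duality, would be a perturbative version --- rigidity of focused conics among $C^{\infty}$-small perturbations of integrable Kepler billiards --- obtained by pulling the problem back to centered conics in Hooke billiards and adapting the perturbative rigidity methods of \cite{Kaloshin} to the harmonic oscillator.
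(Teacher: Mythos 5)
This statement is presented in the paper as an open conjecture, modeled on the Birkhoff--Poritsky conjecture; the paper offers no proof of it, so there is nothing to compare your argument against. Your proposal is likewise not a proof: its unconditional part (Stages one and two) is an explicitly speculative research program, and its conditional part reduces the statement to the immediately preceding Hooke conjecture, which is equally open. That reduction is a reasonable observation, but as written it has two concrete gaps. First, the energy bookkeeping: the duality sends the Kepler system at energy $-f$ to the Hooke system with force constant $f$ at the \emph{fixed} energy $-s$, so varying the Kepler energy varies the Hooke force constant, not the Hooke energy. Your bridge via $(z,w)\mapsto(cz,cw)$ does not repair this, because that symmetry carries the wall $\tilde{\mathcal{B}}$ to $c\tilde{\mathcal{B}}$; you end up with a one-parameter family of \emph{different} walls, each integrable at one energy, which is not the hypothesis of the Hooke conjecture (a single fixed wall integrable at all regular energies). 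The scaling that actually works keeps $z$ fixed and rescales $w$ and time, conjugating $(f,\tilde{\mathcal{B}},-s)$ with $(\lambda^2 f,\tilde{\mathcal{B}},-\lambda^2 s)$; combined with the family in $f$ this does reach all regular energies of a fixed oscillator while keeping the wall fixed, but that is not the argument you gave.

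Second, connectedness: the full preimage $\tilde{\mathcal{B}}$ of a smooth connected Kepler wall under $z\mapsto z^2$ is generally \emph{not} connected. The paper's own computations show that a focused parabola pulls back to a pair of parallel lines and a full focused hyperbola pulls back to a pair of confocal centered hyperbolae; only curves whose preimage branches join up (such as a focused ellipse enclosing the origin) pull back to connected curves. Since the Hooke conjecture is stated for connected reflection walls, it does not directly apply to $\tilde{\mathcal{B}}$, and your closing claim that the two conjectures are ``equivalent under this dictionary'' is therefore overstated without a version of the Hooke conjecture for suitable disconnected, centrally symmetric walls. In short: the statement remains a conjecture, and your proposal establishes at most a (repairable but currently gapped) implication from one open conjecture to another.
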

	\section{{Integrable Stark-type Billiards}}
	\label{sec: Stark} 
	\subsection{Separability and Integrability of Stark-type Billiards}
	 {In this section, we investigate some two degrees of freedom mechanical systems which are separable after the complex square mapping and integrable reflection walls for such systems. We consider some special class of systems with force function} given in the form of 
	\[
	{\frac{s}{|q|} + V(q), \quad V \in {C^{\infty}(\mathbb{R}^2 \setminus O, \mathbb{R})}.}
	\]
	{so that the Kepler problem is further modified by the additional influence from $V(q)$.} The Hamiltonian of such {a system is}
	\begin{equation}
	\label{eq: original_H}
	H = \frac{|p|^2}{2} {-} \frac{s}{|q|} - V(q_1, q_2).
	\end{equation}
	{On its fixed energy hypersurface $\{H + f = 0\}$ we may again transform the system by the complex square mapping after a proper time change as described in Theorem  \ref{thm: conformal_trans} which then leads to the system
		\[
	\hat{H} = \frac{|w|^2}{2} {-} s + f (z_1^2 + z_2^2) {-}(z_1^2 + z_2^2)V(z_1^2 - z_2^2, 2 z_1 z_2).
	\]}
	Now the transformed Hamiltonian $\hat{H}$ is separable in $(z_1, z_2)$ coordinates if and only if the term
	\[
	(z_1^2 + z_2^2)V(z_1^2 - z_2^2, 2 z_1 z_2)
	\]
	is separable in $(z_1, z_2)$ coordinates. When the function $V(q)$ satisfies this separability condition, we call such systems (\ref{eq: original_H}) \emph{Stark-type systems}. By using the separability of Stark-type systems, we obtain infinitely many integrable Stark-type billiard systems as we state in the following Theorem
	\begin{theorem}
		\label{thm: sterk-type}
		There exists infinitely many potential functions $V$ such that the system
		\[
		H = \frac{|p|^2}{2} {-} \frac{s}{|q|} - V(q_1, q_2)
		\]
		allows any focused parabola with the $q_1-$axis as the main axis as an integrable reflection wall.
	\end{theorem}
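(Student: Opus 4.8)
The plan is to run the Kepler--Hooke-type correspondence of Theorem \ref{thm: conformal_trans} (as already exploited in Theorem \ref{thm: duality} and in the proof of Theorem \ref{thm: focused_conic}) in reverse. Recall from the proof of Theorem \ref{thm: focused_conic}, Case 1, that under the complex square map $z \mapsto z^2 = q$ the line $\{z_1 = c\}$ with $c \neq 0$ is sent onto the focused parabola $q_1 = -q_2^2/(4c^2) + c^2$, and symmetrically $\{z_2 = c\}$ is sent onto $q_1 = q_2^2/(4c^2) - c^2$; together these are precisely the focused parabolas whose axis of symmetry is the $q_1$-axis, the value $c = 0$ giving a degenerate focused half-line treated by a limiting argument as in Corollary \ref{cor: lines_Hooke}. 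Hence it is enough to exhibit infinitely many Stark-type potentials $V$ for which the transformed system admits each coordinate line $\{z_1 = c\}$ (resp.\ $\{z_2 = c\}$) as an integrable reflection wall on its zero-energy hypersurface.

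Recall that on $\{H + f = 0\}$, after the time change of Theorem \ref{thm: conformal_trans}, the Hamiltonian \eqref{eq: original_H} is carried to
$$\hat{H} = \frac{|w|^2}{2} - s + f(z_1^2 + z_2^2) - (z_1^2 + z_2^2)\,V(z_1^2 - z_2^2,\, 2 z_1 z_2),$$
which is separable in $(z_1,z_2)$ exactly when $(z_1^2 + z_2^2)V(z_1^2 - z_2^2, 2z_1 z_2)$ is. I would take, for each integer $n \geq 2$, the potential $V_n$ determined by $(z_1^2 + z_2^2)\,V_n(z_1^2 - z_2^2, 2z_1 z_2) = z_1^{2n} + z_2^{2n}$; using $z_1^2 = (q_1 + |q|)/2$ and $z_2^2 = (|q| - q_1)/2$ this reads
$$V_n(q) = \frac{1}{|q|}\left[\left(\frac{q_1 + |q|}{2}\right)^{n} + \left(\frac{|q| - q_1}{2}\right)^{n}\right],$$
a $C^\infty$ function on $\mathbb{R}^2 \setminus O$, homogeneous of degree $n - 1$, so that the $V_n$ are pairwise non-proportional and the corresponding systems pairwise distinct. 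For instance $V_3(q) = q_1^2 + \tfrac14 q_2^2$, which gives the (apparently new) Kepler--Hooke billiard $H = \tfrac12|p|^2 - s/|q| - q_1^2 - \tfrac14 q_2^2$. More generally $V(q) = (\varphi_1(z_1) + \varphi_2(z_2))/|q|$ is admissible for any pair of even smooth functions $\varphi_1,\varphi_2$, since evenness lets one write $\varphi_i(z_i) = \tilde\varphi_i(z_i^2)$ with $z_i^2$ smooth on $\mathbb{R}^2 \setminus O$.

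With $V = V_n$ the transformed Hamiltonian splits as $\hat{H} = F_1 + F_2 - s$, where $F_1 := \tfrac12 w_1^2 + f z_1^2 - z_1^{2n}$ and $F_2 := \tfrac12 w_2^2 + f z_2^2 - z_2^{2n}$. Then $F_1$ Poisson-commutes with $\hat{H}$ and is functionally independent of it; moreover $F_1$ depends on $w_1$ only through $w_1^2$, hence is invariant under the elastic reflection at $\{z_1 = c\}$, which reverses $w_1$ and fixes $w_2,z_1,z_2$. So $F_1$ is a first integral of the transformed billiard with reflection wall $\{z_1 = c\}$ on its zero-energy hypersurface. Since $F_1$ is also invariant under the deck transformation $(z,w) \mapsto (-z,-w)$ of the double cover $z \mapsto z^2$, it descends to a function $G$ of the $q$-variables; by Theorem \ref{thm: conformal_trans} this $G$ is a first integral of the Kepler-type billiard $(\mathbb{C}\setminus O, g_{flat}, s/|q| + V_n, \mathcal{B})$ with $\mathcal{B}$ the focused parabola, independent of the energy $H$ and preserved by the flow on $\{H+f=0\}$ and by reflections at $\mathcal{B}$. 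As the construction is uniform in $f$, the parabola is an integrable reflection wall in the sense of the paper, and letting $n$ run over $\{2,3,4,\dots\}$ yields infinitely many such potentials; for the opposite opening direction one replaces $\{z_1 = c\}$ by $\{z_2 = c\}$ and $F_1$ by $F_2$, which is legitimate as $z_1^{2n}+z_2^{2n}$ is symmetric in $z_1,z_2$.

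The only delicate points are bookkeeping ones: verifying that the chosen $V_n$ (or the $V$ built from even $\varphi_i$) are genuinely smooth and pairwise inequivalent on $\mathbb{R}^2 \setminus O$, and making sure the separated first integral is even in $z$ so that it descends through the $2$-to-$1$ map $z \mapsto z^2$ — a property the paper's first integrals always enjoy. There is no real analytic obstacle; the degenerate endpoint $c = 0$ (focused half-line) needs only the separate limiting remark noted above.
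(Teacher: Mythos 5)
Your proposal is correct and follows essentially the same route as the paper: transform by the complex square mapping on a fixed energy level, use separability of the transformed Hamiltonian to get a first integral depending on $w_1$ only through $w_1^2$ (hence invariant under reflection at lines $\{z_1=c\}$), and observe that such lines map to focused parabolas with the $q_1$-axis as axis; your family $(z_1^2+z_2^2)V_n = z_1^{2n}+z_2^{2n}$ is an explicit instance of the paper's general recipe $g_1(z_1)+g_2(z_2)$ with $g_i$ even. The added care about descent through the double cover and the explicit pairwise-distinctness check are welcome refinements but not a different argument.
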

	\begin{proof}
		Assume that the system $H$ is {of} Stark-type, so that the transformed Hamiltonian is separable, 
		\emph{i.e.}
		\[
		\hat{H} = \frac{|w|^2}{2} + s {-} f (z_1^2 + z_2^2)+ (z_1^2 + z_2^2)V(z_1^2 - z_2^2, 2 z_1 z_2) = \hat{H}_1(z_1,w_1) + \hat{H}_2(z_2,w_2).
		\]
		 From its separability, this system has the additional first integral $\hat{H}_1(z_1,w_1)${, which is invariant under the reflections against a line which is parallel to the $z_1-$ or $z_2-$axis. Now since any lines which is parallel to the $z_1-$ or $z_2-$axis is transformed into a focused parabola in the form of}
		 \[
		 q_1 = -\frac{q_2^2}{4c^2} + c^2
		 \]
		 or 
		 \[
		 q_1 = \frac{q_2^2}{4c^2} - c^2
		 \]
		 by the mapping $z \mapsto z^2$, {by} Theorem \ref{thm: conformal_trans}, the original system allows any focused parabola with the $q_1-$axis as the main axis as an integrable reflection wall.
		  
		We are just left to show that there exists infinitely many Stark-type systems. {We assume that the function $V$ depending only on $z_1^2 - z_2^2$ and $2 z_1 z_2$ satisfies 
		\[
		(z_1^2 + z_2^2)V(z_1^2 - z_2^2, 2 z_1 z_2) = g_1(z_1) + g_2(z_2)
		\]
		for some smooth even functions $g_1, g_2 \in C^{\infty}(\mathbb{R}, \mathbb{R})$, i.e. $g_1 (-z_1)= g_1(z_1)$ and $g_2(-z_2) = g_2(z_2)$ for all $z_1, z_2 \in \mathbb{R}$.  We then define $V(z_1^2 - z_2^2, 2 z_1 z_2):= \frac{g_1(z_1) + g_2(z_2)}{z_1^2 + z_2^2}$, and we may then solve $V$ as a function of $q_{1}=z_1^2 - z_2^2$ and $q_{2}=2 z_1 z_2$, which is possible since $\frac{g_1(z_1) + g_2(z_2)}{z_1^2 + z_2^2}$ is centrally symmetric. }
	\end{proof}
	{This theorem is an analogue of \cite[Theorem 3.1]{Cieliebak-Frauenfelder-Koert} in the setting of mechanical billiards.}

	\subsection{Examples of Stark-type Billiard Systems}
	In the following, we {discuss some} concrete examples of Stark-type systems.

	\paragraph{Stark problem}
	Firstly we consider the Stark problem by setting $V(q)= g q_1$. The Stark problem can be interpreted as a planer system consists of gravitational potential and an external constant force field. The Hamiltonian of this problem is given by 
	\[
	H = \frac{|p|^2}{2} {-} g q_1 - \frac{s}{|q|}.
	\]
	which on its energy hypersurface $\{H + f = 0\}$ is then transformed into the system
	\[
	\hat{H}=\frac{|w|^2}{2} - g(z_1^4 - z_2^4) + f(z_1^2 + z_2^2) {-} s.
	\]
	which is separable in $(z_1,z_2)$ coordinates. {From this we get}
	\begin{cor}
		\label{cor: stark}
		The Stark problem  {$(\mathbb{R}^{2} \setminus O, g_{flat}, \frac{s}{|q|}+g q_{1}  )$}
		admit any focused parabola with the $q_1-$axis as the main axis as integrable reflection wall. {In particular, by setting respectively $s=0$ we get that any focused parabola with the $q_1-$axis as the main axis is an integrable reflection wall in a uniform gravitational field along the $q_{1}$-direction.}
	\end{cor}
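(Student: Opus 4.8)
The plan is to recognize the Stark problem as a Stark-type system in the sense defined above and then to invoke Theorem~\ref{thm: sterk-type} (which itself rests on Theorem~\ref{thm: conformal_trans} and Lemma~\ref{lem: time-reparametrization}) essentially verbatim. First I would take the perturbing term to be $V(q) = g q_1$ and compute, using $q_1 = z_1^2 - z_2^2$ and $q_2 = 2 z_1 z_2$, the transformed potential factor $(z_1^2 + z_2^2)\, V(z_1^2 - z_2^2,\, 2 z_1 z_2) = g(z_1^2 + z_2^2)(z_1^2 - z_2^2) = g z_1^4 - g z_2^4$. This is of the separable form $g_1(z_1) + g_2(z_2)$ with $g_1(z_1) = g z_1^4$ and $g_2(z_2) = -g z_2^4$, both even; hence the separability hypothesis in the definition of a Stark-type system is satisfied and the Stark problem qualifies. (Moreover $V=gq_1$ is already an honest smooth function of $q_1,q_2$, so no inversion step is needed.)

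Next I would record the transformed Hamiltonian on the energy level $\{H + f = 0\}$: applying the cotangent lift of $z \mapsto z^2 = q$ followed by the time change of Lemma~\ref{lem: time-reparametrization} turns the system into $\hat H = \tfrac{1}{2}|w|^2 - g(z_1^4 - z_2^4) + f(z_1^2 + z_2^2) - s$, which splits as $\hat H_1(z_1, w_1) + \hat H_2(z_2, w_2)$ with $\hat H_1 = \tfrac12 w_1^2 - g z_1^4 + f z_1^2$ and symmetrically for $\hat H_2$. The function $\hat H_1$ is then a first integral of the transformed billiard, independent of $\hat H$, and preserved under elastic reflection in any line parallel to the $z_1$- or $z_2$-axis, since such a reflection either leaves $w_1$ unchanged or merely reverses its sign. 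Transporting this back, the lines $z_1 = c$ and $z_2 = c$ ($c \in \mathbb{R} \setminus \{0\}$) are carried by $z \mapsto z^2$ onto the parabolae $q_1 = -q_2^2/(4c^2) + c^2$ and $q_1 = q_2^2/(4c^2) - c^2$, which are precisely the parabolae focused at the origin with the $q_1$-axis as axis; since every such parabola (opening either way) arises in this manner, Theorem~\ref{thm: conformal_trans} yields that the pullback of $\hat H_1$ along the cotangent lift is a nontrivial first integral of the original Stark billiard preserved by the reflection, which is the asserted integrability. Specializing $s = 0$ at the end recovers the statement for the pure uniform field.

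Since Theorem~\ref{thm: sterk-type} already packages all of this, the only genuine content to check here is that the hypotheses of Theorem~\ref{thm: conformal_trans} hold in this concrete instance: that $\phi:\mathbb{C}\setminus O \to \mathbb{C}\setminus O$, $z\mapsto z^2$, is a conformal $2$-to-$1$ regular map, that it sends the union of the two relevant coordinate lines into the given parabola, and that its cotangent lift followed by the reparametrization sends the $\{H+f=0\}$ hypersurface of the Stark problem to the zero level of $\hat H$. I expect the bookkeeping — keeping track of the two inverse branches and of the self-intersection or crossing points that the image parabola may carry — to be the only delicate point; everything else reduces to the short computations above. None of this is a real obstacle, as all these facts are immediate from the explicit formulas, so the corollary follows.
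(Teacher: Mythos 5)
Your proposal is correct and follows essentially the same route as the paper: verify that $V(q)=gq_1$ yields $(z_1^2+z_2^2)V = g(z_1^4-z_2^4)$, so the transformed Hamiltonian $\hat H = \tfrac12|w|^2 - g(z_1^4-z_2^4)+f(z_1^2+z_2^2)-s$ is separable, and then invoke Theorem~\ref{thm: sterk-type}. The extra unpacking you do (identifying $\hat H_1$ as the preserved integral and tracing the coordinate lines to the two families of focused parabolae) is exactly the content of the paper's proof of Theorem~\ref{thm: sterk-type}, so nothing is missing.
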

	Note that this argument on the integrability of the Stark problem using conformal transformation {provides} an alternative proof of the theorem of Korsch-Lang \cite{Korsch-Lang}.
	\paragraph{Frozen-Hill's Problem with  Centrifugal Correction}
	{Setting {$V= g q_1^2 + g q_2^2/4 $} gives rise to the \emph{so-called} frozen-Hill's problem with centrifugal corrections \cite{Cieliebak-Frauenfelder-Koert}.}
	The Hamiltonian of this system is given by 
	\[
	H = \frac{|p|^2}{2}  {-} \frac{s}{|q|} - g q_1^2 - \frac{g}{4} q_2^2.
	\]
	{Similarly as in the case of Stark problem, on its energy-hypersurface $\{H + f = 0\}$ we transform the system into}
	\[
	\hat{H}= \frac{|p|^2}{2}  {-} s  - (z_1^2 + z_2^2)(g(z_1^2 - z_2^2)^2 + g z_1^2 z_2^2 - f),
	\]
	{which can be written as}
	\[
	\hat{H}= \frac{|p|^2}{2}  {-} s - g(z_1^6 + z_2^6) + f(z_1^2 + z_2^2)
	\]
	which is separable in $(z_1,z_2)$ coordinates. {We thus get}
		\begin{cor}
		\label{cor: frozen-hill}
		The frozen Hill's problem with centrifugal corrections  {$(\mathbb{R}^{2} \setminus O, g_{flat}, \frac{s}{|q|}+g q_1^2 + \frac{g}{4} q_2^2)$}
		admits any focused parabola with the $q_1-$axis as the main axis as integrable reflection wall.
	\end{cor}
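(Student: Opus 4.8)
The plan is to obtain Corollary~\ref{cor: frozen-hill} as a concrete instance of Theorem~\ref{thm: sterk-type}, so the only thing to verify is that the potential $V(q) = g q_1^2 + \tfrac{g}{4} q_2^2$ satisfies the Stark-type separability condition, and then to transcribe the conclusion of that theorem. First I would substitute $q_1 = z_1^2 - z_2^2$ and $q_2 = 2 z_1 z_2$ into $V$ and multiply by the conformal factor $z_1^2 + z_2^2$ that arises from the time reparametrization of Lemma~\ref{lem: time-reparametrization}. A short computation gives
\[
(z_1^2 + z_2^2)\, V(z_1^2 - z_2^2, 2 z_1 z_2) = g\,(z_1^2 + z_2^2)\bigl((z_1^2 - z_2^2)^2 + z_1^2 z_2^2\bigr) = g\,(z_1^2 + z_2^2)(z_1^4 - z_1^2 z_2^2 + z_2^4) = g(z_1^6 + z_2^6),
\]
the last equality being the sum-of-cubes factorization $a^3 + b^3 = (a+b)(a^2 - a b + b^2)$ with $a = z_1^2$, $b = z_2^2$. (It is exactly the coefficient $\tfrac14$ in front of $q_2^2$ that makes this collapse happen.) Thus the transformed term is $g_1(z_1) + g_2(z_2)$ with the even functions $g_1(z_1) = g z_1^6$, $g_2(z_2) = g z_2^6$, so $V$ is of Stark type in the sense of Section~\ref{sec: Stark}.

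Second, I would record that, as in Theorem~\ref{thm: sterk-type}, on the energy hypersurface $\{H + f = 0\}$ the complex square mapping together with the time change furnished by Theorem~\ref{thm: duality} carries
\[
H = \frac{|p|^2}{2} - \frac{s}{|q|} - g q_1^2 - \frac{g}{4} q_2^2
\]
to the separated Hamiltonian
\[
\hat{H} = \frac{|w|^2}{2} - s - g(z_1^6 + z_2^6) + f(z_1^2 + z_2^2) = \hat{H}_1(z_1, w_1) + \hat{H}_2(z_2, w_2),
\]
so that $\hat{H}_1(z_1, w_1) = \tfrac{w_1^2}{2} - g z_1^6 + f z_1^2$ is a first integral of the transformed system, independent of $\hat{H}$ (since the complementary summand $\hat{H}_2$ is not a function of $\hat{H}_1$), and manifestly invariant under reflections in any line parallel to the $z_1$- or $z_2$-axis.

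Third, I would invoke the correspondence of reflection walls: under $z \mapsto z^2 = q$ the vertical line $z_1 = c$ maps to the focused parabola $q_1 = -\tfrac{q_2^2}{4 c^2} + c^2$ and the horizontal line $z_2 = c$ to $q_1 = \tfrac{q_2^2}{4 c^2} - c^2$, both with the $q_1$-axis as main axis and the origin as focus, and conversely every focused parabola with the $q_1$-axis as main axis arises this way. Since such a parabola avoids the origin, $\phi: z \mapsto z^2$ restricts to a genuine conformal $2$-to-$1$ covering on a neighbourhood of it with a smooth inverse branch, so Theorem~\ref{thm: conformal_trans} applies: pulling $\hat{H}_1$ back by the cotangent lift of an inverse branch yields an additional first integral of the billiard $(\mathbb{R}^2 \setminus O,\, g_{flat},\, \tfrac{s}{|q|} + g q_1^2 + \tfrac{g}{4} q_2^2,\, \mathcal{B})$ with $\mathcal{B}$ the chosen parabola, and it is preserved under the reflection because elastic reflections correspond to one another through a conformal map.

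I do not expect a real obstacle: the whole argument is a worked example of Theorem~\ref{thm: sterk-type}, entirely parallel to the treatment of the Stark problem in Corollary~\ref{cor: stark}. The only step that requires any care is the algebraic identity that reduces the transformed potential to $g(z_1^6 + z_2^6)$, and — as always in this circle of ideas — the verification that the chosen inverse branch of the complex square map is smooth near the reflection wall, which holds precisely because a focused parabola does not meet the origin.
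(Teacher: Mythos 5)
Your proposal is correct and follows essentially the same route as the paper: verify that $(z_1^2+z_2^2)V(z_1^2-z_2^2,2z_1z_2)=g(z_1^6+z_2^6)$ so the transformed Hamiltonian separates, and then apply the Stark-type machinery (Theorem~\ref{thm: sterk-type} together with the line-to-focused-parabola correspondence under $z\mapsto z^2$). The paper's proof is just this computation with the general argument left implicit, so your more detailed write-up adds nothing structurally new.
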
 

	\section{Integrable mechanical billiards of two-center problem}
	\label{sec: two_center_problem}
	We now consider the two center problem in the plane $\mathbb{C}$ with the two centers at $-1,1 \in \mathbb{C}$. The Hamiltonian of this system with mass factors {$m_{1}, m_{2}$ (which can take both signs),} is given by 
	\[
	{H=\frac{|p|^2}{2} - \frac{m_{1}}{|q-1|}  - \frac{m_{2}}{|q+1|}.}
	\]
        A classical way to show the integrability of this system uses its separability in elliptic-hyperbolic coordinates \cite{Varvoglis}. 
	Set $r_1= |q-1|, r_2=|q+1|$ and define the elliptic-hyperbolic coordinates as
	\[
	\xi = \frac{r_1 + r_2 }{2}, ~ \eta = \frac{r_1 - r_2 }{2}.
	\] 
	{In this coordinate system,  the curves $\xi = \hbox{const.}$ and $\eta = \hbox{const.}$ describe, respectively, ellipses and  branch of hyperbolae in the plane. Note that confocal ellipses in general intersect a branch of {confocal} hyperbola in $0$ or $2$ points and thus the change of coordinates $q \mapsto (\xi, \eta)$ is in general a 2-to-1 transformation.}
         The above Hamiltonian is then transformed into 
	\[
	{H( p_{\xi}, p_{\eta}, \xi, \eta) = \frac{1}{\xi^2 - \eta^2} \left( \frac{1}{2} (\xi^2 -1)^2 p_{\xi}^2 -{(m_1 + m_2)} \xi +  
	\frac{1}{2} (\eta^2 -1)^2 p_{\eta}^2 + { (m_{1}-m_{2})} \eta  \right).} 
	\] 
	in which $(p_{\xi}, p_{\eta})$ are the conjugate coordinate to $(\xi, \eta)$ respectively.
	By fixing $H = -f$ and changing the time by multiplying the Hamiltonian $H + f$ by $(\xi^2 - \eta^2)$ on the zero energy surface, we obtain the new Hamiltonian
	\[
	{K( p_{\xi}, p_{\eta}, \xi, \eta) = \frac{1}{2} (\xi^2 -1)^2 p_{\xi}^2 -{(m_1 + m_2)} \xi + \frac{1}{2} (\eta^2 -1)^2 p_{\eta}^2 +{ (m_{1}-m_{2})} \eta + f(\xi^2 - \eta^2)}
	\]
	{which is separable, showing its integrability.}
	{The curves $\xi = \hbox{const.}$ and $\eta = \hbox{const.}$ actually give integrable reflection walls of the two-center problem, as we shall establish below. Note that the elliptic-hyperbolic coordinate system is not conformal, therefore we have to use the following approach. }
	
	{{The conformal mapping that we are going to use for our purpose is the following one by Birkhoff \cite{Birkhoff_1915}}:
	\[
	z \mapsto q = \frac{z + z^{-1}}{2}, \mathbb{C}\backslash\{ 0\} \to \mathbb{C},
	\]
	{in real coordinates we have }
	\[
	q_1 = z_1 + \frac{z_1}{z_1^2 + z_2^2}, \quad
	q_2 =z_2 -  \frac{z_2}{z_1^2 + z_2^2}
	\]
	{which is conjugate to the complex square mapping by a M\"obius Transformation \cite{Waldvogel}, \cite{Cieliebak-Frauenfelder-Zhao}. }
	
	We use the cotangent lift of this mapping, given by the expression
	\[
	q = \frac{z + z^{-1}}{2} ,\quad p = \frac{2w}{1 - \bar{z}^{-2}}.
	\] 
	 to pull the shifted Hamiltonian $K = H-f$ back {to the expression}
	 \[
	 \frac{2|z|^4 |w|^2}{|z+1|^2 |z-1|^2} - \frac{2 m_1 |z|}{|z-1|^2} - \frac{2 m_2 |z|}{|z+1|^2} +f.
	 \]
	{By changing time on the zero-energy hypersurface, we obtain the new Hamiltonian}
	\[
	\hat{K}=\frac{|w|^2}{2} - \frac{m_1 |z+1|^2}{2|z|^3} - \frac{m_2 |z-1|^2}{2|z|^3} + f \frac{|z-1|^2|z+1|^2}{4|z|^4}=0.
	\]
	}
	\begin{prop}\label{prop: transformation 2CP}
		The mapping $z \mapsto  \frac{z + z^{-1}}{2}$ {pulls} confocal ellipses back to two centered circles, and {pulls} confocal hyperbolae to {a pair of lines passing through the center.}
	\end{prop}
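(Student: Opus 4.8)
The plan is to pass to polar coordinates on the source plane and eliminate one variable at a time. Write $z = r e^{i\theta}$ with $r>0$, so that $z^{-1} = r^{-1} e^{-i\theta}$ and the map $\phi\colon z \mapsto q = \tfrac{1}{2}(z + z^{-1})$ becomes, in real coordinates,
\[
q_1 = \frac{1}{2}\left(r + \frac{1}{r}\right)\cos\theta, \qquad q_2 = \frac{1}{2}\left(r - \frac{1}{r}\right)\sin\theta .
\]
First I would treat the ellipses. Fixing a circle $\{|z| = r\}$ with $r \neq 1$ and eliminating $\theta$ via $\cos^2\theta + \sin^2\theta = 1$ gives
\[
\frac{q_1^2}{A^2} + \frac{q_2^2}{B^2} = 1, \qquad A = \frac{1}{2}\left(r + \frac{1}{r}\right), \quad B = \frac{1}{2}\left|r - \frac{1}{r}\right|,
\]
an ellipse with $A^2 - B^2 = 1$, hence with foci $\pm 1$ for every $r$. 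Since $r$ and $1/r$ produce the same ellipse (and $r = 1$ collapses it to the focal segment $[-1,1]$), and since any ellipse of semi-major axis $A>1$ with foci $\pm 1$ comes from the unique pair $\{r,1/r\}$ solving $r + r^{-1} = 2A$, the preimage $\phi^{-1}$ of a confocal ellipse is precisely a pair of concentric circles centered at the origin.

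Next I would treat the hyperbolae. A line through the origin is $\{\arg z = \theta_0\}\cup\{\arg z = \theta_0 + \pi\}$; letting $r$ run over $(0,\infty)$ and eliminating $r$ via $\tfrac14(r+r^{-1})^2 - \tfrac14(r-r^{-1})^2 = 1$ gives
\[
\frac{q_1^2}{\cos^2\theta_0} - \frac{q_2^2}{\sin^2\theta_0} = 1,
\]
a hyperbola with $\cos^2\theta_0 + \sin^2\theta_0 = 1$, hence again with foci $\pm 1$; the bound $\tfrac12(r+r^{-1})\ge 1$ shows the two rays of the line sweep out its two branches. Because the lines $\arg z = \theta_0$ and $\arg z = -\theta_0$ give the same hyperbola, the preimage is a \emph{pair} of lines through the origin; the values $\theta_0 \in \{0,\pi/2\}$ give the coordinate axes, mapping onto the two degenerate confocal hyperbolae $\{q_2 = 0,\ |q_1|\ge 1\}$ and $\{q_1 = 0\}$. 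I would close by observing that every confocal ellipse and every confocal hyperbola with foci $\pm 1$ occurs this way, so $\phi$ carries the concentric-circle / origin-line web onto the full confocal web.

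I do not expect a genuine obstacle: the computation is completely elementary. The only points needing care are bookkeeping ones — the two-to-one nature of $\phi$ (which is exactly why one gets \emph{two} circles and a \emph{pair} of lines, the second preimage of $z=re^{i\theta}$ being $\tfrac{1}{r}e^{-i\theta}$), the branching of $\phi$ over the foci $\pm 1$, and the three degenerate conics (the focal segment and the two focal rays). One could instead route the argument through the M\"obius conjugacy recorded in the text, using $\tfrac{q-1}{q+1} = \bigl(\tfrac{z-1}{z+1}\bigr)^2$ together with the elementary behaviour of $w\mapsto w^2$ on origin-centered circles and rays, but the direct polar computation is shorter and makes confocality visible at once.
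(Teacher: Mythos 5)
Your proof is correct, and it takes a genuinely different route from the paper's. The paper substitutes the inverse of the Birkhoff map directly into the equation $\frac{q_1^2}{b^2+1}+\frac{q_2^2}{b^2}=1$ of a confocal conic and factors the resulting expression as a product of $b^2z_1^2+(b^2+1)z_2^2$ with a quadratic in $z_1^2+z_2^2$; it then observes that for $b^2>0$ only the quadratic factor has real zeros (two centered circles), while for $-1<b^2<0$ only the first factor does (a pair of lines through the origin). This gives a single computation covering both families by analytic continuation in the parameter $b$, and exhibits the preimage directly as a zero set. You instead run the classical Joukowski-map computation: parametrize circles $|z|=r$ and radial lines $\arg z=\theta_0$ in polar form, show their images satisfy $\frac{q_1^2}{A^2}+\frac{q_2^2}{B^2}=1$ with $A^2-B^2=1$ and $\frac{q_1^2}{\cos^2\theta_0}-\frac{q_2^2}{\sin^2\theta_0}=1$ with $\cos^2\theta_0+\sin^2\theta_0=1$, and then upgrade the forward-image statement to the preimage statement via the two-to-one structure $z\leftrightarrow z^{-1}$ together with the fact that the circles (resp.\ the lines) foliate $\mathbb{C}\setminus\{0\}$ and exhaust the confocal family; this last bookkeeping step is needed and you supply it. Your approach is more elementary and makes the confocality and the degenerations (the focal segment at $r=1$, the focal rays and the imaginary axis at $\theta_0\in\{0,\pi/2\}$) transparent; it also shows explicitly that the elliptic--hyperbolic coordinates of Section 5 are just the pushed-forward polar coordinates, which the paper uses implicitly when it separates $\hat K$ in polar form. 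The paper's factorization, by contrast, is shorter to state once the algebra is done and handles both conic types in one formula.
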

	\begin{proof}
		A confocal ellipse is given by the equation
		\begin{equation} \label{equation: confocal ellipse}
		\frac{q_1^2}{b^2 +1} + \frac{q_2^2}{b^2} -1 = 0.
		\end{equation}
		{with $b >0$ as a parameter. }
		
		With the conformal mapping {we use, the LHS of the above equation} is transformed into 
		\begin{align*}
		&\frac{(b^2 z_1^2 +  (b^2 + 1)z_2^2)((z_1^2 + z_2^2)^2 + 1) + (2 b^2 z_1^2 -2(b^2 + 1)z_2^2) (z_1^2 + z_2^2) }{4b^2(b^2 + 1)(z_1^2 + z_2^2)^2}-1\\
		=& \frac{(b^2 z_1^2 +  (b^2 + 1)z_2^2)((z_1^2 + z_2^2)^2 - 2(2b^2 + 1)(z_1^2 + z_2 ^ 2)+ 1)}{4b^2(b^2 + 1)(z_1^2 + z_2^2)^2}.
		\end{align*}
		and thus the transformed equation is equivalent to
		$$(z_1^2 + z_2^2)^2 - 2(2b^2 + 1)(z_1^2 + z_2 ^ 2)+ 1=0$$
		{which, seen as a quadratic equation of $z_1^2 + z_2 ^ 2$, has two positive solutions, giving rise to two centered circles.}
		
		For confocal hyperbolae, we set $b$ {in \eqref{equation: confocal ellipse}} as a purely imaginary number such that $b^2 + 1 >0$, then the equation
		\[
		(z_1^2 + z_2^2)^2 - 2(2b^2 + 1)(z_1^2 + z_2 ^ 2)+ 1=0
		\]
		{has no real-valued solutions and we get that the transformed equation of \eqref{equation: confocal ellipse} is equivalent to }
		$$(b^2 z_1^2 +  (b^2 + 1)z_2^2)=0$$ 
		{which describes a pair of lines passing through the origin.}
		Note that they are {the} two asymptotes of the confocal hyperbola
		\[
		\frac{{z_1}^2}{b^2 +1} + \frac{{z_2}^2}{b^2} -1 = 0.
		\]
	\end{proof}
	{The separability of the (properly-transformed) two-center Hamiltonian in the  elliptic-hyperbolic coordinates is thus equivalent to the separability of $\hat{K}$ in polar coordinates. We now verify the latter.}

	{We set $z = r e^{ i \theta}$, and denote the conjugate momenta  by $p_{\theta}, p_r$ respectively.}
	{ Explicitly we have $w =p_{r} \bm{e}_{r} + \dfrac{p_{\theta}}{r} \bm{e}_{\theta} $.}
	The transformed Hamiltonian ${\hat{K}}$ into the polar coordinates $(p_r, p_\theta, r, \theta)$ with zero energy zero becomes
	\[
	\hat{K}=\frac{1}{2} \left( p_r^2 + \frac{p_{\theta}^2}{r^2} -\frac{2(m_1 - m_2)\cos \theta}{r^2} -\frac{(m_1 + m_2 )(r^2 + 1)}{r^3} + 2f\frac{r^4 + r^2 + 1 -4 r^2 \cos^2 \theta } {r^4} \right)=0.
	\]
	By multiplying this by $2r^2$, we obtain
	\[ 
	r^2 p_r^2 + p_{\theta}^2  -2(m_1 - m_2)\cos \theta -\frac{(m_1 + m_2 )(r^2 + 1)}{r} + 2f\frac{r^4 + r^2 + 1} {r^2} - 8 f \cos^2 \theta =0,
	\]
	{which is now seen to be separable. From this we have the following additional first integral}
	\[
	r^2 p_r^2  -\frac{(m_1 + m_2 )(r^2 + 1)}{r}+ 2f\frac{r^4 + r^2 + 1} {r^2},
	\]
	showing the integrability of the system.
	
	In{ the} next lemma, we {establish} the integrability of centered circular reflection walls and centered line reflection walls {in} this
	system.
	
	\begin{lemma}
		\label{lem:2CP}
		{Any combination of centered circles and lines passing through the origin are integrable reflection walls for the system $\hat{K}$ (at its zero-energy level).}
	\end{lemma}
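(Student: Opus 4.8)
The plan is to use the explicit additional first integral already extracted from the separated form of $\hat K$, namely
\[
G(p_r,p_\theta,r,\theta) := r^2 p_r^2 - \frac{(m_1+m_2)(r^2+1)}{r} + 2f\,\frac{r^4+r^2+1}{r^2},
\]
and to verify directly that it survives the elastic reflections at centered circles and at lines through the origin. The crucial structural feature is that $G$ depends only on $r$ and on $p_r^2$: it is even in $p_r$ and completely independent of $p_\theta$ and of $\theta$. From the separation carried out above we already know that $G$ is constant along the flow of $\hat K$ on $\{\hat K=0\}$ and is functionally independent of the energy (which involves $p_\theta$ and $\cos\theta$), so only its behaviour under reflection needs to be examined.

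First I would record the elastic reflection law in polar coordinates. Since $\hat K$ has the standard kinetic term $\tfrac12|w|^2$ with $w = p_r\,\bm{e}_r + \tfrac{p_\theta}{r}\,\bm{e}_\theta$, the velocity vector coincides with $w$ and $\{\bm{e}_r,\bm{e}_\theta\}$ is orthonormal. At a point of the circle $r=r_0$ the unit normal is $\bm{e}_r$ and the unit tangent is $\bm{e}_\theta$, so $w \mapsto w - 2(w\cdot\bm{e}_r)\,\bm{e}_r$ amounts to $p_r \mapsto -p_r$, $p_\theta \mapsto p_\theta$, with $r$ and $\theta$ unchanged. Because $G$ is even in $p_r$, its value is unchanged by such a reflection.

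Next, at a point of the line $\theta=\theta_0$ through the origin the unit tangent is $\bm{e}_r$ and the unit normal is $\bm{e}_\theta$, so the reflection sends $p_\theta \mapsto -p_\theta$ and fixes $p_r$, $r$, $\theta$; as $G$ does not involve $p_\theta$ at all, it is trivially invariant. Finally, since one and the same $G$ is preserved both by the flow and by reflection at every centered circle and at every line through the origin, it is preserved by reflection at any finite union of such curves, which gives the asserted integrability. The only point needing a little care — and the closest thing here to an obstacle — is the bookkeeping of the reflection in the non-Cartesian polar frame: one must use that $\{\bm{e}_r,\bm{e}_\theta\}$ is orthonormal so that the flat elastic reflection merely flips the coefficient of the normal basis vector; once this is noted, both verifications are immediate. (Via Theorem \ref{thm: conformal_trans} and Proposition \ref{prop: transformation 2CP} this then transports back to confocal ellipses and to the asymptotes of confocal hyperbolae as integrable reflection walls for the original two-center problem, but that is the content of the results that follow rather than of this lemma.)
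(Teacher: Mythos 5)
Your proposal is correct and follows essentially the same route as the paper: the paper's proof likewise reduces the lemma to checking that $p_r^2$ is unchanged by the elastic reflection, since at a centered circle the reflection flips the sign of the $r$-component of $w$ while preserving the $\theta$-component, and at a line through the origin it does the opposite. Your additional care in writing out the separated first integral $G$ explicitly and noting it is even in $p_r$ and independent of $p_\theta$ and $\theta$ only makes explicit what the paper leaves implicit.
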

	\begin{proof}
		It is sufficient to check invariance of $p_r^2$ before and after the reflection against the reflection walls. For centered circular reflection walls, {the} $\theta$-component $r \dot{\theta}$ of the conjugate momenta $w$ is preserved and the sign of {the} $r$-component $\dot{r}$ is switched after the reflection. {At a line passing through the origin,} {the} $r$-component is preserved and the sign of {the} $\theta$-component is switched after the reflection. Hence, in both cases, the value $p_r^2= \dot{r}^2$ is unchanged before and {after the reflection agains these reflection walls.}
	\end{proof}

		We now deduce the following theorem for billiards defined with the two-center problems: 
		\begin{theorem}
			The two center problem in the plane admits any {combination of} confocal ellipses and confocal hyperbolae as an integrable reflection wall. 
		\end{theorem}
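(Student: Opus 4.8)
The plan is to transfer the integrability of the separable system $\hat{K}$ back to the two-center problem through Birkhoff's conformal mapping $z \mapsto q = (z+z^{-1})/2$, exactly as prepared in the discussion preceding this theorem, by invoking the second half of Theorem \ref{thm: conformal_trans} (the ``inverse branch'' direction).

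First I would fix a regular value $-f$ of the two-center Hamiltonian $H$ and record that, on the hypersurface $\{H + f = 0\}$, the cotangent lift of the Birkhoff map composed with the time reparametrization of Lemma \ref{lem: time-reparametrization} carries $H$ to the natural Hamiltonian $\hat{K}$ (of the form $|w|^2/2$ plus a function of $z$) displayed above, on its zero-energy hypersurface. Since multiplication of a Hamiltonian by a positive function does not change its zero level set, this zero-energy hypersurface coincides with the image of $\{H + f = 0\}$ under the cotangent lift alone, so the energy-hypersurface hypothesis of Theorem \ref{thm: conformal_trans} is satisfied with $e' = -f$ and $e = 0$. As computed above, $\hat{K}$ is separable in polar coordinates $(r,\theta)$ and admits on $\{\hat{K}=0\}$ the additional first integral $r^2 p_r^2 - (m_1+m_2)(r^2+1)/r + 2f(r^4+r^2+1)/r^2$, so the system with Hamiltonian $\hat{K}$ (at energy zero) is integrable.

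Second I would identify the reflection walls on the two sides. By Proposition \ref{prop: transformation 2CP}, the preimage under the Birkhoff map of a confocal ellipse is a pair of centered circles and the preimage of a confocal hyperbola is contained in a pair of lines through the origin; hence the preimage of any combination of confocal ellipses and hyperbolae is a combination of centered circles and lines through the origin. By Lemma \ref{lem:2CP} such a combination is an integrable reflection wall for $\hat{K}$ at its zero-energy level, the reflection-invariant quantity being $p_r^2$, and therefore the first integral above. The two inverse branches $\psi_{\pm}$ of the Birkhoff map (onto $\{|z|<1\}$ and $\{|z|>1\}$) then each satisfy $\psi_{\pm}(\mathcal{B}') \subset \mathcal{B}$, where $\mathcal{B}'$ is the chosen combination of confocal conics and $\mathcal{B}$ the corresponding combination of centered circles and origin-lines: the two preimage circles of a confocal ellipse have reciprocal radii and hence fall one into each branch domain, while the preimage lines of a confocal hyperbola are invariant under $z \mapsto z^{-1}$. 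Applying the ``inverse branch'' part of Theorem \ref{thm: conformal_trans} with $\phi$ the Birkhoff map (with $M$ the relevant $z$-domain, $M'$ the $q$-domain), and pulling the first integral back by $\psi_{\pm}$, one concludes that the two-center billiard with wall $\mathcal{B}'$ is integrable on $\{H = -f\}$. Since $f$ was an arbitrary regular value, the theorem follows.

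The step I expect to require the most care --- everything else being a matter of assembling results already proved --- is the treatment of the singular locus of the Birkhoff map: the branch points $z = \pm 1$, mapping to the two centers $q = \pm 1$, and $z = 0$, corresponding to $q = \infty$. One must argue that removing these points is harmless: the centers $\pm 1$ are foci of every confocal conic and so do not lie on $\mathcal{B}'$, while $z = 0$ lies on the preimage lines of the confocal hyperbolae, where the billiard map may degenerate; but, as emphasized in Section \ref{sec: conformal_trans}, integrability does not require the billiard map to be everywhere defined, and the first integral, being an algebraic expression in the original coordinates on the relevant energy hypersurface, extends across this locus. A secondary (and immediate) point to verify is that $\phi$ is genuinely $2$-to-$1$ and conformal off this locus, which follows from $\phi(z) = \phi(z^{-1})$ and $\phi'(z) = (1 - z^{-2})/2$.
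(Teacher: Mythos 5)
Your proposal is correct and follows essentially the same route as the paper, whose proof simply cites Proposition \ref{prop: transformation 2CP} and Lemma \ref{lem:2CP} together with the conformal-transfer mechanism of Theorem \ref{thm: conformal_trans} set up in the preceding discussion. The extra details you supply (reciprocal radii of the two preimage circles, invariance of the preimage lines under $z\mapsto z^{-1}$, and the treatment of the branch points $z=\pm 1,0$) are correct refinements of the same argument rather than a different method.
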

		\begin{proof}
			{This follows directly from Proposition \ref{prop: transformation 2CP} and Lemma \ref{lem:2CP}.}
		\end{proof}
	
		\begin{rem}
			 {By letting one mass parameter in the two center problem be zero, we obtain the the Kepler billiards with any combination of confocal focused ellipses/hyperbolae as an integrable reflection wall directly form the theorem above. This thus provides an alternative proof of Corollary \ref{cor: confocal_Kepler}.}
		\end{rem}
	 
	 \begin{rem}
	 		{By letting one mass parameter be zero and sending it to infinity, we obtain the integrable Kepler billiards with a focused parabola as the limiting cases from focused ellipses or hyperbolae. Additionally we may also deduce the same result for focused parabolae with collinear major axes as the limiting case from combinations of focused ellipses/hyperbolae. This argument provides an alternative proof of Corollary \ref{cor: confocal_parabola_halfline_Kepler}.}
	 \end{rem}
		
	{\bf Acknowledgement}	
	We benefit from discussions with Alain Albouy and Gert Heckman. A.T. is supported by Masason Foundation. L.Z. is supported by DFG ZH 605/1-1. 
	
	\newpage
	
	
	
	\appendix
	\section{Integrability of conic section boundaries of free planar billiards}
	\label{sec: Joachimsthal}
		Here we will give a proof for integrability of conic section boundaries in free motion case.

	 Consider the elliptic/hyperbolic reflection walls in the form
		\[
		\frac{x_1^2}{a^2} \pm \frac{x_2^2}{b^2} = 1.
		\]
		The classical Joachimsthal integral can be written in the form of the product of the velocity and normal vector as follows:
		\[
		J(x,v) := -\frac{1}{2} \langle v, \nabla f(x)  \rangle,
		\]
		where $f = x_1^2/a^2 \pm x_2^2/b^2$ and $x$ lies in $f = 1$.
		Let $(x,v)$ be the pair of reflection point and the reflected vector at $x$, and let $(x',v')$ be the consecutive reflection point and the reflected vector at $x'$. Then we will check that 
		\[
		J(v,x) - J(v',x')= -\frac{1}{2} \langle v, \nabla f(x)  \rangle +  \frac{1}{2} \langle v', \nabla f(x')  \rangle = 0.
		\]
		Since the reflection is elastic, the vector $v + v'$ is tangent to the ellipse/hyperbola at $x'$ and $\nabla f(x')$ is normal to the ellipse/hyperbola at $x'$, hence we have
		\[
		\langle v +v', \nabla f(x')  \rangle = 0.
		\]
		Using this to substitute $v'$, we only need to show that
		\[
		\langle v, \nabla f(x) +  \nabla f(x')  \rangle = 0.
		\]
		Additionally, we know that $v$ and $x -x'$ agree up to some scaling, hence it suffices to show that
		\[
		\langle x - x', \nabla f(x) +  \nabla f(x')  \rangle = 0.
		\]
		Now we write 
		\[
		\langle x - x', \nabla f(x) +  \nabla f(x')  \rangle = \langle x, \nabla f(x)  \rangle +  \langle x, \nabla f(x')  \rangle -  \langle x', \nabla f(x)  \rangle - \langle x', \nabla f(x')  \rangle.
		\]
		Notice that $x$ and $x'$ are points of the ellipse/hyperbola $f= c$, therefore we have $\langle x, \nabla f(x)  \rangle =  \langle x', \nabla f(x')  \rangle = 2$. Also, we get $ \langle x, \nabla f(x')  \rangle -  \langle x', \nabla f(x)  \rangle = 0$ from the direct computation. As the conclusion, $J$ is preserved under the reflection at the elliptic/hyperbolic reflection wall. Note that in Proposition \ref{prop:free_conic} we give an alternative proof for the integrability of elliptic/hyperbolic reflection walls.
	
		Next, we consider parabolic reflection walls. In this case, the additional first integral is given by 
		\[
		\gamma = C \cdot \sin \theta,
		\]
		where $C$ is the angular momentum with respect to the focus of the parabola, and $\theta$ is the angle that the incoming vector makes in a counter-clockwise direction with the axis of symmetry {of} the parabola. We here employ the part of the proof for the integrability of confocal parabolae boundaries appears in \cite{Poet}. There are three cases to consider; (1) the incoming vector cuts the segment between the apex and the focus of the parabola, (2) goes through the outside of the focus, (3) passes the focus, or goes parallel to the axis of symmetry. 
		We here describe the proof for the second case. Figure \ref{fig:parabola_consv} illustrates this case (2); the incoming line segment $IB$ goes through the outside of the focus and gets reflected back at $B$. The outgoing direction is given by $BR$. Denote the focus of the parabola by $F$ and set the perpendicular line from $F$ to the line $IB$ and denote the intersection point by $K$. Likewise, we denote the intersection point of the line $BR$ and the perpendicular line from $F$ to $IB$, by $L$. Construct the line $BG$ which is parallel to the axis of symmetry. Additionally, let $BN$ {be} normal to the parabola at $B$. Set {$C$} and {$C'$} be the angular {momenta} with respect to the focus of the incoming and outgoing vectors, respectively. Then the quantities before and after reflection $\gamma, \gamma'$ are given by
		\[
		\gamma = C \cdot \sin \theta, \quad \gamma' = C' \cdot \sin \theta',
		\]
		where $\theta, \theta'$ are the {angles made by} $IB$ and $BR$ from the axis of symmetry, respectively. We will show that $\gamma = \gamma'$. {For this to hold, it is enough to show this while replacing the angular momenta $C$ and $C'$ respectively by $|FK|$ and $|FL|$ in the expression}. {Set $\angle FBN = \angle NBG = \alpha$ and $\angle IBF = \angle RBG= \beta$ in which the angles are non-oriented}. Then we have
		\[
		\sin \theta = \sin (2 \alpha - \beta), \quad \sin \theta' = \sin \beta,
		\] 
		and
		\[
		|FK| = |FB|\sin \beta,\quad |FL| = |FB| \sin (2 \alpha - \beta).
		\]
		Thus, we get 
		\[
		\gamma = |FB|\sin \beta \sin (2 \alpha - \beta) = \gamma'.
		\]
		The proof for the case $(1)$ proceeds in a similar way and its details are given in \cite{Poet}. The proof for the case (3) immediately follows from the fact that the parallel line to the axis is reflected directly to the focus and vice versa.

	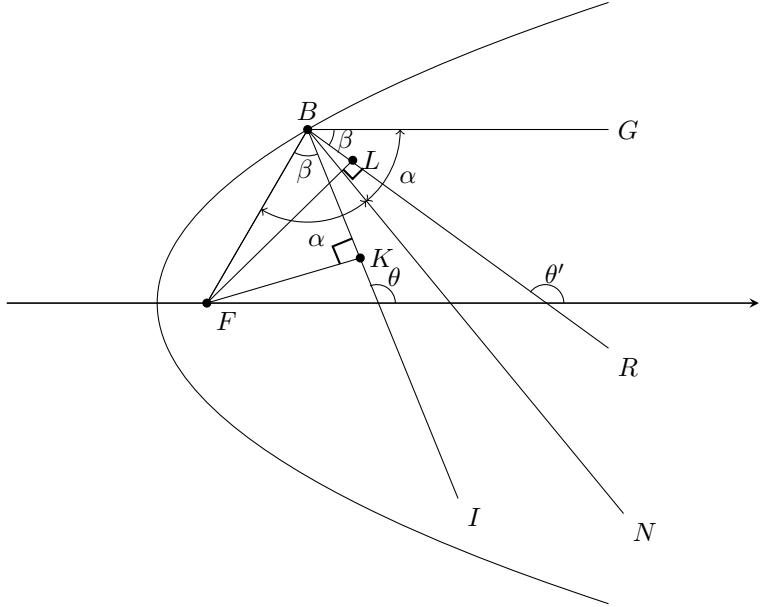
\begin{figure}
		\begin{tikzpicture}[scale=2.0]
		\draw[->,>=stealth,semithick] (-1,0)--(4,0); 
		\coordinate (F) at (0.33,0);
		\draw (F) node[below right]{$F$}; 
		\fill[black] (F) circle (0.03);
		\draw[rotate=-90] (-2,3) parabola bend (0,0) (2,3);
		\coordinate (B) at (1,1.1547);
		\fill[black] (B) circle (0.03);
		\draw (B) node[above]{$B$}; 
		\coordinate (I) at (2,-1.3);
		\draw (I) node[below right]{$I$}; 
		\coordinate (R) at (3,-0.3);
		\draw (R) node[below right]{$R$};
		\draw (I)--(B);
		\draw (B)--(R);
		\coordinate (N) at (3.1,-1.4);
		\draw (N) node[below right]{$N$};
		\draw (B)--(N);
		\coordinate (G) at (3,1.1547);
		\draw (G) node[right]{$G$};
		\draw (B)--(G);
		\draw (B)--(F);
		\coordinate (K) at (1.35,0.3);
		\fill[black] (K) circle (0.03);
		\draw (K) node[right]{$K$};
		\draw (K)--(F)--(B);
		\draw[thick] ($(K)!4pt!(B)$)--($(K)!4pt!(B)!4pt!90:(B)$)--($(K)!4pt!(F)$);
		\coordinate (L) at (1.3,0.95);
		\fill[black] (L) circle (0.03);
		\draw (L) node[right]{$L$};
		\draw (L)--(F);
		\draw[thick] ($(L)!2.5pt!(F)$)--($(L)!2.5pt!(F)!2.5pt!90:(F)$)--($(L)!2.5pt!(R)$);
		\draw pic["$\beta$", draw, angle eccentricity=1.6, angle radius=10] {angle=F--B--I};
		\draw pic["$\alpha$", draw,<->, angle eccentricity=1.2, angle radius=35] {angle=F--B--N};
		\draw pic["$\beta$", draw, angle eccentricity=1.5, angle radius=10] {angle=R--B--G};
		\draw pic["$\alpha$", draw,<->, angle eccentricity=1.2, angle radius=35] {angle=N--B--G};
		\coordinate (Z) at (20,0);
		\coordinate (X) at (1.46,0);
		\coordinate (Y) at (2.58,0);
		\draw pic["$\theta$", draw, angle eccentricity=1.7, angle radius=7] {angle=Z--X--B};
		\draw pic["$\theta'$", draw, angle eccentricity=1.7, angle radius=7] {angle=Z--Y--B};
		\end{tikzpicture}
		\caption{illustration of parabolic boundary in case (2) }
		\label{fig:parabola_consv}
	\end{figure}

	For parallel two lines reflection walls, it is trivial that the reflection angle in preserved.
	
	As a conclusion, any conic section including degenerate ones are integrable reflection walls for free billiards.

	\section{Invariance of Transformed Jaochimsthal First Integral}
	\label{sec: transformed_Jaochimsthal}
	{
		{In this Appendix, we consider a special case of Theorem \ref{thm: duality} with $k=2$ and $s=0$, and we verify the integrability of the mechanical billiard system thus obtained on its zero-energy level with direct computation. }
		
		{When $k=2$ and $s=0$, the conformal mapping $z \mapsto z^2$ gives a transformation between the free motion 
		\[
		H= \frac{|p|^2}{2} =  f 
		\]
		on its $f$-energy level, {$f>0$} and the repulsive Hooke system
		\[
		\hat{H}=\frac{|w|^2}{2} - f |z|^2 = 0
		\]
		on its zero-energy level. }
		
		We take a non-centered ellipse
		\[
		\frac{(q_1-c_1)^2}{a^2} + \frac{(q_2-c_2)^2}{b^2}=1
		\]
		{which is an} integrable reflection wall for free billiard. As one can see in Appendix \ref{sec: Joachimsthal}, the Joachimsthal first integral is given by 
		\[
		\frac{\tilde{q_1} p_1}{a^2} + \frac{\tilde{q_2} p_2}{b^2},
		\]
		where {$(\tilde{q_1}, \tilde{q_2})$} is the {point of reflection}. {For our purpose, to simplify the computations, we consider the squared Joachimsthal first integral which we interpolate along the free flow as}:
		\[
		J:= \frac{(b + c_2 -q_2)(b - c_2 + q_2) p_1^2 + 2 p_2 (-q_2 + c_2)(- q_1 + c_1)p_1 + p_2^2 (a + c_1 - q_1)(a -c_1  + q_2)}{a^2 b^2}.
		\]
		By the mapping $z \mapsto z^2$, the non-centered elliptic reflection wall is transformed into 
		\[
		\frac{(z_1^2 - z_2^2 -c_1)^2}{a^2} + \frac{(2z_1 z_2 -c_2)^2}{b^2}=1.
		\]
		{We now transform the first integral $J$ by the same mapping. With Maple, we obtained the following form:}
		\begin{align*}
		&\hat{J}= \frac{1}{(z_1^2 + z_2^2)a^2 b^2}\cdot (-w_2^2 z_1^6 + 2 w_1 w_2 z_1^5 z_2 + ((-w_1^2 - 2 w_2^2)z_2^2 + 2 c_1 w_2^2 - 2 c_2 w_1 w2)z_1^4 \\
		&+ 2(2 w_2 w_1 z_2^2 + c_2 (w_1^2 + w_2^2)) z_2 z_1^3 + ((-2 w_1^2 - w_2^2) z_2^4 + (-2 c_1 w_1^2 + 2 c_1 w_2^2 - 4 c_2 w_1 w_2) z_2^2 \\
		&+ w_1^2 (b^2 - c_2^2) + 2 c_1 c_2 w_1 w_2 + w_2^2 (a^2 - c_1^2)) z_1^2 + 2 (w_1 w_2 z_2^4 + c_2 (w_1^2 + w_2^2) z_2^2\\
		& + c_1 c_2 w_1^2 + w_1(a^2 - b^2 - c_1^2 + c_2^2) w_2 - c_1 c_2 w_2^2) z_2 z_1 + (-w_1^2 z_2^4 + (-2 c_1 w_1^2 - 2 c_2 w_1 w_2)z_2^2  \\
		&+ (a^2 - c_1^2) w_1^2 - 2 c_1 c_2 w_1 w_2 + w_2^2(b^2 - c_2^2)) z_2^2).
		\end{align*}
		{A} direct {(but unnecessary)} computation {with Maple} shows that 
		\[
		\{\hat{H}, \hat{J}\}= \sum_{i =1,2} \frac{\partial \hat{H}}{\partial w_i} \frac{\partial \hat{J}}{\partial z_i} - \frac{\partial \hat{H}}{\partial z_i} \frac{\partial \hat{J}}{\partial w_i}=0
		\]
		on {$\{\hat{H}=0\}$}. This means that $\hat{J}$ is invariant along the {transformed} flow on {$\{\hat{H}=0\}$}.
		
		Now we {verify the} invariance of $\hat{J}$ before and after the reflection against the transformed reflection wall.
		Set 
		\[
		F: = \frac{(z_1^2 - z_2^2 -c_1)^2}{a^2} + \frac{(2z_1 z_2 -c_2)^2}{b^2}-1
		\]
		and define $F_1 := \partial F/ \partial z_1$ and $F_2 := \partial F / \partial z_2$.
		{The normal vector to the curve $\{F=0\}$ is} given by 
		\[
		n:= {( F_1, F_2 )}
		\]
		{and thus the} normal component of $w:= (w_1,w_2)$ is obtained as
		\[
		{w_{n}}:= \frac{w \cdot n}{|n|^2} n= \left( \frac{(w_1 F_1 + w_2 F_2 )F_1}{F_1^2 + F_2^2},  \frac{(w_1 F_1 + w_2 F_2 )F_2}{F_1^2 + F_2^2}   \right).
		\]
		From the law of elastic reflection, the momenta after the reflection $w' := (w'_1, w'_2)$ is described as 
		\[
		w' = w - 2 {w_{n}}.
		\]
		{The difference before and after the reflection is computed as}
		\[
		\hat{J}(z_1, z_2, w_1', w_2') - \hat{J}(z_1, z_2, w_1, w_2) =  D_1 {\cdot} D_2,
		\]
		where
		\[
		D_1:= (- z_1^2 + z_2^2 + z + c_1)(z_1^2 -z_2^2 + a - c_1)b^2 - (-2z_1 z_2+ c_2)^2 a^2,
		\]
		and $D_2$ is a polynomial of $z_1, z_2,w_1$, and $w_2$.
		Since $ F \cdot a^2 \cdot b^2 = -D_1$, the factor $D_1$ becomes $0$ {at} the reflection wall $\{F = 0\}$. Therefore, $\hat{J}$ is invariant under the reflection. This means the transformed first integral $\hat{J}$ is the first integral for the billiard system $\hat{H}=0$ with the transformed reflection wall {$\{F = 0\}$} on the zero-energy surface. 
	}
	
	\section{Invariance of Gallavotti-Jauslin's First Integral}
	\label{sec: G-J}
	{Here, we directly verify the invariance of Gallavotti-Jauslin's first integral which appeared in Lemma \ref{lem: Gallavotti-Jauslin} {of} the Kepler billiard with $s=1$ with a focused elliptic and a focused hyperbolic reflection wall.} 
	By ruling out the rotational symmetry, we can write a focused ellipse as
	\[
	\frac{(q_1 - \sqrt{a^2 -b^2})^2}{a^2} + \frac{q_2^2}{b^2} = 1.
	\]
	Set $F := \frac{(q_1 - \sqrt{a^2 -b^2})^2}{a^2} + \frac{q_2^2}{b^2}-1$ and define $F_1 := \partial F / \partial q_1$ and $F_2 := \partial F / \partial q_2$.
	Let $(p_1,p_2)$ and $(p_1',p_2')$ denote momenta, respectively, before and after the reflection against this focused conic section reflection wall. From the law of elastic reflection, we obtain
	\[
	p_1' = p_1 - \frac{2(p_1 F_1 + p_2F_2)F_1}{F_1^2 + F_2^2},
	\]
	\[
	p_2' =p_2 - \frac{2(p_1 F_1 + p_2F_2)F_2}{F_1^2 + F_2^2}.
	\]
	Now we {test the invariance of a first integral of the form}
	\begin{align*}
	A := (-p_1 q_2 + p_2 q_1)^2 + l_1\left((-p_1 q_2 + p_2 q_1)p1 + \frac{q2}{\sqrt{q_1^2 + q_2^2}}\right) \\
	+ l_2\left((-p_1 q_2 + p_2 q_1)p2 - \frac{q1}{\sqrt{q_1^2 + q_2^2}}\right)
	\end{align*}
	under the reflection against the reflection wall. The difference between the value of $A$ before and after the reflection is {computed as }
	\begin{align*}
	&A(q_1,q_2,p_1,p_2) - A(q_1,q_2,p_1',p_2')= \frac{-(F_1 p_2 - F_2 p_1)(F_1 p_1 + F_2 p_2)}{(F_1^2 + F_2^2)^2}\times\\
	&(F_1^2((l_1 - 2 q_2)q_1 - l_2 q_2) + 2 F_1 F_2(q_1^2 + l_2 q_1 + q_2(l_1 - q_2))-F_2^2 ((l_1 - 2 q_2 )q_1 - l_2 q_2)).
	\end{align*}
	Set
	\[
	G:= F_1^2((l_1 - 2 q_2)q_1 - l_2 q_2) + 2 F_1 F_2(q_1^2 + l_2 q_1 + q_2(l_1 - q_2))-F_2^2 ((l_1 - 2 q_2 )q_1 - l_2 q_2).
	\]
	When $l_1 = 0, l_2 = -2 \sqrt{a^2 - b^2 }$, $G$ becomes
	\[
	G = \frac{8 q_2(q_1 - \sqrt{a^2 - b^2})(a - b)(a + b)(q_2^2 a^2 - b^4 + b^2 q_1^2 -2 \sqrt{a^2 - b^2} b^2 q_1)}{a^4 b^4}
	\]
	{which is 0 at the reflection wall $\{F=0\}$. Note that $l_{2}=-2\tilde{a}$ as appeared in Lemma \ref{lem: Gallavotti-Jauslin} which in this case represents the center-focus distance of the ellipse under concern. }
	
	Analogously, we also get the integrability of focused hyperbolae reflection wall by setting $b$ as a purely imaginary number.
	
\end{quote}




\end{document}